\documentclass[11pt,leqno]{amsart}

\usepackage[a4paper, right=20mm, left=20mm, top=20mm]{geometry}

\usepackage{amsthm}     %
\usepackage[english]{babel} %
\usepackage[utf8]{inputenc} %
\usepackage[hidelinks]{hyperref}
\usepackage{amsmath}
\usepackage{amssymb}
\usepackage{mathtools}
\usepackage{mathrsfs}
\usepackage{enumerate}

\usepackage{zref-clever}
\newcommand\Cref[1]{\zcref[S]{#1}}

\usepackage{tikz}
\usetikzlibrary{matrix,arrows,cd}
\tikzcdset{diagrams={arrows={shorten >=-0.5ex,shorten <=-0.5ex},nodes={inner sep=0.4em},row sep=1em,column sep=1em}}

\usepackage{tikz}

\tikzset{
    partition/.style={
      scale=0.4,
      yscale=-1,
      baseline={([yshift=-0.5ex]current bounding box.center)}
    }
}

\tikzset{
    bend/.cd,
    0/.style={},
    1/.style={bend right},
    -1/.style={bend left}
}
\newcommand\makePartPt[1]{({Mod(#1,10)},{(#1-Mod(#1,10))*.1})}
\newcommand\makePartLn[2]{%
\pgfmathtruncatemacro\bend{%
(int(#1/10)==int(#2/10)) ?
(#1<10 ? 1 : -1)*(#1>#2 ? 1 : -1)
: 0%
}
\draw[draw=black,line width=0.5pt,line cap=round] ({mod(#1,10)},{int(#1/10}) to[bend/\bend] ({mod(#2,10)},{int(#2/10)});
}
\newcommand\tp[2][] {%
~\tikz[partition] {
\draw[white,opacity=0] (1,0)--(1,1); 
\def\j{0}
\foreach \i [remember=\i as \j] in {#2} {
  \ifnum \i>0
    \ifnum \j>0
      \makePartLn{\i}{\j};
    \fi
  \fi
} 
\foreach \i [remember=\i as \j] in {#2} {
  \ifnum \i>0
    \draw[draw=none,fill=black] \makePartPt\i circle (4pt);
  \fi
} 
{#1}
}~}

\newcommand\listorempty[1]{\def\temp{#1}\ifx\temp\empty \emptyset \else (#1) \fi}

\numberwithin{equation}{section}
\setcounter{tocdepth}{1}

\newtheorem{Theorem}{Theorem}[section]
\zcsetup{countertype={Theorem=theorem}}

\newtheorem{thmalph}{Theorem}

\zcsetup{countertype={thmalph=theorem}}

\newcommand\newtheoremx[3]{%
\AddToHook{env/#1/begin}{%
\zcsetup{countertype={Theorem=#2}}}
\newtheorem{#1}[Theorem]{#3}
}

\theoremstyle{plain}
\newtheoremx{Lemma}{lemma}{Lemma}
\newtheoremx{Proposition}{proposition}{Proposition}
\newtheoremx{Corollary}{corollary}{Corollary}

\theoremstyle{definition}
\newtheoremx{Definition}{definition}{Definition}
\newtheoremx{Remark}{remark}{Remark}
\newtheoremx{Example}{example}{Example}

\newcommand{\EatDot}[1]{}

\newcommand{\C}{\mathbb{C}}

\newcommand{\Q}{\mathbb{Q}}
\newcommand{\Z}{\mathbb{Z}}
\newcommand{\N}{\mathbb{N}}
\newcommand{\F}{\mathbb{F}}
\newcommand{\kk}{\Bbbk}

\newcommand{\id}{\mathrm{id}}

\newcommand{\Aut}{\mathrm{Aut}}
\newcommand{\Hom}{\mathrm{Hom}}
\newcommand{\Ext}{\mathrm{Ext}}

\newcommand{\Rep}{\mathrm{Rep}}
\newcommand{\Tilt}{\mathrm{Tilt}}
\newcommand{\Proj}{\mathrm{Proj}}
\newcommand{\GL}{\mathrm{GL}}

\newcommand{\PSh}{\mathrm{PSh}}
\newcommand{\Ind}{\mathrm{Ind}}

\newcommand\Kar{\operatorname{Kar}}
\newcommand\uPerm{\underline{\operatorname{Perm}}}
\newcommand\uRep{\underline{\mathrm{Re}}\mathrm{p}}
\newcommand\RepSt{\uRep(S_t)}

\DeclareMathOperator{\chark}{char}

\newcommand{\aff}[1]{\smash{\mathfrak{\widehat{#1}}}}
\newcommand{\kacmoody}[1]{\smash{\mathfrak{\widetilde{#1}}}}

\newcommand{\Tens}{\mathrm{Tens}}
\newcommand{\Vect}{\mathrm{Vect}}

\newcommand\op{\mathrm{op}}

\newcommand\cA{\mathcal A}
\newcommand\cC{\mathcal C}
\newcommand\cD{\mathcal D}
\newcommand\cM{\mathcal M}
\newcommand\cR{\mathcal R}
\newcommand\cS{\mathcal S}
\newcommand\cT{\mathcal T}
\newcommand\cU{\mathcal U}

\newcommand\im{\operatorname{im}}
\newcommand\coim{\operatorname{coim}}
\newcommand\Rel{\mathrm{Rel}}
\newcommand\End{\operatorname{End}}

\newcommand\one{\mathbf1}
\renewcommand\:{\colon}
\renewcommand\o{\otimes}
\newcommand\ti\widetilde

\title[Monoidal Ringel duality]{Monoidal Ringel duality\\and monoidal highest weight envelopes}
\author{Johannes Flake}
\author{Jonathan Gruber}
\date{\today}

\begin{document}

\begin{abstract}
	We show that a large class of non-abelian monoidal categories can be realized as subcategories of tilting objects in abelian monoidal categories with a highest weight structure.
	The construction relies on a monoidal enhancement of Brundan--Stroppel's semi-infinite Ringel duality and applies to many of Sam--Snowden's triangular categories and Knop's tensor envelopes of regular categories.
	We also explain how monoidal Ringel duality gives rise to monoidal structures on categories of representations of affine Lie algebras at positive levels.
\end{abstract}

\maketitle

\section*{Introduction}

Many categories of interest in representation theory come equipped with certain natural additional structures.
For instance, categories of a diagrammatic nature (such as partition categories or Brauer categories) are usually equipped with a monoidal structure (i.e.\ with a well-behaved tensor product).
Categories of representations of Lie theoretic objects (such as algebraic groups, quantum groups or affine Lie algebras) often admit a highest weight structure, as introduced by Cline--Parshall--Scott \cite{CPShighestweight}.
The goal of this article is to explain some natural interactions between monoidal structures and highest weight structures, namely how monoidal structures on highest weight categories behave under Brundan--Stroppel's semi-infinite Ringel duality \cite{BrundanStroppelSemiInfiniteHighestWeight}.
In short (and as explained in more detail in Theorems~\ref{thm:introlowertoupper} and \ref{thm:introuppertolower} below), we show that a monoidal structure on a highest weight category (subject to some compatibility conditions) gives rise to a canonical monoidal structure on the Ringel dual highest weight category.
Our two main applications of this ``monoidal Ringel duality'' are as follows.

\subsection*{Interpolation categories}

Since the seminal work of Deligne \cite{Deligne-RepSt}, there has been a lot of interest in studying \emph{interpolation categories}, that is, families of monoidal categories $\cA_t$ with a parameter $t$ that interpolate classical families of categories of representations, such as the complex representations of symmetric groups $S_n$ or general linear groups $\GL_n(\C)$ for different $n$.
More precisely, the categories $\cA_t$ are typically additive and semisimple at generic parameters, but at certain special parameters $t=n$, they are non-semisimple and admit monoidal quotient functors to classical categories of representations.

An important question that arises in this context, also going back to the work of Deligne \cite[Proposition~8.19]{Deligne-RepSt}, is whether at a special parameter $t=n$, the non-semisimple category $\cA_t$ can be embedded as a monoidal additive subcategory in some monoidal abelian category $\cC_t$.
If there is a universal monoidal abelian category $\cC_t$ with this property, then $\cC_t$ is called a \emph{monoidal abelian envelope} of $\cA_t$ \cite{CEH,CoulembierMonAbEnv,CEOPAbEnvQuotProp}.
For the interpolation categories $\cA_t = \uRep(S_t)$ corresponding to representations of symmetric groups, Deligne has constructed a suitable family of monoidal abelian categories $\cC_t = \uRep^\mathrm{ab}(S_t)$, which were further studied in some detail and shown to be monoidal abelian envelopes by Comes--Ostrik \cite{ComesOstrikBlocksRepSt,ComesOstrikRepabSd}.
Somewhat surprisingly, one finds that the category $\uRep^\mathrm{ab}(S_t)$ is a lower finite highest weight category in the sense of \cite{BrundanStroppelSemiInfiniteHighestWeight} (because all of its blocks are either semisimple or equivalent to the principal block of quantum $\mathfrak{sl}_2(\C)$ at a root of unity), and the additive subcategory $\uRep(S_t)$ identifies with the full subcategory of tilting objects in $\uRep^\mathrm{ab}(S_t)$.
Similar observations have subsequently been made for other families of interpolation categories (see \cite[Section 6.6]{BrundanStroppelSemiInfiniteHighestWeight}), but so far, there has not been a conceptual explanation for why interpolation categories should embed as categories of tilting objects in monoidal lower finite highest weight categories.
To provide such an explanation was our main motivation in writing this article, and we do so in \Cref{thm:introinterpolation} below.

While there is no general definition of interpolation categories to date, a large class of interpolation categories (including interpolation categories for representations of symmetric groups $S_n$ and finite general linear groups $\GL_n(\F_q)$) has been constructed uniformly by Knop \cite{KnopTensorEnvelopes}.
The construction starts from a regular category $\mathcal{R}$ and uses a calculus of relations and a degree function $\delta$ to define a rigid symmetric monoidal category $\cT(\mathcal{R},\delta)$, called the tensor envelope of $\mathcal{R}$ (see \Cref{sec:tensorenvelopes} below for more details).
We may regard the set of degree functions as a space of interpolation parameters $\delta$ for the family of monoidal categories $\cT(\mathcal{R},\delta)$.

\begin{thmalph}
\label{thm:introinterpolation}
    Let $\mathcal{R}$ be a subobject-finite exact regular Mal'cev category with a degree function $\delta$, taking values in an algebraically closed field of characteristic zero.
    Then there is a lower finite highest weight category $\cC$ with a rigid symmetric monoidal structure such that the full subcategory of tilting objects $\Tilt(\cC)$ is a monoidal subcategory of $\cC$ closed under taking duals and
    \[ \cT(\cR,\delta) \simeq \Tilt(\cC) \]
    as symmetric monoidal categories.
    If $\cT(\mathcal{R},\delta)$ admits a monoidal abelian envelope, then $\cC$ is a monoidal abelian envelope of $\cT(\cR,\delta)$. %
\end{thmalph}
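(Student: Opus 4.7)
The plan is to realize $\cT(\cR,\delta)$ as the tilting subcategory of a monoidal lower finite highest weight category via monoidal Ringel duality. Specifically, I would assemble a monoidal upper finite highest weight envelope on the ``projective side'' (the monoidal highest weight envelope referenced in the title of the paper) and then transport it through the upper-to-lower version of monoidal Ringel duality, Theorem~\ref{thm:introuppertolower}, to obtain the desired $\cC$.

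First, I would invoke Knop's structural results for $\cT(\cR,\delta)$ under the standing hypotheses (subobject-finite, exact, regular Mal'cev, characteristic zero). These classify the indecomposable objects of $\Kar(\cT(\cR,\delta))$ by a poset $\Lambda$ with highest weight flavour and give controlled formulas for tensor products of indecomposables in terms of $\Lambda$. This combinatorial input is what distinguishes $\cT(\cR,\delta)$ from a generic rigid symmetric monoidal category and makes highest weight theory applicable. In particular, the behaviour of indecomposables under duality should match the prospective tilting indexing.

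Second, I would construct a monoidal upper finite highest weight category $\cD$ whose category of projectives recovers $\cT(\cR,\delta)$: concretely, $\cD$ is built as an appropriate category of finite-dimensional modules over the endomorphism algebras of progenerators assembled from the indecomposables, with standard objects $\Delta(\lambda)$ ($\lambda\in\Lambda$) defined by quotienting out contributions from strictly higher weights, so that indecomposable projectives acquire the expected $\Delta$-filtrations and $\Proj(\cD)^{\op}\simeq\cT(\cR,\delta)$ as additive categories. Promoting the tensor product on $\cT(\cR,\delta)$ to a monoidal structure on $\cD$ that is compatible with the highest weight structure — i.e., verifying that tensoring with a projective preserves standardly filtered objects — is the technical core and relies on Knop's tensor product decomposition rules.

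Third, apply the monoidal upper-to-lower Ringel duality (Theorem~\ref{thm:introuppertolower}) to $\cD$ to produce a lower finite highest weight category $\cC$ with a rigid symmetric monoidal structure and a monoidal equivalence $\Tilt(\cC)\simeq\Proj(\cD)^{\op}\simeq\cT(\cR,\delta)$; closure of $\Tilt(\cC)$ under duals is inherited from rigidity of $\cT(\cR,\delta)$. For the final statement, if $\cT(\cR,\delta)$ admits a monoidal abelian envelope $\mathcal{E}$, the inclusion $\cT(\cR,\delta)\simeq\Tilt(\cC)\hookrightarrow\cC$ into the abelian monoidal $\cC$ induces by universality a monoidal exact functor $\mathcal{E}\to\cC$; one then checks it is an equivalence using the fact that in a lower finite highest weight category every object is a cokernel of a morphism between tilting objects, so $\cC$ is generated as an abelian monoidal category by $\cT(\cR,\delta)$ and satisfies the universal property of $\mathcal{E}$.

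\textbf{Main obstacle.} The technical heart of the argument is the second step: producing the monoidal upper finite highest weight envelope $\cD$ and checking that its tensor product respects the highest weight structure. This requires using Knop's combinatorics of $\cT(\cR,\delta)$ substantively beyond the classification of indecomposables, and it is where subobject-finiteness, the Mal'cev condition, and characteristic zero enter in an essential way (the last of these ensuring semisimplicity of the generic-parameter categories, which controls the $\Delta$-filtration multiplicities).
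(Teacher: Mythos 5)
Your high-level plan — embed $\cT(\cR,\delta)$ as the projectives of a monoidal upper finite highest weight category $\cD$ and transport to the tilting side via upper-to-lower monoidal Ringel duality — is the right shape, and it matches the paper's architecture. But the two central steps are not worked out in a way that would go through, and one is actually wrong.

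On the construction of $\cD$. You gesture at "Knop's structural results" providing a poset classifying indecomposables with "controlled formulas for tensor products of indecomposables", and then at building $\cD$ from endomorphism algebras of progenerators. No such ready-made combinatorial description of tensor products of indecomposables in $\Kar(\cT(\cR,\delta))$ exists, and the paper does not go this way. What the paper actually does is: (i) exhibit a \emph{monoidal triangular structure} $(\kk\cU_\Rel,\kk\cD_\Rel)$ on $\cT^0(\cR,\delta)$ in Sam--Snowden's sense, where $\cU_\Rel$ and $\cD_\Rel$ are the relations whose legs are (surjective, injective) respectively (injective, surjective); (ii) take $\cD = \PSh^\mathrm{fd}(\cT^0(\cR,\delta)^\op)$ and read the upper finite highest weight structure off the triangular data (\Cref{prop:triangularstructurehighestweightcategory}); (iii) check not only your condition \eqref{eq:Deltatensor} (tensor products of standardly filtered objects stay standardly filtered) but also the Tor-vanishing condition \eqref{eq:DeltaTor}, both via exactness of Day convolution on $\PSh(\kk\cD_\Rel^\op)$ (\Cref{prop:monoidaltriangularDeltatensorDeltator}). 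The crucial input is a family of factorization lemmas for relations (\Cref{lem:U-D-basics}, \Cref{lem:tri-decomposition}) proved through a theory of reduced $n$-ary relations; subobject-finiteness, exactness and Mal'cev enter there, not in a classification of indecomposables. You omit the Tor condition and supply no substitute for the $n$-ary relation calculus; without them, there is no route from "tensor preserves $\Delta$-filtrations" to an actual $t$-structure argument for the Ringel dual. You also do not address the order-preserving duality hypothesis needed in \Cref{thm:monoidalRingelduality_uppertolower}(3) to get that $\cC$ is rigid with $\Tilt(\cC)$ closed under duals; "inherited from rigidity of $\cT(\cR,\delta)$" is not automatic.

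On the abelian envelope. Your argument rests on the claim that "in a lower finite highest weight category every object is a cokernel of a morphism between tilting objects." That is false in general (e.g.\ in $\uRep^\mathrm{ab}(S_t)$ the tiltings are not projective, and objects outside $\cC_\Delta$ need not be cokernels of tilting maps). The paper's \Cref{thm:abelian-envelope} instead proves the result in two opposite directions: $F\colon\cT\to\cC$ from the universal property, and a quasi-inverse $G\colon\cC\to\cT$ built as $H^0\circ e_\mathrm{c}\circ\mathrm{tc}$, where $\mathrm{tc}$ is a monoidal "tilting complex" functor coming from the derived equivalence $K^b(\Tilt(\cC))\simeq D^b(\cC)$. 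Faithfulness of $F$, acyclicity of $e_\mathrm{c}\circ\mathrm{tc}$ away from degree $0$, and the Künneth formula are what make $G$ exact and monoidal; none of this is captured by the cokernel-of-tiltings heuristic.
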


In particular, the category $\cT(\cR,\delta)$ always admits a monoidal embedding into a monoidal abelian category, which was an open question (to the best of our knowledge) in the cases where $\cT(\cR,\delta)$ is non-semisimple.
\Cref{thm:introinterpolation} implies that whenever $\cT(\cR,\delta)$ admits a monoidal abelian envelope, the latter is a lower finite highest weight category such that $\cT(\cR,\delta)$ identifies with the full subcategory of tilting objects.
Notably, this explains the existence of highest weight structures on the monoidal abelian envelopes of the interpolation categories $\RepSt$ and $\uRep(\GL_t(\F_q))$. 
Using a comparison result due to Snowden \cite{Snow-regular}, it also follows that, under certain additional assumptions,
the highest weight category $\cC$ from \Cref{thm:introinterpolation} is simultaneously a monoidal abelian envelope of $\cT(\cR,\delta)$ and of a category of ``permutation modules'' $\uPerm(G,\mu)$ constructed from a pro-oligomorphic group $G$ (associated with $\cR$) and a measure $\mu$ (associated with $\delta)$ as in \cite{HS-oligo}, see \Cref{prop:comparison-Knop-HS}.
This abelian envelope was constructed using different methods in \cite{HS-oligo}.

A brief (and not entirely accurate) outline of the proof of \Cref{thm:introinterpolation} is as follows:
We first show that Knop's tensor envelopes fit into Sam--Snowden's framework of (monoidal) triangular categories \cite{SamSnowdenTriangular} and deduce that the category of presheaves $\mathcal{D} = \PSh\big( \cT(\cR,\delta) \big)$ (i.e.\ the category of contravariant functors from $\cT(\cR,\delta)$ to vector spaces) is an upper finite highest weight category in the sense of \cite{BrundanStroppelSemiInfiniteHighestWeight} (see \Cref{prop:triangularstructurehighestweightcategory} and Corollary \ref{prop:knop-monoidal-triangular}).
Furthermore, $\cT(\cR,\delta)$ is equivalent to the category of finitely generated projective objects in $\mathcal{D}$, and the Day convolution tensor product (see Subsection~\ref{subsec:functorcategories}) gives rise to a monoidal structure on $\mathcal{D}$.
Then, using monoidal Ringel duality (\Cref{thm:introuppertolower} below), we see that the Ringel dual $\cC = \prescript{\vee}{}{\mathcal{D}}$ is a lower finite highest weight category with a monoidal structure such that $\Tilt(\cC)$ is monoidally equivalent to $\cT(\cR,\delta)$, since Ringel duality interchanges finitely generated projective objects and tilting objects.
For more details, see \Cref{thm:knop-highest-weight-envelope} and \Cref{cor:knop-mon-ab-env}.

\subsection*{Affine Lie algebras at positive levels}

Let $\mathfrak{g}$ be a complex simple Lie algebra and let
\[ \aff{g} = \mathfrak{g} \otimes \C[t,t^{-1}] \, \oplus \, \C c \]
be the corresponding affine Lie algebra.
A $\aff{g}$-module $M$ is said to be of \emph{level} $\kappa \in \C$ if the central element $c$ of $\aff{g}$ acts on $M$ via multiplication with the scalar $\kappa - h^\vee$, where $h^\vee$ denotes the dual Coxeter number of $\mathfrak{g}$.
Let $\mathcal{O}_\kappa$ be the parabolic BGG category of $\aff{g}$-modules of level $\kappa$ on which the parabolic subalgebra $\mathfrak{g} \otimes \C[t] \oplus \C c$ acts locally finitely and on which $\mathfrak{g} \otimes t \C[t]$ acts locally nilpotently.
Further let $U_\zeta = U_\zeta(\mathfrak{g})$ be the quantum group (with divided powers) corresponding to $\mathfrak{g}$, with quantum parameter specialized to $\zeta \in \C$, and let $\Rep(U_\zeta)$ be the category of finite-dimensional $U_\zeta$-modules with a weight space decomposition.

For a negative or non-rational level $\kappa$, Kazhdan--Lusztig have defined a braided monoidal structure on the full subcategory $\mathcal{O}_\kappa^\mathrm{fl}$ of $\aff{g}$-modules of finite length in $\mathcal{O}_\kappa$ \cite{KL12,KL34}.
For $\zeta = \exp(\frac{\mathrm{i} \pi}{D\kappa})$ (where $D$ is the lacing number of $\mathfrak{g}$), they have further constructed a braided monoidal functor
\[ F_\kappa \colon \mathcal{O}_\kappa^\mathrm{fl} \longrightarrow \Rep(U_\zeta) , \]
which is an equivalence under mild restrictions on $\kappa$.
We say that $\kappa$ is \emph{KL-good} if $F_\kappa$ is an equivalence.

For a positive level $\kappa \in \Q_{>0}$, the questions whether there is a canonical monoidal structure on $\mathcal{O}_\kappa$ and how $\mathcal{O}_\kappa$ is related to representations of quantum groups have been studied in detail for $\mathfrak{g} = \mathfrak{sl}_2(\C)$ by McRae--Yang \cite{McRaeYang}, but little is known in the general case.
Using monoidal Ringel duality, we establish the following generalization of the results of McRae--Yang.

\begin{thmalph}
\label{thm:intropositivelevel}
    Let $\kappa \in \Q_{>0}$ and $\zeta = \exp(-\frac{\pi \mathrm{i}}{D \kappa})$.
    If $-\kappa$ is KL-good, then there is a braided monoidal structure on $\mathcal{O}_\kappa$ and an exact essentially surjective braided monoidal functor
    \[ G_\kappa \colon \mathcal{O}_\kappa \longrightarrow \Ind \, \Rep(U_\zeta) . \]
    Furthermore, the functor $G_\kappa$ admits a fully faithful right adjoint functor, which exhibits $\Ind \, \Rep(U_\zeta)$ as a reflective subcategory of $\mathcal{O}_\kappa$.
\end{thmalph}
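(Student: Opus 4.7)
The plan is to realize $\mathcal{O}_\kappa$ as the monoidal Ringel dual of $\Ind \, \Rep(U_\zeta)$ and to transport the structure through \Cref{thm:introuppertolower}.

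\emph{Step 1.} First I would verify that $\cD := \Ind \, \Rep(U_\zeta)$ is a braided monoidal upper finite highest weight category in the sense used in the paper: the weight poset is $X^+$ with the root order, the standard objects are the quantum Weyl modules $V(\lambda)$, finitely generated projective covers of the simples exist after passing to the $\Ind$-completion, and the indecomposable tilting modules are closed under tensor products (Andersen's tensor product theorem for quantum groups at roots of unity). The $R$-matrix provides the braiding. This supplies the compatibility conditions needed to invoke monoidal Ringel duality.

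\emph{Step 2.} Applying \Cref{thm:introuppertolower} to $\cD$ then produces a lower finite highest weight category $\cC := \prescript{\vee}{}{\cD}$ equipped with a canonical braided monoidal structure, together with a braided monoidal equivalence $\Tilt(\cC) \simeq \Proj_{\mathrm{fg}}(\cD)$. The Ringel duality construction also yields an exact essentially surjective braided monoidal functor $\cC \to \cD$ whose right adjoint is the fully faithful inclusion of $\cD$ into $\cC$ as a reflective subcategory.

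\emph{Step 3.} The central step is to identify $\cC \simeq \mathcal{O}_\kappa$ as lower finite highest weight categories. The category $\mathcal{O}_\kappa$ at positive rational level is a lower finite highest weight category whose standard objects are the parabolic Verma modules and whose indecomposable tiltings are the $T_\kappa(\lambda)$. By the KL-good assumption the Kazhdan--Lusztig functor $F_{-\kappa} \colon \mathcal{O}_{-\kappa}^{\mathrm{fl}} \xrightarrow{\sim} \Rep(U_\zeta)$ is a braided monoidal equivalence. Combining this with the tilting--projective duality between positive-level and negative-level $\mathcal{O}$ (indecomposable tiltings of $\mathcal{O}_\kappa$ correspond to indecomposable projective covers in the completed $\mathcal{O}_{-\kappa}$) yields a monoidal equivalence $\Tilt(\mathcal{O}_\kappa) \simeq \Proj_{\mathrm{fg}}(\cD)$. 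By the uniqueness of a lower finite highest weight envelope of a given monoidal tilting subcategory, this extends to a highest-weight monoidal equivalence $\mathcal{O}_\kappa \simeq \cC$. Pulling back the braided monoidal structure from $\cC$ and defining $G_\kappa$ as the composite $\mathcal{O}_\kappa \simeq \cC \to \cD = \Ind \, \Rep(U_\zeta)$ then produces the required functor, with exactness, essential surjectivity and the fully faithful right adjoint inherited from the Ringel quotient in Step 2.

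The hard part will be Step 3, specifically the identification of $\Tilt(\mathcal{O}_\kappa)$ with $\Proj_{\mathrm{fg}}(\Ind \, \Rep(U_\zeta))$ as a braided monoidal subcategory. For $\mathfrak{g} = \mathfrak{sl}_2(\mathbb{C})$ this is essentially the content of McRae--Yang; in general it requires an affine Soergel-type tilting/projective duality together with careful handling of $\rho$-shifts and singular blocks. The monoidality of this matching is the most delicate ingredient, and is precisely where \Cref{thm:introuppertolower} provides the crucial leverage, reducing the monoidality of $G_\kappa$ to a statement about tilting objects that can in principle be checked block-by-block.
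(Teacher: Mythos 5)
The proposal has the Ringel duality running in the wrong direction, which is a fatal flaw. Your Step~1 tries to exhibit $\cD = \Ind \Rep(U_\zeta)$ as the cocompletion of an \emph{upper} finite highest weight category, and your Step~3 asserts that $\mathcal{O}_\kappa$ at positive level is \emph{lower} finite; both claims are backwards. In fact $\Rep(U_\zeta) \simeq \mathcal{O}_{-\kappa}^\mathrm{fl}$ is a \emph{lower} finite highest weight category with weight poset $(X^+,\leq)$ (the Weyl module $\Delta_\zeta(\lambda)$ has composition factors $L_\zeta(\mu)$ only for $\mu \leq \lambda$, and there are finitely many such $\mu$), while for $\kappa>0$ it is $\mathcal{O}_\kappa^\mathrm{fm}$ that is \emph{upper} finite with weight poset $(X^+,\leq^\mathrm{op})$. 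Consequently, \Cref{thm:introuppertolower} (upper to lower) does not apply in your setup at all; the relevant tool is \Cref{thm:introlowertoupper} (lower to upper), applied to $\cC := \mathcal{O}_{-\kappa}^\mathrm{fl} \simeq \Rep(U_\zeta)$, which is rigid with $\Tilt(\cC)$ a monoidal subcategory (here the KL-good hypothesis enters). Its Ringel dual $\cC^\vee$ is identified with $\mathcal{O}_\kappa^\mathrm{fm}$ by the Arkhipov--Soergel duality (\Cref{thm:ArkhipovSoergel}), which supplies $\cC^{\vee,\mathrm{cc}} \simeq \mathcal{O}_\kappa$ with a braided monoidal structure, and $G_\kappa = L_\cC$ is the opposite Ringel duality functor. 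This also disposes of the hesitation in your final paragraph about monoidality of the tilting--projective matching: it is an output of \Cref{thm:monoidalRingelduality_lowertoupper}, not something needing a separate block-by-block verification.

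A second, independent gap: you assert in Step~2 that ``the Ringel duality construction also yields'' exactness of $G_\kappa$, essential surjectivity, and a fully faithful right adjoint. None of these follow formally from either direction of monoidal Ringel duality; they are additional claims requiring specific input about $\Rep(U_\zeta)$. The paper establishes the fully faithfulness of the right adjoint $R_\cC$ (hence reflectivity) via \Cref{prop:RingelDualReflective}, whose hypothesis is that $\cC = \Rep(U_\zeta)$ has enough projectives and every projective is tilting. Exactness of $G_\kappa = L_\cC$ is then deduced separately, using that injectives in $\cC^\vee$ are tilting (which follows from the projectives-are-tilting fact combined with the BGG-type duality of \Cref{rem:Chevalleydualitylowertoupper}); essential surjectivity follows from the counit $G_\kappa G'_\kappa \cong \id$ of a reflective adjunction. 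Your proposal skips all of this, treating these as formal consequences of the duality, which they are not.
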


The proof of \Cref{thm:intropositivelevel} relies on the fact (due to Arkhipov \cite{ArkhipovSemiInfiniteAssociative} and Soergel \cite{SoergelCharakterformeln}) that for a positive level $\kappa \in \Q_{>0}$, the category $\mathcal{O}_\kappa$ is an upper finite highest weight category whose Ringel dual is the lower finite highest weight category $\mathcal{O}_{-\kappa}$.
Therefore, monoidal Ringel duality (\Cref{thm:introlowertoupper} below) allows us to construct a monoidal structure on $\mathcal{O}_\kappa$ from the monoidal structure on $\mathcal{O}_{-\kappa}^\mathrm{fl} \simeq \Rep(U_\zeta)$.
For more details we refer the reader to \Cref{thm:positivelevelmonoidalstructure}.

\subsection*{Monoidal Ringel duality}

As explained before, the main tool that we use to establish Theorems~\ref{thm:introinterpolation} and \ref{thm:intropositivelevel} is \emph{monoidal Ringel duality}, that is, a monoidal enhancement of Brundan--Stroppel's semi-infinite Ringel duality \cite{BrundanStroppelSemiInfiniteHighestWeight}.
In order to discuss this formalism, we first need to introduce some additional terminology.
A highest weight category is (loosely speaking) an abelian category $\cC$ with a collection of standard objects $\Delta(\lambda)$, labeled by a poset $(\Lambda,\leq)$, such that every standard object has a unique simple quotient $L(\lambda)$ and the simple objects $L(\lambda)$ with $\lambda \in \Lambda$ form a set of representatives for the isomorphism classes of simple objects in $\cC$.
There is also a dual notion of costandard objects $\nabla(\lambda)$ for $\lambda \in \Lambda$, and an object of $\cC$ is called a tilting object if it admits two filtrations whose subquotients are all isomorphic to standard objects, or to costandard objects, respectively.

The characteristics of a highest weight category $\cC$ strongly depend on the structure of its weight poset $(\Lambda,\leq)$.
For instance, if $(\Lambda,\leq)$ is upper finite (that is, for any $\lambda \in \Lambda$, there are finitely many $\mu \in \Lambda$ with $\lambda \leq \mu$), then $\cC$ has enough projective objects, but may fail to have tilting objects.
In contrast, if $(\Lambda,\leq)$ is lower finite, then for all $\lambda \in \Lambda$, there is an indecomposable tilting object $T(\lambda)$ of highest weight $\lambda$, but $\cC$ may fail to have projective objects.
This situation is at least partly remedied by the fact (due to Ringel \cite{RingelAlmostSplit} for finite weight posets and generalized to infinite weight posets by Brundan--Stroppel \cite{BrundanStroppelSemiInfiniteHighestWeight}) that there is a bijective correspondence (called Ringel duality) between lower finite highest weight categories and upper finite highest weight categories, interchanging tilting objects and projective objects.
More precisely, for a lower finite highest weight category $\cC$ with weight poset $(\Lambda,\leq)$, there is an upper finite highest weight category $\cC^\vee$ with the opposite weight poset $(\Lambda,\leq^\mathrm{op})$ and a so-called Ringel duality functor
\[ R = R_\cC \colon \cC \longrightarrow \cC^\vee \]
that restricts to an equivalence between the full subcategory $\Tilt(\cC)$ of tilting objects in $\cC$ and the full subcategory $\Proj^\mathrm{fg}(\cC^\vee)$ of finitely generated projective objects in $\mathcal{D}$.
Furthermore, the Ringel duality functor extends to a functor $R \colon \cC^\mathrm{cc} \to \cC^{\vee,\mathrm{cc}}$ between suitable cocomplete versions of the categories $\cC$ and $\cC^\vee$, and the latter admits a left adjoint
\[ L = L_\cC \colon \cC^{\vee,\mathrm{cc}} \longrightarrow \cC^\mathrm{cc} , \]
which we call the opposite Ringel duality functor.
Finally, Ringel duality is a bijective correspondence, in that for every upper finite highest weight category, there is a (unique up to equivalence) lower finite highest weight category $\prescript{\vee}{}{\mathcal{D}}$ such that $\prescript{\vee}{}{(\cC^\vee)} \simeq \cC$ and $(\prescript{\vee}{}{\mathcal{D}})^\vee \simeq \mathcal{D}$.

Our monoidal enhancement of Ringel duality comes in two parts, the first of which consists of explaining how a monoidal structure on a lower finite highest weight category gives rise to a monoidal structure on the (cocompleted) Ringel dual upper finite highest weight category.

\begin{thmalph}
\label{thm:introlowertoupper}
    Let $\cC$ be a lower finite highest weight category with a monoidal structure $(\otimes,\mathbf{1},\ldots)$ such that $\otimes$ is right exact in both arguments and $\Tilt(\cC)$ is a monoidal subcategory.
    Then there is a unique (up to monoidal equivalence) monoidal structure $(\otimes^\vee,\mathbf{1}^\vee,\ldots)$ on the (cocompleted) Ringel dual $\cC^{\vee,\mathrm{cc}}$ such that $\otimes^\vee$ is cocontinuous in both arguments and the Ringel duality functor
    \[ R \colon \cC \longrightarrow \cC^\vee \]
    restricts to a monoidal equivalence $\Tilt(\cC) \simeq \Proj^\mathrm{fg}(\cC^\vee)$.
    Moreover, the opposite Ringel duality functor
    \[ L \colon \cC^{\vee,\mathrm{cc}} \longrightarrow \cC^\mathrm{cc} \]
    can be enhanced to a monoidal functor.
\end{thmalph}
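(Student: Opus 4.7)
The plan is to exploit the fact that Ringel duality restricts to an equivalence $R\colon \Tilt(\cC) \xrightarrow{\sim} \Proj^\mathrm{fg}(\cC^\vee)$, combined with the expectation that the cocomplete Ringel dual $\cC^{\vee,\mathrm{cc}}$ is (equivalent to) the free cocompletion of $\Proj^\mathrm{fg}(\cC^\vee)$ as a linear category. First I would transport the monoidal structure from $\Tilt(\cC)$ (which is monoidal by hypothesis) to $\Proj^\mathrm{fg}(\cC^\vee)$ along $R$: fixing a quasi-inverse $S$ of the tilting--projective equivalence, set
\[ P \otimes^\vee Q := R\bigl(S(P) \otimes S(Q)\bigr) \qquad \text{and} \qquad \mathbf{1}^\vee := R(\mathbf{1}), \]
with associator and unitors pulled back via $R$ and $S$. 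By construction, the restriction of $R$ to tilting objects is then a monoidal equivalence $\Tilt(\cC) \simeq \Proj^\mathrm{fg}(\cC^\vee)$; note also that $\mathbf{1}^\vee \in \Proj^\mathrm{fg}(\cC^\vee)$ because $\mathbf{1} \in \Tilt(\cC)$ by the monoidality of the tilting subcategory.

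\textbf{Extension via Day convolution.} Next, extend $\otimes^\vee$ cocontinuously to $\cC^{\vee,\mathrm{cc}}$ using the Day convolution formula: writing $X = \operatorname{colim}_i P_i$ and $Y = \operatorname{colim}_j Q_j$ with $P_i, Q_j \in \Proj^\mathrm{fg}(\cC^\vee)$, declare $X \otimes^\vee Y := \operatorname{colim}_{i,j}(P_i \otimes^\vee Q_j)$. The classical theory of Day convolution on free cocompletions then furnishes the associator, unit constraints, and higher coherences, and automatically yields cocontinuity of $\otimes^\vee$ in both variables. Uniqueness of such a cocontinuous monoidal structure (subject to restricting, on the generating subcategory $\Proj^\mathrm{fg}(\cC^\vee)$, to the transported one) is then a consequence of the universal property of the free cocompletion together with the coend description of the Day tensor product.

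\textbf{Monoidality of $L$ and the main obstacle.} Finally, the opposite Ringel duality functor $L\colon \cC^{\vee,\mathrm{cc}} \to \cC^\mathrm{cc}$ is cocontinuous as a left adjoint, and its restriction to $\Proj^\mathrm{fg}(\cC^\vee)$ agrees, up to canonical isomorphism, with the quasi-inverse $S$ of $R$ on tilting objects (this uses the adjunction $L \dashv R$ together with the fact that $R$ restricts to an equivalence on the relevant subcategories). Hence $L$ is monoidal on $\Proj^\mathrm{fg}(\cC^\vee)$ by construction, and extends uniquely by cocontinuity to a monoidal functor on all of $\cC^{\vee,\mathrm{cc}}$, with coherence hexagons inherited from those already verified on the generating subcategory. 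I expect the main technical obstacle to be pinning down the identification of $\cC^{\vee,\mathrm{cc}}$ (in the sense of Brundan--Stroppel) with the free cocompletion of $\Proj^\mathrm{fg}(\cC^\vee)$ in a form strong enough that the Day convolution machinery applies cleanly; a secondary subtlety is that $\otimes$ on $\cC$ is only assumed right exact (rather than bi-exact), so one must check that the colimit-based definition of $\otimes^\vee$ is insensitive to the chosen colimit presentations and remains coherent under this weaker hypothesis.
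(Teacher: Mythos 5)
Your approach is essentially the paper's: identify $\cC^{\vee,\mathrm{cc}}$ with $\PSh(\Tilt(\cC))$ (equivalently, with the free cocompletion of $\Proj^\mathrm{fg}(\cC^\vee)$), equip it with the Day convolution monoidal structure, and invoke Im--Kelly's universal property of the free monoidal cocompletion to obtain both the uniqueness claim and the monoidal enhancement of $L$, which is the Yoneda extension of the embedding $\Tilt(\cC)\hookrightarrow\cC^\mathrm{cc}$. The identification you flag as the main obstacle is exactly \Cref{prop:Ringeldualpresheaves} in the paper (an application of Freyd's recognition of presheaf categories as cocomplete abelian categories generated by compact projectives), and your secondary worry about mere right exactness of $\otimes$ is slightly misplaced: Day convolution on $\PSh(\Tilt(\cC))$ uses only the monoidal structure on the small category $\Tilt(\cC)$, while right exactness of $\otimes$ enters precisely once, to guarantee that $\cC^\mathrm{cc}$ is monoidally cocomplete so the universal property yields the monoidal structure on $L$.
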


\Cref{thm:introlowertoupper} is essentially proven by identifying $\cC^{\vee,\mathrm{cc}}$ with the category of presheaves on $\Tilt(\cC)$, which can be equipped with the Day convolution monoidal structure \cite{DayConvolution} using the monoidal structure on $\Tilt(\cC)$.
The fact that $L$ can be enhanced to a monoidal functor then follows from the universal property of Day convolution established by Im--Kelly \cite{ImKellyConvolution}.
For the precise statement and additional details, we refer the reader to \Cref{thm:monoidalRingelduality_lowertoupper}.

To prove a converse of \Cref{thm:introlowertoupper} (that is, to define a monoidal structure on a lower finite highest weight category given a monoidal structure on its Ringel dual) is a more difficult task.
For an upper finite highest weight category $\mathcal{D}$ and for $\cA = \Proj^\mathrm{fg}(\mathcal{D})$, our strategy here relies on the fact that the Ringel dual $\prescript{\vee}{}{\mathcal{D}}$ can be realized as the heart of a $t$-structure on the homotopy category $K^b(\cA)$ (see Subsection~\ref{subsec:Ringeldualityexceptionalsequences} below).
If $\mathcal{D}^\mathrm{cc}$ admits a monoidal structure such that $\cA$ is a monoidal subcategory, then there also is a canonical monoidal structure on $K^b( \cA )$.
Under some additional hypotheses, the aforementioned $t$-structure is compatible with this monoidal structure, so that we obtain a monoidal structure on the heart $\prescript{\vee}{}{\mathcal{D}}$.

\begin{thmalph}
\label{thm:introuppertolower}
    Let $\mathcal{D}$ be an upper finite highest weight category with a monoidal structure $(\otimes,\mathbf{1},\ldots)$ on $\mathcal{D}^\mathrm{cc}$ such that $\otimes$ is cocontinuous in both arguments and $\Proj^\mathrm{fg}(\mathcal{D})$ is a monoidal subcategory.
    Further suppose that $\mathcal{D}$ satisfies the condition \eqref{eq:Xtensor}.
    Then the Ringel dual $\cC = \prescript{\vee}{}{\mathcal{D}}$ admits a canonical monoidal structure $(\prescript{\vee}{}{\otimes},\prescript{\vee}{}{\mathbf{1}},\ldots)$ such that $\prescript{\vee}{}{\otimes}$ is right exact in both arguments, $\Tilt(\cC)$ is a monoidal subcategory and $\Tilt(\cC)$ is monoidally equivalent to $\Proj^\mathrm{fg}(\mathcal{D})$.
\end{thmalph}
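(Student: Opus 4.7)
The plan is to transfer the monoidal structure from $\cA = \Proj^{\mathrm{fg}}(\mathcal{D})$ through the bounded homotopy category $K^b(\cA)$ down to the heart $\cC = \prescript{\vee}{}{\mathcal{D}}$ of the $t$-structure recalled in Subsection~\ref{subsec:Ringeldualityexceptionalsequences}, exactly as sketched in the paragraph preceding the theorem. First, since $\cA$ is by hypothesis a monoidal subcategory of $\mathcal{D}^{\mathrm{cc}}$, the category of bounded complexes inherits a monoidal structure via the usual total-complex tensor product, with unit $\mathbf{1}$ concentrated in degree $0$. This descends to a triangulated monoidal structure $\otimes^{K^b}$ on $K^b(\cA)$ that is exact in each variable and restricts to the original $\otimes$ on the full subcategory $\cA$.

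Next comes the core of the argument: I must show that $\otimes^{K^b}$ is (right) $t$-exact with respect to the $t$-structure whose heart is $\cC$, so that it descends to a monoidal structure on $\cC$ via
\[ X \mathrel{\prescript{\vee}{}{\otimes}} Y := H^0(X \otimes^{K^b} Y), \qquad \prescript{\vee}{}{\mathbf{1}} := H^0(\mathbf{1}_{K^b}). \]
By exactness of $\otimes^{K^b}$ in each variable and closure of the aisle under extensions and cones, it suffices to verify $t$-exactness on a set of generators. Tensor products of tilting objects correspond, under the Ringel equivalence $\Tilt(\cC) \simeq \cA$, to ordinary tensor products in $\cA$, which sit in the heart in degree $0$. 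One then extends to standard objects via their tilting resolutions, and this is precisely where hypothesis \eqref{eq:Xtensor} is used: it should encode a filtration/vanishing statement for tensor products of the relevant exceptional complexes that forces the negative cohomology of such tensor products to vanish, yielding the required $t$-exactness.

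With $t$-exactness in hand, I would transport the associator and unit constraints from $K^b(\cA)$ to $\cC$ along $H^0$, which becomes a strong monoidal functor when restricted to the subdiagram of tensor products of heart objects; the braiding or symmetry, if present on $\mathcal{D}^{\mathrm{cc}}$, transfers by the same mechanism. Right exactness of $\prescript{\vee}{}{\otimes}$ in each argument is automatic from right exactness of $H^0$ combined with biexactness of $\otimes^{K^b}$. By construction, the restriction of $\prescript{\vee}{}{\otimes}$ to $\Tilt(\cC)$ is computed in $\cA$ and hence coincides with $\otimes$ transported along the Ringel equivalence, identifying $\Tilt(\cC)$ with $\Proj^{\mathrm{fg}}(\mathcal{D})$ as monoidal categories and delivering all three features asserted in the theorem. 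Uniqueness of the monoidal structure (up to equivalence) follows from the fact that any right-exact monoidal structure on $\cC$ whose restriction to $\Tilt(\cC)$ agrees with the given one is determined by its values on a set of generators.

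The main obstacle is the $t$-exactness check powered by \eqref{eq:Xtensor}. The difficulty is that standard and costandard objects of $\cC$ generally have unbounded tilting resolutions, so controlling the negative cohomology of, say, $\Delta(\lambda) \otimes^{K^b} \Delta(\mu)$ requires careful bookkeeping on filtrations together with the precise combinatorial content of \eqref{eq:Xtensor} guaranteeing that the resulting complexes admit standard filtrations concentrated in non-negative degrees. Everything else — existence of the $t$-structure, coherence of the induced monoidal data, and the identification of $\Tilt(\cC)$ as a monoidal subcategory — is formal transport of structure through Brundan--Stroppel's Ringel correspondence and the universal property of the homotopy category.
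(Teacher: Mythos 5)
Your overall architecture matches the paper's proof exactly: realize $\cC = \prescript{\vee}{}{\mathcal{D}}$ as the heart of the $t$-structure on $\cT = K^b(\cA)$ from \Cref{thm:Ringeldualityexceptioncollection}, equip $\cT$ with the total-complex tensor product $\otimes_\mathrm{c}$, use \eqref{eq:Xtensor} to descend to the heart, and observe that tilting objects correspond to degree-zero complexes on which $\otimes_\mathrm{c}$ restricts to the given $\otimes_\cA$. This is precisely what the paper does, citing Bondarko--Deglise for the general principle that a monoidal structure on the aisle $\cT^{\leq 0}$ induces one on the heart via truncation.

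However, your description of what \eqref{eq:Xtensor} achieves is the wrong way around, and as stated it would prove the opposite exactness. You write that \eqref{eq:Xtensor} ``forces the negative cohomology of such tensor products to vanish, yielding the required $t$-exactness.'' Vanishing of $H^i$ for $i<0$ would place $X \otimes_\mathrm{c} Y$ in the co-aisle $\cT^{\geq 0}$; that is \emph{left} $t$-exactness, and after truncating by $\tau_{\leq 0}$ you would obtain a \emph{left} exact tensor product, contradicting the theorem's claim of right exactness (and contradicting your own earlier phrase ``(right) $t$-exact''). What \eqref{eq:Xtensor} actually says is that the aisle $\cT^{\leq 0} = \langle X_\lambda[d] \mid d\geq 0 \rangle_\mathrm{ext}$ is a monoidal subcategory of $\cT$, so that for $X,Y$ in the heart, $X \otimes_\mathrm{c} Y$ lies in $\cT^{\leq 0}$ --- it is the \emph{positive} cohomology $H^{>0}$ of the tensor product that vanishes. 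The correct tensor on $\cC$ is $\tau_{\geq 0}(X \otimes_\mathrm{c} Y)$, which equals $H^0(X\otimes_\mathrm{c} Y)$ precisely because $X\otimes_\mathrm{c} Y \in \cT^{\leq 0}$, and $\tau_{\geq 0}|_{\cT^{\leq 0}}$ is right exact. Relatedly, your assertion that ``$H^0$ is right exact'' is not true of $H^0 : \cT \to \cC$ in general; $H^0$ is a cohomological functor that is neither left nor right exact globally, and its right exactness is only available after \eqref{eq:Xtensor} has placed the relevant objects in $\cT^{\leq 0}$. A smaller point: you worry that standard and costandard objects of $\cC$ have unbounded tilting resolutions, but in the upper finite setting the $Y_\lambda$ come from the \emph{finite} projective resolutions of \Cref{lem:projectiveresolutionDelta}, and the $X_\lambda$ from bounded mutations, so everything lives in $K^b(\cA)$ and no unboundedness issues arise. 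Once the direction of $t$-exactness is corrected, the rest of your outline goes through as written.
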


For the precise statement and additional details, we refer the reader to \Cref{thm:monoidalRingelduality_uppertolower}.
Notably, under additional hypotheses on the monoidal structure $(\otimes,\mathbf{1},\ldots)$ on $\mathcal{D}^\mathrm{cc}$, we establish there variants of \Cref{thm:introuppertolower} that allow us to conclude that $\cC = \prescript{\vee}{}{\mathcal{D}}$ is braided, symmetric or rigid.

\subsection*{Structure of the article}

We conclude the introduction by giving a brief outline of the structure of the article.
In \Cref{sec:tensorcategories}, we discuss preliminaries on monoidal categories and on how monoidal structures interact with some elementary categorical constructions (such as Karoubi envelopes, homotopy categories and presheaf categories).
In \Cref{sec:highestweightcategories}, we recall Brundan--Stroppel's formalism of semi-infinite highest weight categories and semi-infinite Ringel duality, and we slightly reformulate semi-infinite Ringel duality in a way that is more well-suited to our monoidal Ringel duality.
The main results about monoidal Ringel duality are established in \Cref{sec:monoidalRingelduality}.
In \Cref{sec:triangularcategories}, we discuss Sam--Snowden's framework of monoidal triangular categories in relation to our monoidal Ringel duality, and \Cref{sec:tensorenvelopes} establishes that Knop's tensor envelops fit into said framework, thus establishing our main results about interpolation categories.
The final \Cref{sec:affineLiealgebras} is devoted to our results about tensor structures at positive levels for affine Lie algebras.
In \Cref{sec:relations}, we prove some results needed in \Cref{sec:tensorenvelopes} using a calculus of ($n$-ary) relations.

\subsection*{Acknowledgements}

The authors would like to thank Thorsten Heidersdorf, Robert McRae and Catharina Stroppel for helpful conversations, and Kevin Coulembier and Andrew Snowden for helpful conversations and valuable comments on earlier versions of this paper.
J.G.\ has received financial support from the SNSF via grant P500PT\_206751 and from the DFG via project 531430348.

\section{Monoidal categories}
\label{sec:tensorcategories}

In this section, we briefly recall the notions of monoidal categories, braidings and rigidity, and we discuss how these concepts interact with other categorical constructions (Karoubi envelopes, homotopy categories, presheaf categories, ind-completions) that will be used in the following sections.

\subsection{Monoidal categories}

Let us fix a field $\kk$ and write $\Vect_\kk$ for the category of $\kk$-vector spaces and $\Vect_\kk^\mathrm{fd}$ for the full subcategory of finite-dimensional $\kk$-vector spaces.
A \emph{$\kk$-linear category} is the same as a \emph{$\Vect_\kk$-enriched category} in the sense of \cite[Section 1.2]{KellyEnriched}, that is, a category where all $\Hom$-sets are $\kk$-vector spaces and where composition of homomorphisms is $\kk$-bilinear.
For $\kk$-linear categories $\cA$ and $\mathcal{B}$, a functor $F \colon \cA \to \mathcal{B}$ is called $\kk$-linear (or $\Vect_\kk$-enriched) if it is given by $\kk$-linear maps $\Hom_\cA(x,y) \to \Hom_\mathcal{B}\big( F(x) , F(y) \big)$ for all objects $x$ and $y$ of $\cA$.
We write $\mathrm{Fun}_\kk(\cA,\mathcal{B})$ for the $\kk$-linear category of $\kk$-linear functors from $\cA$ to $\mathcal{B}$, where homomorphisms are given by natural transformations.%
\footnote{There is no difference between the notion of a $\kk$-linear natural transformation and an ordinary natural transformation.}
We also write $\cA^\mathrm{op}$ for the ($\kk$-linear) opposite category of $\cA$, so $\cA^\mathrm{op}$ has the same objects as $\cA$ and $\Hom_{\cA^\mathrm{op}}(x,y) = \Hom_\cA(y,x)$ for all objects $x$ and $y$ of $\cA$.

A \emph{$\kk$-linear monoidal category} is a tuple $(\cA,\otimes,\mathbf{1},\alpha,\lambda,\rho)$ consisting of
\begin{itemize}
    \item a $\kk$-linear category $\cA$,
    \item a bifunctor $- \otimes - \colon \cA \times \cA \to \cA$ that is $\kk$-linear in both arguments (called the \emph{tensor product}),
    \item an object $\mathbf{1}$ of $\cA$ (called the \emph{unit object} or \emph{tensor unit}),
    \item a natural isomorphism $\alpha \colon - \otimes (- \otimes - ) \to (- \otimes -) \otimes -$ (called the associativity constraint),
    \item natural isomorphisms $\lambda \colon \mathbf{1} \otimes - \to \id_\cA$ and $\rho \colon - \otimes \mathbf{1} \to \id_\cA$ (called the unitors)
\end{itemize}
subject to the pentagon axiom and the triangle axiom from \cite[Definition 2.2.8]{EGNO}.
We also say that $(\otimes,\mathbf{1},\ldots)$ is a ($\kk$-linear) \emph{monoidal structure} on $\cA$ and occasionally refer to $\cA$ as a monoidal category without explicit reference to the additional data $(\otimes,\mathbf{1},\ldots)$.
The opposite category $\cA^\mathrm{op}$ of a monoidal category $(\cA,\otimes,\mathbf{1},\ldots)$ has a canonical monoidal structure with the same tensor product and unit object, and we define the reverse monoidal category $\cA^\mathrm{rev}$ as the monoidal category with the same underlying category $\cA$, but with the reverse tensor product $X \otimes^\mathrm{rev} Y = Y \otimes X$ for objects $X$ and $Y$ of $\cA$ (and similarly for homomorphisms).

A $\kk$-linear monoidal functor between $\kk$-linear monoidal categories $(\cA,\otimes,\mathbf{1},\ldots)$ and $(\cA',\otimes',\mathbf{1}',\ldots)$ is a pair $(F,\mu)$, where $F \colon \cA \to \cA'$ is a $\kk$-linear functor with $F(\mathbf{1}) \cong \mathbf{1}'$ and $\mu \colon F(-) \otimes' F(-) \to F(- \otimes -)$ is a natural isomorphism that is compatible with the associativity constraints of $\cA$ and $\cA'$ as in \cite[Definition 2.4.1]{EGNO}.
We often refer to $F$ as a monoidal functor without explicitly mentioning the natural isomorphism $\mu$, or we may say that a given functor $F \colon \cA \to \cA'$ can be enhanced to a monoidal functor if there is a natural isomorphism $\mu$ such that the pair $(F,\mu)$ is a monoidal functor.

A \emph{braiding} on a $\kk$-linear monoidal category $(\cA,\otimes,\mathbf{1},\ldots)$ is a natural isomorphism
\[ \beta \colon - \otimes - \to (- \otimes -) \circ \tau , \]
where $\tau \colon \cA \times \cA \to \cA \times \cA$ is given by $(X,Y) \mapsto (Y,X)$, subject to the hexagon axioms from \cite[Definition 8.1.1]{EGNO}.
A monoidal category with a braiding is called a \emph{braided monoidal category}, and a monoidal functor between braided monoidal categories is called a \emph{braided monoidal functor} if it is compatible with the braidings in the sense of \cite[Definition 8.1.7]{EGNO}.
The braiding $\beta$ is called \emph{symmetric} if $\beta_{X,Y} \circ \beta_{Y,X} = \id_{X \otimes Y}$ for all objects $X$ and $Y$ of $\cA$; we then also say that $\cA$ (with the braiding $\beta$) is a \emph{symmetric monoidal category} or that $\cA$ is \emph{symmetrically braided}.
A braided monoidal functor between symmetric monoidal categories is also called a \emph{symmetric monoidal functor}.

An object $X$ of a $\kk$-linear monoidal category $(\cA,\otimes,\mathbf{1},\ldots)$ is called \emph{rigid} if it admits a left dual $X^*$ and a right dual $\prescript{*}{}{X}$ as in \cite[Definition 2.10.1]{EGNO}. The $\kk$-linear monoidal category $\cA$ is called rigid if every object of $\cA$ is rigid.
As noted in the paragraph following Example 2.10.4 in \cite{EGNO}, taking left or right duals can then be enhanced to equivalences
\[ (-)^* \colon \cA^\mathrm{rev} \longrightarrow \cA^\mathrm{op} , \hspace{2cm} \prescript{*}{}{(-)} \colon \cA^\mathrm{rev} \longrightarrow \cA^\mathrm{op} . \]
If $\cA$ is a braided monoidal category, then left and right duals coincide, and if $\cA$ is a rigid braided monoidal category, then the left and right duality functors are naturally isomorphic.

\subsection{Karoubi envelopes}
\label{subsec:Karoubi}

For a $\kk$-linear (not necessarily additive) category $\cA$, we write $\mathrm{Add}(\cA)$ for the additive envelope of $\cA$, that is, the category whose objects are formal direct sums (or finite tuples) of objects of $\cA$ and whose homomorphisms are given by matrices of homomorphisms in $\cA$, as in \cite[Definition 3.2]{BarNatan-Khovanov-homology-tangles}.
Then $\mathrm{Add}(\cA)$ is a $\kk$-linear additive category.
If $\cA$ is a $\kk$-linear monoidal category, then there is a canonical monoidal structure on $\mathrm{Add}(\cA)$ such that the canonical functor $\cA \to \mathrm{Add}(\cA)$ can be enhanced to a monoidal functor.
Furthermore, a (symmetric) braiding on $\cA$ gives rise to a (symmetric) braiding on $\mathrm{Add}(\cA)$ such that the functor $\cA \to \mathrm{Add}(\cA)$ is braided monoidal, and if $\cA$ is rigid, then so is $\mathrm{Add}(\cA)$.

A $\kk$-linear additive category $\cA$ is called \emph{idempotent complete} if if all idempotents in $\cA$ split, that is, if for every object $A$ of $\cA$ with an idempotent endomorphism $e \in \End_\cA(A)$ (i.e.\ such that $e^2 = e$), there is an object $B$ of $\cA$ with homomorphisms $f \colon A \to B$ and $g \colon B \to A$ such that $e = g \circ f$ and $\id_B = f \circ g$.
The \emph{idempotent completion} $\mathrm{Split}(\cA)$ of $\cA$ is the category whose objects are pairs $(A,e)$ of an object $A$ of $\cA$ with an idempotent $e \in \End_\cA(A)$ and where the homomorphisms from $(A,e)$ to $(A',e')$ are given by the homomorphisms $f \colon A \to A'$ such that $e' \circ f = f \circ e$.
The idempotent completion $\mathrm{Split}(\cA)$ is idempotent complete and there is a canonical fully faithful functor $\imath \cA \to \mathrm{Split}(A)$ that is given on objects by $A \mapsto (A,\id_A)$ and on homomorphisms by $f \mapsto f$.
Furthermore, every $\kk$-linear functor $F \colon \cA \to \mathcal{B}$ to an idempotent complete category $\mathcal{B}$ extends uniquely (up to natural isomorphism) to a functor $\tilde F \colon \mathrm{Split}(\cA) \to \mathcal{B}$ such that $F \cong \tilde F \circ \imath$, cf. \cite[Section 2.B]{FreydAbelianCategories}.
Using this universal property, it is straightforward to see that a monoidal structure on $\cA$ gives rise to a monoidal structure on $\mathrm{Split}(\cA)$ such that $\imath$ can be enhanced to a monoidal functor, that a (symmetric) braiding on $\cA$ induces a (symmetric) braiding on $\mathrm{Split}(\cA)$ such that $\imath$ is a braided monoidal functor, and that $\mathrm{Split}(\cA)$ is rigid whenever $\cA$ is rigid.

The \emph{Karoubi envelope} of a $\kk$-linear category $\cA$ is
\[ \Kar(\cA) \coloneqq \mathrm{Split}\big( \mathrm{Add}(\cA) \big) ; \]
it is a $\kk$-linear additive and idempotent complete category with a canonical fully faithful functor $\cA \to \Kar(\cA)$ such that every object of $\Kar(\cA)$ is a direct sum of direct summands of objects of $\cA$.
If $\cA$ is a $\kk$-linear monoidal category, then by the above discussion, there is a canonical monoidal structure on $\Kar(\cA)$ such that the functor $\cA \to \Kar(\cA)$ can be enhanced to a monoidal functor.
Furthermore, a (symmetric) braiding on $\cA$ gives rise to a (symmetric) braiding on $\Kar(\cA)$ such that $\cA \to \Kar(\cA)$ is a braided monoidal functor, and if $\cA$ is rigid, then so is $\Kar(\cA)$.

\subsection{Tensor product of complexes}
\label{subsec:tensorproductofcomplexes}

Given a $\kk$-linear additive category $\cA$, we write $\mathrm{Ch}^b(\cA)$ for the category of bounded (chain) complexes
\[ A = (A_\bullet,d_\bullet) = (\cdots \to A_i \xrightarrow{\; d_i \;} A_{i+1} \to \cdots) \]
in $\cA$, with the usual shift functors $A \mapsto A[n]$ for $n \in \Z$, where $A[n]$ has terms $A[n]_i = A_{i+n}$ and differential $d[n]_i = (-1)^n \cdot d_{i+n}$.
The tautological functor $\cA \to \mathrm{Ch}^b(\cA)$ that sends objects of $\cA$ to complexes concentrated in homological degree zero will be denoted by
\[ X \longmapsto X_\mathrm{c} = ( 0 \to X \to 0 ) . \]
We also write $K^b(\cA)$ for the bounded homotopy category of $\cA$.
The objects of $K^b(\cA)$ are bounded complexes in $\cA$ and the homomorphisms are chain maps up to chain homotopy, as in \cite[Definition 11.2.4]{KashiwaraSchapiraCategoriesSheaves}.
Note that $K^b(\cA)$ is a triangulated category, with distinguished triangles given by the mapping cone triangles, cf. \cite[Theorem 11.3.8]{KashiwaraSchapiraCategoriesSheaves}.
If $\cA$ is abelian, then we further write $D^b(\cA)$ for the bounded derived category of $\cA$, that is, the Verdier localization of $K^b(\cA)$ at the class of quasi-isomorphisms, as in \cite[Definition 13.1.2]{KashiwaraSchapiraCategoriesSheaves}.

Now let $(\cA,\otimes,\mathbf{1},\ldots)$ be an additive $\kk$-linear monoidal category.
Then for complexes $A = (A_\bullet,d_\bullet)$ and $B = (B_\bullet,d'_\bullet)$ in $\cA$, we can form the tensor product double complex
\[ A \otimes_\mathrm{dc} B = \left( \begin{tikzcd}[baseline={([yshift=-axis_height]\tikzcdmatrixname)}]
    & \vdots \ar[d] & & \vdots \ar[d] & \\
    \cdots \ar[r] & A_i \otimes B_j \arrow[rr,"\id_{A_i} \otimes d'_i"] \ar[dd,swap,"d_i \otimes \id_{B_j}"] & & A_i \otimes B_{j+1} \ar[r] \ar[dd,"d_i \otimes \id_{B_{j+1}}"] & \cdots \\
    & & & & \\
    \cdots \ar[r] & A_{i+1} \otimes B_j \arrow[rr,swap,"\id_{A_{i+1}} \otimes d'_i"] \ar[d] & \phantom{AA} & A_{i+1} \otimes B_{j+1} \ar[r] \ar[d] & \cdots \\
    & \vdots & & \vdots & 
\end{tikzcd} \right) . \]
The tensor product complex $A \otimes_\mathrm{c} B$ is defined as the total complex (as in \cite[Section 11.5]{KashiwaraSchapiraCategoriesSheaves})
\[ A \otimes_\mathrm{c} B = \mathrm{tot}( A \otimes_\mathrm{dc} B ) . \]
The tensor product $- \otimes_\mathrm{c} -$ is functorial in both arguments and defines a monoidal structure on $\mathrm{Ch}^b(\cA)$, with unit object $\mathbf{1}_c = (0 \to \mathbf{1} \to 0)$ (where $\mathbf{1}$ is in homological degree zero) and with the obvious associators and unitors.
A (symmetric) braiding on $\cA$ induces a canonical (symmetric) braiding on $\mathrm{Ch}^b(\cA)$, and if $\cA$ is rigid, then so is $\mathrm{Ch}^b(\cA)$, with the left dual of a complex $A = ( \cdots A_i \to A_{i+1} \to \cdots )$ given by $A^* = ( \cdots \to A_{i+1}^* \to A_i^* \to \cdots )$, where $A_i^*$ is in homological degree $-i$.

The monoidal structure $(\otimes_\mathrm{c},\mathbf{1}_c,\ldots)$ on $\mathrm{Ch}^b(\cA)$ also descends to a monoidal structure on the homotopy category $K^b(\cA)$, which has a canonical (symmetric) braiding if $\cA$ is (symmetrically) braided, and $K^b(\cA)$ is rigid whenever $\cA$ is rigid.
Furthermore, the tensor product $\otimes_\mathrm{c}$ is exact in both arguments with respect to the triangulated structure of $K^b(\cA)$.
(This follows from the elementary fact that the functors $X\otimes_\mathrm{c}-$ and $- \otimes_\mathrm{c} X$ commute with taking mapping cones.)
If $\cA$ is abelian and $\otimes$ is exact in both arguments, then $\otimes_\mathrm{c}$ also gives rise to a monoidal structure on $D^b(\cA)$ and $\otimes_\mathrm{c}$ is exact in both arguments with respect to the triangulated structure of $D^b(\cA)$.

\subsection{Presheaves and Day convolution}
\label{subsec:functorcategories}

One of the key categorical constructions that we employ in this article is the Day convolution monoidal structure on categories of presheaves.
We first recall some generalities about presheaf categories and their universal property (see \Cref{lem:freecocompletion}).

\begin{Definition}
    A \emph{presheaf} on a $\kk$-linear category $\cA$ is a $\kk$-linear functor $F \colon \cA^\mathrm{op} \to \Vect_\kk$.
    We write
    \[ \PSh(\cA) \coloneqq \mathrm{Fun}_\kk( \cA^\mathrm{op} , \Vect_\kk ) . \]
    for the category of presheaves on $\cA$.
    The presheaf $F$ is called \emph{pointwise finite-dimensional} if $F(x)$ is finite-dimensional for all objects $x$ of $\cA$.
    We further write
    \[ \PSh^\mathrm{fd}(\cA) \coloneqq \mathrm{Fun}_\kk( \cA^\mathrm{op} , \Vect_\kk^\mathrm{fd} ) \]
    for the full subcategory of pointwise finite-dimensional presheaves.
\end{Definition}

For every $\kk$-linear category $\cA$ the \emph{Yoneda embedding} is the $\kk$-linear functor
\[ Y_\cA \colon \cA \longrightarrow \PSh(\cA) , \qquad x \mapsto \Hom_\cA(-,x) \eqqcolon h_x . \]
A presheaf $F \colon \cA^\mathrm{op} \to \Vect_\kk$ is called \emph{representable} if $F \cong h_x$ for an object $x$ of $\cA$.
For another $\kk$-linear category $\mathcal{B}$ and a $\kk$-linear functor $F \colon \cA \to \mathcal{B}$, we also consider the \emph{restricted Yoneda embedding}
\[ rY_F \colon \mathcal{B} \longrightarrow \PSh(\cA) , \qquad x \longmapsto \Hom_\cA\big( F(-) , x \big) = h_x \circ F . \]
The (enriched) Yoneda lemma can be stated by saying that there is a natural isomorphism
\[ rY_{Y_\cA} \cong \id_{\PSh(\cA)} \]
of endofunctors of $\PSh(\cA)$.
The component of this natural isomorphism at a presheaf $F$ on $\cA$ is an isomorphism
\[ \Hom_\mathrm{PSh(\cA)}(-,F) \circ Y_\cA \cong F \]
in $\PSh(\cA)$, and the latter is given by isomorphisms of $\kk$-vector spaces
\[ \Hom_\mathrm{PSh(\cA)}(h_x,F) \cong F(x) \]
for all objects $x$ of $\cA$.

Now let us fix a $\kk$-linear category $\cA$, and recall that a category is called \emph{cocomplete} if it has all small colimits and that a functor is called cocontinuous if it commutes with all small colimits.
The category $\PSh(\cA)$ is cocomplete (with colimits computed pointwise in $\Vect_\kk$), and it is the \emph{free $\kk$-linear cocompletion} of $\cA$, in that it satisfies the following universal property (see Theorem 4.51 in \cite{KellyEnriched}).

\begin{Lemma} \label{lem:freecocompletion}
	Let $\cC$ be a cocomplete $\kk$-linear category.
	For every $\kk$-linear functor $F \colon \cA \to \cC$, there is a unique (up to natural isomorphism) cocontinuous $\kk$-linear functor $\hat F \colon \PSh(\cA) \to \cC$ such that $F \cong \hat F \circ Y_\cA$.
	Furthermore, $\hat F$ is left adjoint to the restricted Yoneda embedding $rY_F \colon \cC \to \PSh(\cA)$.
\end{Lemma}

\begin{Definition}
    The functor $\hat{F} \colon \PSh(\cA) \to \cC$ in \Cref{lem:freecocompletion} is called the \emph{Yoneda extension} of $F$.
\end{Definition}

An object $X$ of an abelian category $\cC$ is called projective (or injective) if the functor $\Hom_\cC(X,-)$ (respectively $\Hom_\cC(-,X)$) is exact.
We write $\Proj(\cC)$ and $\mathrm{Inj}(\cC)$ for the full subcategories of projective objects and injective objects in $\cC$, respectively.
An object of $\cC$ is called \emph{compact} if $\Hom_\cC(X,-)$ preserves filtered colimits.
In representation theoretic contexts, compact projective objects often correspond to finitely generated projective modules (see Lemma 2.5 in \cite{BrundanStroppelSemiInfiniteHighestWeight}), and so we interchangeably use the terms \emph{finitely generated projective} and \emph{compact projective} and write $\Proj^\mathrm{fg}(\cC)$ for the full subcategory of compact projective objects in $\cC$.
We say that $\cC$ is generated by compact projective objects if for every non-zero homomorphism $X \to Y$ in $\cC$, there is a compact projective object $P$ and a homomorphism $P \to X$ such that the composition $P \to X \to Y$ is non-zero.

\begin{Lemma}
\label{lem:projectivepresheaves}
    Let $\cA$ be a $\kk$-linear category.
    Then the category $\PSh(\cA)$ is generated by compact projective objects and the Yoneda embedding $Y_\cA \colon \cA \to \PSh(\cA)$ induces an equivalence
    \[ \Kar(\cA) \simeq \Proj^\mathrm{fg}\big( \PSh(\cA) \big) . \]
\end{Lemma}
\begin{proof}
    This follows from \cite[Section 5.G]{FreydAbelianCategories}.
\end{proof}

The following variant of the Freyd--Mitchell embedding theorem provides an internal characterization of presheaf categories.

\begin{Lemma}
\label{lem:presheavesrecognition}
    Let $\cC$ be a cocomplete $\kk$-linear abelian category that is generated by compact projective objects.
    Further let $\cA = \Proj^\mathrm{fg}( \cC )$ and let $F \colon \cA \to \cC$ be the canonical embedding.
    Then the restricted Yoneda embedding $rY_F$ and the Yoneda extension $\hat F$ are quasi-inverse equivalences
    \[ rY_F \colon \cC \xrightarrow{~\sim~} \PSh(\cA) , \hspace{2cm} \hat F \colon \PSh(\cA) \xrightarrow{~\sim~} \cC . \]
\end{Lemma}
\begin{proof}
    The claim that $rY_F$ is an equivalence follows from \cite[Section 5.H]{FreydAbelianCategories}.
    Since $\hat F$ is left adjoint to $rY_F$ by \Cref{lem:freecocompletion}, this implies that $\hat F$ is a quasi-inverse of $rY_F$ (see \cite[Theorem IV.4.1]{MacLaneCategories}).
\end{proof}

Now additionally assume that $\cA$ admits a $\kk$-linear monoidal structure $(\otimes, \mathbf{1} , \ldots)$, that is, a monoidal structure such that the tensor product bifunctor $\otimes \colon \cA \times \cA \to \cA$ is bilinear at the level of homomorphisms.
Then $\PSh(\cA)$ has a canonical $\kk$-linear monoidal structure such that the Yoneda embedding $Y_\cA \colon \cA \to \PSh(\cA)$ admits the structure of a monoidal functor, with tensor product bifunctor
\[ \otimes_\mathrm{Day} \colon \PSh(\cA) \times \PSh(\cA) \longrightarrow \PSh(\cA) \]
given by the so-called \emph{Day convolution product} \cite{DayConvolution}.
The monoidal structure on $\PSh(\cA)$ is \emph{biclosed}, in that the functors $F \otimes_\mathrm{Day} -$ and $-\otimes_\mathrm{Day} F$ both admit right adjoints for every presheaf $F$ on $\cA$, and in particular, the tensor product bifunctor is cocontinuous in both arguments (because left adjoints are cocontinuous).
For the benefit of the reader, we sketch below a construction of the Day convolution tensor product using the universal property of $\PSh(\cA)$ from \Cref{lem:freecocompletion}.

\begin{Remark}
\label{rem:Dayconvolution}
	Let $(\cA,\otimes,\mathbf{1},\ldots)$ be a $\kk$-linear monoidal category.
	For every object $x$ of $\cA$, the functor $Y_\cA \circ (x \otimes -) \colon \cA \to \PSh(\cA)$ extends uniquely to a cocontinuous $\kk$-linear functor
	\[ x \mathop{\hat \otimes} - \colon \PSh(\cA) \to \PSh(\cA) \]
	by \Cref{lem:freecocompletion}, and this construction is natural in $x$.
	For a presheaf $G$ on $\cA$, the functor
	\[ - \mathop{\hat \otimes} G \colon \cA \to \PSh(\cA) \]
	extends uniquely to a cocontinuous $\kk$-linear functor
	\[ - \mathop{\tilde \otimes} G \colon \PSh(\cA) \to \PSh(\cA) , \]
	again by \Cref{lem:freecocompletion}, and this construction is natural in $G$.
    Now we can define
    \[ F \otimes_\mathrm{Day} G \coloneqq F \mathop{\tilde \otimes} G \]
    for presheaves $F$ and $G$ on $\cA$.
    Checking that $\otimes_\mathrm{Day}$ (together with the tensor unit $Y_\cA(\mathbf{1})$ and the canonical associator and unitors) defines a monoidal structure on $\PSh(\cA)$ is a straightforward exercise.
\end{Remark}

\begin{Remark}
\label{rem:Dayconvolutionbraided}
    If $\cA$ is a braided monoidal category, then $\PSh(\cA)$ admits a canonical braiding making the Yoneda embedding a braided monoidal functor (see \cite[Example 5.6]{JoyalStreetBraidedMonoidal}) and if $\cA$ is symmetric, then so is $\PSh(\cA)$ (see \cite[Theorem 3.6]{DayConvolution}).
\end{Remark}

For a $\kk$-linear monoidal category $(\cC,\otimes',\ldots)$, we say that $\cC$ is \emph{monoidally cocomplete} if $\cC$ is cocomplete and the tensor product bifunctor $\otimes'$ is cocontinuous in both arguments.
In particular, $(\PSh(\cA),\otimes_\mathrm{Day},\ldots)$ is monoidally cocomplete.
Furthermore, the Day convolution monoidal structure makes $\PSh(\cA)$ the \emph{free $\kk$-linear monoidal cocompletion} of $\cA$, in that the following universal property holds \cite[Theorem 5.1]{ImKellyConvolution}.

\begin{Lemma} \label{lem:freemonoidalcocompletion}
	Let $(\cC,\otimes',\ldots)$ be a $\kk$-linear monoidal and monoidally cocomplete category.
	For every $\kk$-linear monoidal functor $F \colon \cA \to \cC$, there is a unique (up to monoidal natural isomorphism) cocontinuous $\kk$-linear monoidal functor $\hat F \colon \PSh(\cA) \to \cC$ such that $F \cong \hat F \circ Y_\cA$ as monoidal functors.
\end{Lemma}

\begin{Remark} \label{rem:freemonoidalcocompletion}
    \begin{enumerate}
        \item \Cref{lem:freemonoidalcocompletion} implies that Day convolution gives rise to the \emph{unique} (up to monoidal equivalence) monoidal structure on $\PSh(\cA)$ such that $\PSh(\cA)$ is monoidally cocomplete and $Y_\cA$ can be enhanced to a monoidal functor.
        \item Another way of stating \Cref{lem:freemonoidalcocompletion} is that the Yoneda extension $\hat F \colon \PSh(\cA) \to \cC$ from \Cref{lem:freecocompletion} (viewed as an ordinary functor) can be enhanced (uniquely up to monoidal natural isomorphism) to a monoidal functor.
        \item If $\cA$ is a braided (or symmetric) monoidal category, $\cC$ is a $\kk$-linear braided (or symmetric) monoidal and monoidally cocomplete category and $F \colon \cA \to \cC$ is a $\kk$-linear braided (or symmetric) monoidal functor, then the monoidal functor $\hat F \colon \PSh(\cA) \to \cC$ from \Cref{lem:freemonoidalcocompletion} is a braided (or symmetric) monoidal functor, with respect to the canonical (symmetric) braiding on $\PSh(\cA)$ that makes the Yoneda embedding a braided monoidal functor.
        In the case of symmetric monoidal categories and functors, this is explicitly stated in \cite[Theorem 5.1]{ImKellyConvolution}, and the case of braided monoidal categories and functors is analogous.
    \end{enumerate}
\end{Remark}

\subsection{Ind-completions}
\label{subsec:indcompletion}

The ind-completion of a $\kk$-linear category $\cA$ is the full subcategory $\Ind(\cA)$ of $\PSh(\cA)$ whose objects are filtered colimits of representable presheaves.
It is the free completion of $\cA$ with respect to filtered colimits, in the sense that every functor from $\cA$ to a category admitting filtered colimits factors uniquely (up to isomorphism) through the canonical embedding $\cA \to \Ind(\cA)$ (see \cite[Corollary 6.3.2]{KashiwaraSchapiraCategoriesSheaves}).
If $\cA$ is a $\kk$-linear abelian category, then $\Ind(\cA)$ is abelian and cocomplete by \cite[Theorem 8.6.5]{KashiwaraSchapiraCategoriesSheaves}.
For a $\kk$-linear (left or right) exact functor $F \colon \cA \to \cA'$ between $\kk$-linear abelian categories, the corresponding functor $\Ind(F) \colon \Ind(\cA) \to \Ind(\cA')$ is (left or right) exact by \cite[Corollary 8.6.8]{KashiwaraSchapiraCategoriesSheaves}.

Now suppose that $(\cA,\otimes,\mathbf{1},\ldots)$ is a $\kk$-linear monoidal category.
Since the Day convolution tensor product $\otimes_\mathrm{Day}$ on $\PSh(\cA)$ is cocontinuous in both arguments (and hence commutes with filtered colimits in both arguments) and since tensor products of representable presheaves are representable (because the Yoneda embedding is a monoidal functor), Day convolution restricts to a bifunctor
\[ - \otimes - \colon \quad \Ind(\cA) \times \Ind(\cA) \longrightarrow \Ind(\cA) , \]
which endows $\Ind(\cA)$ with a monoidal structure $(\otimes,\mathbf{1},\ldots)$ such that the tensor product $\otimes$ on $\Ind(\cA)$ commutes with filtered colimits in both arguments and the canonical embedding $\cA \to \Ind(\cA)$ can be enhanced to a monoidal functor.%
\footnote{Alternatively, the monoidal structure on $\Ind(\cA)$ can be constructed using the universal property of the ind-completion and the canonical equivalence $\Ind(\cA \times \cA) \simeq \Ind(\cA) \times \Ind(\cA)$, see Propositions 6.1.9 and 6.1.12 in \cite{KashiwaraSchapiraCategoriesSheaves}.}
(Note that we use the same symbols $\otimes$ and $\mathbf{1}$ for the tensor product bifunctors and tensor units in $\cC$ and $\Ind(\cC)$.)
If $\cA$ is (symmetrically) braided, then there is a canonical (symmetric) braiding on $\Ind(\cA)$ such that the canonical embedding $\cA \to \Ind(\cA)$ is a braided monoidal functor.
Finally, if $\cA$ is a $\kk$-linear abelian category with a monoidal structure $(\otimes,\mathbf{1},\ldots)$ such that $\otimes$ is right exact (or exact) in both arguments, then the tensor product $\otimes$ on $\Ind(\cA)$ is right exact (or exact) in both arguments by \cite[Corollary 8.6.8]{KashiwaraSchapiraCategoriesSheaves}.
In that case, it also follows that $\otimes$ is cocontinuous in both arguments (so that $\Ind(\cC)$ is monoidally cocomplete), because every functor that is right exact (hence commutes with finite colimits) and commutes with filtered colimits (hence commutes with small coproducts) is cocontinuous; see the paragraph below Lemma 3.2.9 in \cite{KashiwaraSchapiraCategoriesSheaves}.

\section{Highest weight categories}
\label{sec:highestweightcategories}

In this section, we recall Brundan--Stroppel's formalism of semi-infinite highest weight categories and semi-infinite Ringel duality \cite{BrundanStroppelSemiInfiniteHighestWeight}.
Along the way, we also establish some preliminary results that will be needed later and (more importantly) reformulate semi-infinite Ringel duality in a way that we find more suitable to our discussion of monoidal Ringel duality in \Cref{sec:monoidalRingelduality} below.
More precisely, we rephrase ``lower to upper'' Ringel duality in terms of presheaf categories (see Subsection~\ref{subsec:Ringeldualitypresheaves}) and ``upper to lower'' Ringel duality in terms of exceptional sequences (see Subsection~\ref{subsec:Ringeldualityexceptionalsequences}).
\medskip

For the rest of the article, we fix an algebraically closed field $\kk$.%
\footnote{The assumption that $\kk$ be algebraically closed is due to our reliance on results from \cite{BrundanStroppelSemiInfiniteHighestWeight,SamSnowdenTriangular}.
We expect that most of the results of the article remain valid over fields that are not algebraically closed, at least as long as simple objects in the categories under consideration are absolutely simple, see \cite[Remark~4.3]{SamSnowdenTriangular}.
}
A \emph{$\kk$-linear category} is a category enriched over the category $\Vect_\kk$ of $\kk$-vector spaces.
All categories, functors and equivalences that appear below are $\kk$-linear, though we may sometimes omit spelling this out explicitly.

\subsection{Preliminaries on highest weight categories}
\label{subsec:highestweightcategories}

Before defining highest weight categories, we need to discuss some finiteness conditions on $\kk$-linear abelian categories, following \cite[Section 2]{BrundanStroppelSemiInfiniteHighestWeight}.

\begin{Definition}
    A $\kk$-linear abelian category $\cC$ is \emph{locally finite} if all objects of $\cC$ have finite composition series and all $\Hom$-spaces in $\cC$ are finite-dimensional.    
\end{Definition}

By a result of Takeuchi \cite{TakeuchiMoritaComodules}, a $\kk$-linear abelian category $\cC$ is locally finite if and only if there is a $\kk$-coalgebra $C$ such that $\cC$ is equivalent to the category $C\text{-}\mathrm{comod}_\mathrm{fd}$ of finite-dimensional $C$-comodules.
Then as explained in \cite[Section 2.1]{BrundanStroppelSemiInfiniteHighestWeight}, the ind-completion $\Ind(\cC)$ is equivalent to the category $C\text{-}\mathrm{comod}$ of all $C$-comodules, and this implies that $\cC$ can equivalently be described as the full subcategory of compact objects in $\Ind(\cC)$ or the full subcategory of objects of finite length in $\Ind(\cC)$.
Furthermore, it follows that $\Ind(\cC)$ has enough injectives.

\begin{Definition}
\label{def:Schurian}
    A $\kk$-linear abelian category $\cC$ is \emph{Schurian} if $\cC \simeq \PSh^\mathrm{fd}(\cA)$ for an essentially small $\kk$-linear category $\cA$ with finite-dimensional $\Hom$-spaces.
\end{Definition}

\begin{Remark}
    A \emph{locally finite-dimensional locally unital $\kk$-algebra} is a (not necessarily unital) $\kk$-algebra $A$ with a distinguished collection of idempotents $\{ e_i \mid i \in I \}$ such that $A = \bigoplus_{i,j \in I} e_i A e_j$ and the subspaces $e_i A e_j$ are finite-dimensional for all $i,j \in I$.
    A \emph{locally finite-dimensional} $A$-module is an $A$-module $V$ such that $e_i V$ is finite-dimensional for all $i \in I$.
    We write $A\text{-}\mathrm{mod}_\mathrm{lfd}$ for the category of locally finite-dimensional $A$-modules.
    In \cite[Section 2.3]{BrundanStroppelSemiInfiniteHighestWeight}, a $\kk$-linear abelian category is defined to be Schurian if $\cC \simeq A\text{-} \mathrm{mod}_\mathrm{lfd}$ for some locally finite-dimensional locally unital algebra $A$.
    This matches \Cref{def:Schurian} by \cite[Remark 2.3]{BrundanStroppelSemiInfiniteHighestWeight}.
    We prefer to work with categories of (pointwise finite-dimensional) presheaves because this allows us to more directly use Yoneda extensions and Day convolution to define functors and monoidal structures on (categories related to) Schurian categories, see \Cref{thm:monoidalRingelduality_lowertoupper} below.
\end{Remark}

For a $\kk$-linear abelian category $\cC$ that is locally finite or Schurian, it will often be necessary to pass to a larger cocomplete category, denoted by $\cC^\mathrm{cc}$.
If $\cC$ is locally finite, then we define
\[ \cC^\mathrm{cc} \coloneqq \Ind(\cC) . \]
If $\cC$ is Schurian, then we write $\cC_c$ for the (possibly non-abelian) full subcategory of compact objects in $\cC$, and we define
\[ \cC^\mathrm{cc} \coloneqq \Ind(\cC_c) . \]
Note that these definitions are unambiguous:
If $\cC$ is Schurian and locally finite, then $\cC$ is equivalent to the category of finite-dimensional $A$-modules for a finite-dimensional $\kk$-algebra $A$ by Lemma 2.21 in \cite{BrundanStroppelSemiInfiniteHighestWeight}.
All finite-dimensional $A$-modules are finitely-presented, hence compact by Lemma 2.5 in \cite{BrundanStroppelSemiInfiniteHighestWeight}, and it follows that $\cC = \cC_c$ and the two definitions of $\cC^\mathrm{cc}$ coincide.

\begin{Remark}
\label{rem:presheavesonprojectives}
	Let $\mathcal{D}$ be a Schurian category.
    Further let $\cA = \Proj^\mathrm{fg}(\mathcal{D})$ and let $F \colon \cA \to \mathcal{D}^\mathrm{cc}$ be the canonical embedding.
	Then by Lemmas~\ref{lem:projectivepresheaves} and \ref{lem:presheavesrecognition}, the Yoneda extension and the restricted Yoneda embedding are quasi-inverse equivalences
	\[ \hat F \colon \PSh(\cA) \xrightarrow{~\sim~} \mathcal{D}^\mathrm{cc} \hspace{2cm} rY_F \colon \mathcal{D}^\mathrm{cc} \xrightarrow{~\sim~} \PSh(\cA) . \]
	Furthermore, $\hat F$ and $rY_F$ restrict to equivalences $\mathcal{D} \simeq \PSh(\cA)^\mathrm{fd}$.
\end{Remark}

A \emph{weight datum} for $\cC$ is a triple $(\Lambda,\leq,L)$, where $(\Lambda,\leq)$ is a poset and $L \colon \Lambda \to \mathrm{Ob}(\cC)$ is a function indexing a set $\{ L(\lambda) \mid \lambda \in \Lambda \}$ of representatives for the isomorphism classes of simple objects in $\cC$.
We call $L(\lambda)$ the simple object of \emph{highest weight} $\lambda$.
If $\cC$ is Schurian, then we further denote by $P(\lambda)$ the projective cover of $L(\lambda)$, and if $\cC$ is locally finite abelian, then we write $I(\lambda)$ for the injective hull of $L(\lambda)$ in the ind-completion $\Ind(\cC)$.
For every subset $\Gamma \subseteq \Lambda$, we denote by $\cC_\Gamma$ the Serre subcategory of $\cC$ generated by the simple objects $L(\lambda)$ with $\lambda \in \Gamma$, and we abbreviate
\[ \cC_{\leq \lambda} = \cC_{ \{ \mu \in \Lambda \mid \mu \leq \lambda \} } , \hspace{2cm} \cC_{< \lambda} = \cC_{ \{ \mu \in \Lambda \mid \mu < \lambda \} } . \]
The weight datum $(\Lambda,\leq,L)$ is called \emph{admissible} if the Serre quotient categories $\cC_\lambda \coloneqq \cC_{\leq \lambda} / \cC_{< \lambda}$ are equivalent to the category $\Vect_\kk^\mathrm{fd}$ of finite-dimensional vector spaces for all $\lambda \in \Lambda$.
Note that a weight datum $(\Lambda,\leq,L)$ on $\cC$ restricts to a weight datum $(\Gamma,\leq,L)$ on $\cC_\Gamma$ for every order ideal $\Gamma \subseteq \Lambda$, and that the latter is admissible if the former is admissible.
If $(\Lambda,\leq,L)$ is admissible and $\cC$ is Schurian or locally finite abelian, then by Lemmas 2.24 and 2.26 in \cite{BrundanStroppelSemiInfiniteHighestWeight}, the quotient functor $j^\lambda \colon \cC_{\leq \lambda} \to \cC_\lambda$ has a left adjoint $j^\lambda_!$ and a right adjoint $j^\lambda_*$, and we define the \emph{standard object} and the \emph{costandard object} of highest weight $\lambda$ via
\[ \Delta(\lambda) \coloneqq j^\lambda_! j^\lambda L(\lambda) , \hspace{2cm} \nabla(\lambda) \coloneqq j^\lambda_* j^\lambda L(\lambda) . \]
The following properties of standard objects and costandard objects are consequences of Lemma 3.1, Corollary 3.2 and Lemma 3.4 in \cite{BrundanStroppelSemiInfiniteHighestWeight}.
\begin{enumerate}
	\item the standard object $\Delta(\lambda)$ is a projective cover of $L(\lambda)$ in $\cC_{\leq \lambda}$;
	\item the costandard object $\nabla(\lambda)$ is an injective hull of $L(\lambda)$ in $\cC_{\leq \lambda}$;
	\item the unique maximal subobject of $\Delta(\lambda)$ belongs to $\cC_{< \lambda}$;
	\item the unique maximal quotient of $\nabla(\lambda)$ belongs to $\cC_{< \lambda}$;
	\item we have $\Hom_\cC\big( \Delta(\lambda) , \nabla(\lambda) \big) \cong \kk$ and $\Hom_\cC\big( \Delta(\lambda) , \nabla(\mu) \big) = 0$ for $\lambda,\mu \in \Lambda$ with $\lambda \neq \mu$.
\end{enumerate}
A \emph{standard filtration} of an object $M$ of $\cC$ is a filtration $0 = M_0 \subseteq M_1 \subseteq \cdots \subseteq M_r = M$ such that $M_i / M_{i-1} \cong \Delta(\lambda_i)$ for some $\lambda_i \in \Lambda$, for $i=1,\ldots,r$.
Costandard filtrations are defined analogously, and we write $\cC_\Delta$ and $\cC_\nabla$ for the full subcategories of $\cC$ whose objects are the objects of $\cC$ that admit a standard filtration or a costandard filtration, respectively.

For a Schurian (or finite abelian) category $\cC$ with an admissible weight datum $(\Lambda,\leq,L)$, we consider the following condition:
\begin{itemize}
	\item[($P\Delta$)] For all $\lambda \in \Lambda$, the projective cover $P(\lambda)$ of $L(\lambda)$ admits a standard filtration
	\[ 0 = M_0 \subseteq M_1 \subseteq \cdots \subseteq M_r = P(\lambda) \]
	with $M_r / M_{r-1} \cong \Delta(\lambda)$ and $M_i / M_{i-1} \cong \Delta(\mu_i)$ for $\mu_i \in \Lambda$ with $\mu_i > \lambda$ for $i = 1 , \ldots , r-1$.
\end{itemize}

\begin{Definition}
	\begin{enumerate}
		\item A \emph{finite highest weight category} is a finite abelian category $\cC$ with an admissible weight datum $(\Lambda,\leq,L)$ such that the condition $(P\Delta)$ is satisfied.
		\item An \emph{upper finite highest weight category} is a Schurian category $\cC$ with an admissible weight datum $(\Lambda,\leq,L)$ such that the poset $(\Lambda,\leq)$ is upper finite and the condition $(P\Delta)$ is satisfied.
		\item A \emph{lower finite highest weight category} is a locally finite abelian category $\cC$ with an admissible weight datum $(\Lambda,\leq,L)$ such that the poset $(\Lambda,\leq)$ is lower finite and for every finite order ideal $\Gamma \subseteq \Lambda$, the subcategory $\cC_\Gamma$ is a finite highest weight category with respect to the restricted weight datum $(\Gamma,\leq,L)$.
	\end{enumerate}
\end{Definition}

\begin{Remark}
	A lower finite highest weight category can be equivalently defined as a locally finite abelian category $\cC$ with an admissible weight datum $(\Lambda,\leq,L)$ such that the poset $(\Lambda,\leq)$ is lower finite and we have
	\[ \Ext_\cC^1\big( \Delta(\lambda) , \nabla(\mu) \big) = 0 , \hspace{2cm} \Ext_\cC^2\big( \Delta(\lambda) , \nabla(\mu) \big) = 0 \]
	for $\lambda,\mu \in \Lambda$; see Corollary 3.64 in \cite{BrundanStroppelSemiInfiniteHighestWeight}.
	There is yet another equivalent characterization of lower finite highest weight categories in terms of (possibly infinite) ascending costandard filtrations for the indecomposable injective objects in $\Ind(\cC)$; we do not recall the details here and refer the reader to Corollaries 3.57 and 3.61 in \cite{BrundanStroppelSemiInfiniteHighestWeight} or Theorem 3.1.3 in \cite{CoulembierIndCompletionHighestWeight}.
\end{Remark}

\begin{Remark}
    Let $\cC$ be a lower finite (or upper finite) highest weight category with weight datum $(\Lambda,\leq,L)$.
    Then the opposite category $\cC^\mathrm{op}$ is a lower finite (or upper finite) highest weight category with the same weight datum, but with the standard objects and costandard objects interchanged, that is $\Delta^\mathrm{op}(\lambda) = \nabla(\lambda)$ and $\nabla^\mathrm{op}(\lambda) = \Delta(\lambda)$.
    See Theorem 3.9 and the paragraphs below Definition 3.34 and Remark 3.51 in \cite{BrundanStroppelSemiInfiniteHighestWeight}.
\end{Remark}

For applications in Subsection~\ref{subsec:monoidalRingelduality_uppertolower}, we will need the following explicit description of the opposite category of an upper finite highest weight category.

\begin{Remark}
\label{rem:oppositeupperfinite}
	Let $\mathcal{D}$ be an upper finite highest weight category.
    Further let $\cA = \Proj^\mathrm{fg}(\mathcal{D})$ and let $F \colon \cA \to \mathcal{D}$ be the canonical embedding.
	Then the restricted Yoneda embedding $rY_F \colon \mathcal{D} \to \PSh^\mathrm{fd}(\cA)$ is an equivalence by \Cref{rem:presheavesonprojectives}, and this gives rise to an equivalence
	\begin{equation} \label{eq:presheavesoppositedual}
	    \mathcal{D}^\mathrm{op} \simeq \PSh^\mathrm{fd}(\cA)^\mathrm{op} \xrightarrow{~\sim~} \PSh^\mathrm{fd}(\cA^\mathrm{op}) , \qquad G \longmapsto (-)^* \circ G ,
	\end{equation}
	where $(-)^* \colon \Vect_\kk^\mathrm{fd} \to \Vect_\kk^{\mathrm{fd},\mathrm{op}}$ denotes the usual duality functor on finite-dimensional vector spaces.
	The equivalence \eqref{eq:presheavesoppositedual} further gives rise to equivalences $\mathcal{D}^{\mathrm{op},\mathrm{cc}} \simeq \PSh(\cA^\mathrm{op})$ and $\Proj^\mathrm{fg}(\mathcal{D}^\mathrm{op}) \simeq \cA^\mathrm{op}$, the latter by \Cref{lem:projectivepresheaves}.
\end{Remark}

For later use, we record three elementary (and well-known) results on highest weight categories.

\begin{Lemma} \label{lem:projectiveresolutionDelta}
	Let $\cC$ be a finite or upper finite highest weight category with weight datum $(\Lambda,\leq,L)$.
	Then for all $\lambda \in \Lambda$, the standard object $\Delta(\lambda)$ has a finite projective resolution
	\[ \cdots \to P_{-2} \to P_{-1} \to P_0 \to \Delta(\lambda) \to 0 \]
	such that $P_0 \cong P(\lambda)$ and $P_i$ is a finite direct sum of certain $P(\mu)$ with $\mu > \lambda$ for all $i<0$.
\end{Lemma}
\begin{proof}
	This is an easy consequence of the condition ($P\Delta$) and the horseshoe lemma, using induction and the fact that $(\Lambda,\leq)$ is (upper) finite.
\end{proof}

\begin{Lemma} \label{lem:Extvanishingexceptional}
	Let $\cC$ be an upper or lower finite highest weight category with weight datum $(\Lambda,\leq,L)$.
	For $\lambda,\mu \in \Lambda$ and $i \geq 0$, we have
	\begin{enumerate}
		\item $\Ext_\cC^i\big( L(\lambda) , \nabla(\mu) \big) = 0$ and $\Ext_\cC^i\big( \nabla(\lambda) , \nabla(\mu) \big) = 0$ unless $\lambda > \mu$ or $\lambda=\mu$ and $i=0$;
		\item $\Ext_\cC^i\big( \Delta(\lambda) , L(\mu) \big) = 0$ and $\Ext_\cC^i\big( \Delta(\lambda) , \Delta(\mu) \big) = 0$ unless $\lambda < \mu$ or $\lambda=\mu$ and $i=0$.
\end{enumerate}
\end{Lemma}
\begin{proof}
	If $\cC$ is a finite or upper finite highest weight category, then the $\Ext$-groups in part (2) of the lemma can be computed using a projective resolution of $\Delta(\lambda)$ as in \Cref{lem:projectiveresolutionDelta}.
	Then the $\Ext$-vanishing properties in (2) are obvious, and (1) follows by passing to the opposite category $\cC^\mathrm{op}$.
	If $\cC$ is a lower finite highest weight category, then the $\Ext$-groups in (1) and (2) can be computed in the finite highest weight category $\cC_\Gamma$ for any finite order ideal $\Gamma \subseteq \Lambda$ containing $\lambda$ and $\mu$, and so the claims reduce to the finite case, which was treated above.
\end{proof}

\begin{Lemma} \label{lem:Extvanishingstandardcostandard}
	Let $\cC$ be an upper or lower finite highest weight category with weight datum $(\Lambda,\leq,L)$.
	For $\lambda,\mu \in \Lambda$ and $i \geq 0$, we have
	\[ \Ext_\cC^i\big( \Delta(\lambda) , \nabla(\mu) \big) \cong \begin{cases} \kk & \text{if } \lambda = \mu \text{ and } i=0 , \\ 0 & \text{otherwise} . \end{cases} \]
\end{Lemma}
\begin{proof}
	As in the proof of \Cref{lem:Extvanishingexceptional}, we see that $ \Ext_\cC^i\big( \Delta(\lambda) , \nabla(\mu) \big) = 0$ unless $\lambda<\mu$ or $\lambda = \mu$ and $i=0$.
	By passing to the opposite category $\cC^\mathrm{op}$, we further obtain that $ \Ext_\cC^i\big( \Delta(\lambda) , \nabla(\mu) \big) = 0$ unless $\lambda>\mu$ or $\lambda = \mu$ and $i=0$, and the claim is immediate.
\end{proof}

\subsection{Tilting objects and Ringel duality}
\label{subsec:tiltingobjectsRingelduality}

Let $\cC$ be a lower finite highest weight category with weight poset $(\Lambda,\leq)$.
An object of $\cC$ is called a \emph{tilting object} if it admits both a standard filtration and a costandard filtration.
We write $\Tilt(\cC)$ for the full subcategory of tilting objects in $\cC$.
The following classification of tilting objects is originally due to Ringel \cite{RingelAlmostSplit}; we refer the reader to \cite[Theorem 4.2]{BrundanStroppelSemiInfiniteHighestWeight} or \cite[Theorem 7.14]{RicheHabilitation} for a proof in our setting.

\begin{Proposition}
    Let $\cC$ be a lower finite highest weight category with weight poset $(\Lambda,\leq)$.
    \begin{enumerate}
        \item For all $\lambda \in \Lambda$, there is a unique (up to isomorphism) indecomposable tilting object $T(\lambda)$ such that $T(\lambda)$ belongs to $\cC_{\leq \lambda}$ and $[T(\lambda) : L(\lambda)] = 1$.
        \item Every indecomposable tilting object is isomorphic to $T(\lambda)$ for some $\lambda \in \Lambda$.
        \item Every tilting object is isomorphic to a finite direct sum of indecomposable tilting objects.
    \end{enumerate}
\end{Proposition}

Now let $\cC$ be an upper finite highest weight category with weight poset $(\Lambda,\leq)$.
Following \cite[Section 4.3]{BrundanStroppelSemiInfiniteHighestWeight}, we say that an object of $\cC$ is a \emph{tilting object} if it admits a (possibly infinite) ascending standard filtration and a (possibly infinite) descending costandard filtration.
In this setting, we have the following classification of tilting objects; see Theorem 4.18 and Corollary 4.19 in \cite{BrundanStroppelSemiInfiniteHighestWeight}.

\begin{Proposition}
    Let $\cC$ be an upper finite highest weight category with weight poset $(\Lambda,\leq)$.
    \begin{enumerate}
        \item For all $\lambda \in \Lambda$, there is a unique (up to isomorphism) indecomposable tilting object $T(\lambda)$ such that $T(\lambda)$ belongs to $\cC_{\leq \lambda}$ and $[T(\lambda):L(\lambda)] = 1$.
        \item For all $(n_\lambda)_{\lambda \in \Lambda} \in \N^\Lambda$, the direct sum $\bigoplus_{\lambda \in \Lambda} T(\lambda)^{\oplus n_\lambda}$ is a tilting object.
        \item Every tilting object is isomorphic to a direct sum
        $\bigoplus_{\lambda \in \Lambda} T(\lambda)^{\oplus n_\lambda} $
        for uniquely determined multiplicities $(n_\lambda)_{\lambda \in \Lambda} \in \N^\Lambda$.
    \end{enumerate}
\end{Proposition}

Lower finite highest weight categories and upper finite highest weight categories are related via \emph{Ringel duality}, originally due to Ringel \cite[Section 6]{RingelAlmostSplit} in the finite setting and generalized to the semi-infinite setting we consider here by Brundan--Stroppel \cite[Section 4]{BrundanStroppelSemiInfiniteHighestWeight}.
In the following, we recall the approach to Ringel duality from \cite{BrundanStroppelSemiInfiniteHighestWeight} and reformulate it using free cocompletions.

Let $\cC$ be a lower finite highest weight category with weight datum $(\Lambda,\leq,L)$ and let $(T_i)_{i \in I}$ be a collection of tilting objects with the property that for all $\lambda \in \Lambda$, there is an element $i \in I$ such that $T(\lambda)$ is a direct summand of $T_i$.
Then we consider the locally unital algebra
\begin{equation} \label{eq:Ringeldualalgebra}
	A = \Big( \bigoplus_{i,j \in I} \Hom_\cC(T_i,T_j) \Big)^\mathrm{op}
\end{equation}
and call the category
\[ \cC^\vee = A \text{-} \mathrm{mod}_\mathrm{lfd} \]
of locally finite-dimensional $A$-modules the \emph{Ringel dual} of $\cC$ with respect to $(T_i)_{i \in I}$.
We also write
\[ \cC^{\vee,\mathrm{cc}} = A \text{-} \mathrm{mod} \]
for the (cocomplete) category of all $A$-modules and note that $\cC^{\vee,\mathrm{cc}}$ is canonically equivalent to the ind-completion $\Ind(\cC^\vee_c)$ of the full subcategory of compact objects in $\cC^\vee$, as pointed out in \cite[Section 2.3]{BrundanStroppelSemiInfiniteHighestWeight}.
The Ringel duality functor with respect to $(T_i)_{i \in I}$ is defined via
\[ R = R_\cC \coloneqq \bigoplus_{i \in I} \Hom_\cC( T_i , - ) \colon \quad \cC^\mathrm{cc} \longrightarrow \cC^{\vee,\mathrm{cc}} , \]
and it restricts to a functor from $\cC$ to $\cC^\vee$.
The following result is Theorem 4.25 in \cite{BrundanStroppelSemiInfiniteHighestWeight}.

\begin{Theorem} \label{thm:Ringelduality}
	In the setup discussed above, $\cC^\vee$ is an upper finite highest weight category with weight datum $(L^\vee,\Lambda,\leq^\mathrm{op})$, where the distinguished objects are given by
	\[ \Delta^\vee(\lambda) = R \nabla(\lambda) , \qquad P^\vee(\lambda) = R T(\lambda) , \qquad L^\vee(\lambda) = \mathrm{hd} P^\vee(\lambda) , \qquad T^\vee(\lambda) = R I(\lambda) \]
	for all $\lambda \in \Lambda$, where $\mathrm{hd}$ denotes the head (or top) of an object.
	Furthermore, the functor $R$ restricts to equivalences $\cC_\nabla \simeq \cC^\vee_\Delta$ and $\Tilt(\cC) \simeq \Proj^\mathrm{fg}(\cC^\vee)$ and sends short exact sequences in $\cC_\nabla$ to short exact sequences in $\cC^\vee_\Delta$.
\end{Theorem}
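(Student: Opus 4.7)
The plan is to reinterpret the Ringel dual $\cC^\vee$ as a presheaf category and exploit the universal properties established in \Cref{sec:tensorcategories}. Let $\mathcal{T}$ denote the full $\kk$-linear subcategory of $\cC^\mathrm{cc}$ on the objects $\{T_i \mid i \in I\}$, so that the locally unital algebra $A$ of \eqref{eq:Ringeldualalgebra} is identified with the path algebra of $\mathcal{T}^\mathrm{op}$. I would first verify that this yields equivalences $\cC^{\vee,\mathrm{cc}} \simeq \PSh(\mathcal{T})$ and $\cC^\vee \simeq \PSh^\mathrm{fd}(\mathcal{T})$ under which $R = \bigoplus_i \Hom_\cC(T_i,-)$ corresponds to the restricted Yoneda embedding $rY_F$ along the inclusion $F\colon \mathcal{T} \hookrightarrow \cC^\mathrm{cc}$. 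Combining \Cref{lem:projectivepresheaves} with Krull--Schmidt (which gives $\Kar(\mathcal{T}) \simeq \Tilt(\cC)$ by the hypothesis that each $T(\lambda)$ appears as a summand of some $T_i$) then immediately yields the equivalence $\Tilt(\cC) \simeq \Proj^\mathrm{fg}(\cC^\vee)$.

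Next, I would establish that $R$ is exact on $\cC_\nabla$. This reduces to $\Ext^{>0}_\cC(T,N) = 0$ for every tilting object $T$ and every $N \in \cC_\nabla$, which follows by double induction (on the lengths of the standard filtration of $T$ and the costandard filtration of $N$) from the $\Ext$-vanishing statement $\Ext^{>0}_\cC(\Delta(\lambda),\nabla(\mu)) = 0$ of \Cref{lem:Extvanishingstandardcostandard}. This gives the claim that $R$ sends short exact sequences in $\cC_\nabla$ to short exact sequences in $\cC^\vee_\Delta$.

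The core work lies in verifying the axioms of an upper finite highest weight category for $\cC^\vee$ with weight datum $(\Lambda,\leq^\mathrm{op},L^\vee)$. Upper finiteness of $(\Lambda,\leq^\mathrm{op})$ is automatic from lower finiteness of $(\Lambda,\leq)$. To identify the standard objects, I would apply $R$ to the costandard filtration $0 = F_0 \subset F_1 \subset \cdots \subset F_n = T(\lambda)$ of the indecomposable tilting, with $F_n/F_{n-1} \cong \nabla(\lambda)$ and $F_i/F_{i-1} \cong \nabla(\mu_i)$ for $\mu_i < \lambda$ when $i<n$; exactness on $\cC_\nabla$ then produces a filtration of $P^\vee(\lambda) = RT(\lambda)$ with subquotients $\Delta^\vee(\mu_i) = R\nabla(\mu_i)$, ending in $\Delta^\vee(\lambda)$. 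The fact that $P^\vee(\lambda)$ is indecomposable with simple head $L^\vee(\lambda)$ follows from transferring the indecomposability and locality of $\End_\cC(T(\lambda))$ through the fully faithful functor $R|_{\Tilt(\cC)}$ established in the first paragraph. Admissibility $\cC^\vee_\lambda \simeq \Vect_\kk^\mathrm{fd}$ is then a consequence of $\Hom_\cC(\Delta(\lambda),\nabla(\lambda)) \cong \kk$.

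The main obstacle will be controlling the composition multiplicities of $\Delta^\vee(\lambda)$ to confirm that $[\Delta^\vee(\lambda):L^\vee(\mu)] = 0$ unless $\mu \geq^\mathrm{op} \lambda$, i.e.\ $\mu \leq \lambda$. I would attack this via a BGG-type reciprocity in $\cC$, namely $(T(\mu):\nabla(\nu)) = [T(\mu):L(\nu)]_\Delta$, combined with the presheaf description of $\cC^\vee$, which lets one read off composition multiplicities from dimensions of $\Hom_\cC(T_\mu,R\nabla(\lambda))$. A separate but parallel argument handles the identification $T^\vee(\lambda) = RI(\lambda)$: here one must use that the injective hull $I(\lambda) \in \Ind(\cC)$ admits both an ascending standard filtration and a (finite) costandard filtration, so that $RI(\lambda)$ acquires both a standard and a costandard filtration in $\cC^\vee$, which together with the multiplicity analysis above forces it to be the indecomposable tilting $T^\vee(\lambda)$.
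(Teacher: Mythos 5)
The paper does not prove this theorem at all: it is imported verbatim as Theorem 4.25 of Brundan--Stroppel, so there is no internal argument to compare yours against. That said, your sketch essentially anticipates the paper's later \Cref{prop:Ringeldualpresheaves}, which reformulates this same Ringel duality through presheaf categories and the restricted Yoneda embedding, so the framing is reasonable. But there are two substantive problems.

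First, a sign error that propagates: you want to show $[\Delta^\vee(\lambda):L^\vee(\mu)]=0$ unless $\mu \leq^\mathrm{op} \lambda$, i.e.\ $\mu \geq \lambda$ in the original order, not ``$\mu\geq^\mathrm{op}\lambda$, i.e.\ $\mu\leq\lambda$'' as written. The correct mechanism is not BGG reciprocity but the orthogonality pairing of \Cref{lem:Extvanishingstandardcostandard}: $[\Delta^\vee(\lambda):L^\vee(\mu)] = \dim\Hom_{\cC^\vee}(P^\vee(\mu),\Delta^\vee(\lambda)) = \dim\Hom_\cC\big(T(\mu),\nabla(\lambda)\big) = \big(T(\mu):\Delta(\lambda)\big)$, which vanishes unless $\lambda\leq\mu$ since $T(\mu)\in\cC_{\leq\mu}$.

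Second, and more seriously, the treatment of $T^\vee(\lambda) = RI(\lambda)$ has a genuine gap. The injective hull $I(\lambda)\in\Ind(\cC)$ has an ascending \emph{costandard} filtration, not a standard one, and this filtration is in general infinite; there is no ``finite costandard filtration'' and no ascending $\Delta$-filtration of $I(\lambda)$ to invoke. Applying $R$ (which commutes with the relevant filtered colimits) to the ascending $\nabla$-filtration of $I(\lambda)$ does give $RI(\lambda)$ an ascending $\Delta^\vee$-filtration, which is half of what is required. But the other half of the tilting condition for upper finite highest weight categories --- a \emph{descending} $\nabla^\vee$-filtration of $RI(\lambda)$ --- is the actual content of the claim, and your sketch offers no route to it. In Brundan--Stroppel's proof this is where one needs the left adjoint $L$ and the coend coalgebra construction (see the surrounding Theorem 4.27 and Definition 4.26 in their paper); it does not drop out of the arguments you list. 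Until this is repaired, the identification of $T^\vee(\lambda)$ is unproved.
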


By Theorem 4.27 and Corollary 4.29 in \cite{BrundanStroppelSemiInfiniteHighestWeight}, the Ringel duality functor $R \colon \cC^\mathrm{cc} \to \cC^{\vee,\mathrm{cc}}$ admits a left adjoint functor
\[ L = L_\cC \colon \cC^{\vee,\mathrm{cc}} \longrightarrow \cC^\mathrm{cc} \]
which satisfies $L \Delta^\vee(\lambda) \cong \nabla(\lambda)$ and $L P^\vee(\lambda) \cong T(\lambda)$ for all $\lambda \in \Lambda$ and restricts to equivalences $\cC^\vee_\Delta \cong \cC_\nabla$ and $\Proj^\mathrm{fg}(\cC^\vee) \cong \Tilt(\cC)$.
Furthermore, $L$ sends exact sequences in $\cC^\vee_\Delta$ to exact sequences in $\cC$ by the proof of \cite[Theorem 4.27]{BrundanStroppelSemiInfiniteHighestWeight}. 
We call $L$ the \emph{(opposite) Ringel duality functor}.

\subsection{Lower to upper Ringel duality and functor categories}
\label{subsec:Ringeldualitypresheaves}

For the applications we want to discuss below, it will be convenient to rephrase Ringel duality in a way that does not rely on a choice of tilting objects $(T_i)_{i \in I}$ in $\cC$ and an explicit realization of $\cC^\vee$ as a category of modules over an explicit locally unital algebra.
This is the subject of the following proposition.
Recall that for a lower finite highest weight category $\cC$, we write $\cC^\vee$ for the Ringel dual of $\cC$ and $R = R_\cC \colon \cC^\mathrm{cc} \to \cC^{\vee,\mathrm{cc}}$ and $L = L_\cC \colon \cC^{\vee,\mathrm{cc}} \to \cC^\mathrm{cc}$ for the Ringel duality functors.
Also recall the restricted Yoneda embedding and the Yoneda extension from Subsection~\ref{subsec:functorcategories}.

\begin{Proposition} \label{prop:Ringeldualpresheaves}
	Let $\cC$ be a lower finite highest weight category.
    Further let $\cA = \Tilt(\cC)$ and write $F \colon \cA \to \cC$ for the canonical embedding.
	Then there is an equivalence
	\[ \psi \colon \cC^{\vee,\mathrm{cc}} \xrightarrow{~\sim~} \PSh(\cA) \]
	such that $\psi \circ R \cong rY_F$ and $L \cong \hat F \circ \psi$.
	Furthermore, $\psi$ restricts to an equivalence $\cC^\vee \xrightarrow{ ~ \sim ~ } \PSh^\mathrm{fd}(\cA)$.
\end{Proposition}
\begin{proof}
    Consider the functor $G = R \circ F \colon \cA \to \cC^{\vee,\mathrm{cc}}$ and recall that $G$ induces an equivalence $\cA \simeq \Proj^\mathrm{fg}(\cC^\vee)$ by \Cref{thm:Ringelduality}.
    As $\cC^{\vee,\mathrm{cc}}$ is cocomplete and $\Proj^\mathrm{fg}(\cC^\vee)$ is a generating set of compact projective objects in $\cC^{\vee,\mathrm{cc}}$ (cf.\ \Cref{lem:projectivepresheaves}), the restricted Yoneda embedding
    \[ \psi = rY_G \colon \cC^{\vee,\mathrm{cc}} \longrightarrow \PSh(\cA) . \]
    is an equivalence by \Cref{lem:presheavesrecognition}, with quasi-inverse given by the Yoneda extension $\hat G \colon \PSh(\cA) \to \cC^{\vee,\mathrm{cc}}$.
    By \Cref{lem:freecocompletion} and \Cref{thm:Ringelduality}, we have natural isomorphisms
    \[ L \circ \hat G \circ Y_\cA \cong L \circ G = L \circ R \circ F \cong F \cong \hat F \circ Y_\cA , \]
    and the uniqueness statement in \Cref{lem:freecocompletion} implies that $\hat F \cong L \circ \hat G$ and $L \cong \hat F \circ \psi$.
    The natural isomorphism $\psi \circ R \cong rY_F$ follows from the adjunctions $L \dashv R$ and $\hat F \dashv rY_F$ (see \Cref{thm:Ringelduality} and \Cref{lem:freecocompletion}).
    Finally, note that $\psi$ clearly restricts to a fully faithful functor $\cC^\vee \to \PSh(\cA)^\mathrm{fd}$.
    For an object $x$ of $\cA$ and an object $y$ of $\cC^{\vee,\mathrm{cc}}$ such that $\psi(x)$ belongs to $\PSh(\cA)^\mathrm{fd}$, we have
    \[ \Hom_{\cC^\vee}\big( R(x) , y \big) \cong \Hom_{\PSh(\cA)}\big( \psi \circ R(x) , \psi(y) \big) \cong \Hom_{\PSh(\cA)}\big( rY_F(x) , \psi(y) \big) \cong \psi(y)(x) \]
    by the Yoneda lemma, hence all composition multiplicities in $y$ are finite and $y$ belongs to $\cC^\vee$ by Lemma 2.13 in \cite{BrundanStroppelSemiInfiniteHighestWeight}.
    Hence $\psi$ restricts to an equivalence $\cC^\vee \xrightarrow{ ~ \sim ~ } \PSh^\mathrm{fd}(\cA)$, as claimed.
\end{proof}

As an immediate consequence of \Cref{prop:Ringeldualpresheaves}, we obtain the following criterion for when an upper finite highest weight category is the Ringel dual of a given lower finite highest weight category.

\begin{Remark} \label{rem:tiltingequivalentprojectiveimpliesRingeldual}
    Suppose that we are given a lower finite highest weight category $\cC$ with weight poset $(\Lambda,\leq)$ and an upper finite highest weight category $\mathcal{D}$ with weight poset $(\Lambda,\leq^\mathrm{op})$.
    Further suppose that there is a $\kk$-linear equivalence $\psi \colon \Tilt(\cC) \xrightarrow{~\sim~} \Proj^\mathrm{fg}(\mathcal{D})$ such that $\psi T_\cC(\lambda)\cong P_\mathcal{D}(\lambda)$ for all $\lambda \in \Lambda$.
    Then there are equivalences
    \begin{gather*}
        \tilde \psi \colon \quad \cC^{\vee,\mathrm{cc}} \simeq \PSh\big( \Tilt(\cC) \big) \simeq \PSh\big( \Proj^\mathrm{fg}( \mathcal{D} ) \big) \simeq \mathcal{D}^\mathrm{cc} , \phantom{\quad \colon \tilde \psi} \\
        \cC^\vee \simeq \PSh^\mathrm{fd}\big( \Tilt(\cC) \big) \simeq \PSh^\mathrm{fd}\big( \Proj^\mathrm{fg}( \mathcal{D} ) \big) \simeq \mathcal{D} 
    \end{gather*}
    by \Cref{prop:Ringeldualpresheaves} and \Cref{rem:presheavesonprojectives}.
    By construction, we have
    \[ \tilde \psi L^\vee(\lambda) \cong L_\mathcal{D}(\lambda) , \hspace{2cm} \tilde \psi \Delta^\vee(\lambda) \cong \Delta_\mathcal{D}(\lambda) , \hspace{2cm} \tilde \psi P^\vee(\lambda) \cong P_\mathcal{D}(\lambda) \]
    for all $\lambda \in \Lambda$, and for the Ringel duality functor $R = R_\cC \colon \cC \to \cC^\vee$, there is a natural isomorphism
    \[ \tilde \psi \circ R\vert_{\Tilt(\cC)} \cong \psi \]
    of functors from $\Tilt(\cC)$ to $\Proj^\mathrm{fg}(\mathcal{D})$.
\end{Remark}

As a further consequence of \Cref{prop:Ringeldualpresheaves}, we explain how a duality on a lower finite highest weight category gives rise to a duality on its Ringel dual.

\begin{Remark} \label{rem:Chevalleydualitylowertoupper}
    Let $\cC$ be a lower finite highest weight category with weight poset $(\Lambda,\leq)$ and suppose that there is an equivalence $\tau \colon \cC \to \cC^\mathrm{op}$ and an order automorphism $\sigma \colon \Lambda \to \Lambda$ such that $\tau\big( L(\lambda) \big) \cong L( \sigma \lambda )$ for all $\lambda \in \Lambda$.
    Then it follows that
    \[ \tau\big( \Delta(\lambda) \big) \cong \nabla(\sigma \lambda) , \hspace{2cm} \tau\big( \nabla(\lambda) \big) \cong \Delta(\sigma\lambda) , \hspace{2cm} \tau\big( T(\lambda) \big) \cong T(\sigma\lambda) \]
    for all $\lambda \in \Lambda$, and in particular $\tau$ restricts to an equivalence $\Tilt(\cC) \cong \Tilt(\cC)^\mathrm{op} = \Tilt(\cC^\mathrm{op})$.
    This in turn gives rise to an equivalence
    \[ \tau^\vee \colon \quad \cC^\vee \simeq \PSh^\mathrm{fd}\big( \Tilt(\cC) \big) \simeq \PSh^\mathrm{fd}\big( \Tilt(\cC)^\mathrm{op} \big) \simeq \PSh^\mathrm{fd}\big( \Tilt(\cC) \big)^\mathrm{op} \simeq \cC^{\vee,\mathrm{op}} , \]
    where the first and last equivalence are given by \Cref{prop:Ringeldualpresheaves}, the second equivalence is obtained by composition with $\tau$, and the third equivalence is obtained by composition with $(-)^* \colon \Vect_\kk^\mathrm{fd} \to \Vect_\kk^{\mathrm{fd},\mathrm{op}}$, the usual duality funcor on finite-dimensional vector spaces.
    By construction, the equivalence $\tau^\vee$ satisfies $\tau^\vee\big( L^\vee(\lambda) \big) \cong L^\vee(\sigma\lambda)$ and therefore also
    \[ \tau^\vee\big( \Delta^\vee(\lambda) \big) \cong \nabla^\vee(\sigma \lambda) , \hspace{2cm} \tau^\vee\big( \nabla^\vee(\lambda) \big) \cong \Delta^\vee(\sigma \lambda) , \hspace{2cm} \tau^\vee\big( T^\vee(\lambda) \big) \cong T^\vee(\sigma \lambda) \]
    for all $\lambda \in \Lambda$.
\end{Remark}

A full subcategory $\mathcal{R}$ in a category $\mathcal{S}$ is called a \emph{reflective subcategory} if the embedding $\mathcal{R} \to \mathcal{S}$ has a left adjoint functor.
Given a pair of functors $L \colon \mathcal{R} \to \mathcal{S}$ and $R \colon \mathcal{S} \to \mathcal{R}$ with an adjunction $L \dashv R$, the functor $R$ exhibits $\mathcal{R}$ as a reflective subcategory of $\mathcal{S}$ if and only if $L$ is fully faithful.
For later use, we provide a sufficient condition for when the Ringel duality functor exhibits a lower finite highest weight category $\cC$ as a reflective subcategory of its Ringel dual $\cC^\vee$.
(This situation occurs if $\cC$ is the category of representations of a quantum group at a root of unity; see \Cref{sec:affineLiealgebras} below.)

\begin{Proposition} \label{prop:RingelDualReflective}
    Let $\cC$ be a lower finite highest weight category.
    Suppose that $\cC$ has enough projective objects and that all projective objects are tilting objects.
    Then the Ringel duality functor
    \[ R = R_\cC \colon \cC^\mathrm{cc} \longrightarrow \cC^{\vee,\mathrm{cc}} \]
    is fully faithful; hence it exhibits $\cC^\mathrm{cc}$ as a reflective subcategory of $\cC^{\vee,\mathrm{cc}}$.
\end{Proposition}
\begin{proof}
    Let us write $\cA = \Tilt(\cC)$ and $F \colon \cA \to \cC^\mathrm{cc}$ for the canonical embedding.
    By \Cref{prop:Ringeldualpresheaves}, it suffices to show that the restricted Yoneda embedding $rY_F \colon \cC^\mathrm{cc} \to \PSh(\cA)$ is fully faithful.
    As all objects of $\cC$ (and hence of $\cA$) are compact in $\cC^\mathrm{cc}$, the functor $rY_F$ commutes with filtered colimits.
    Furthermore, $rY_F$ restricts to a functor $rY_F^\circ \colon \cC \to \PSh^\mathrm{fd}(\cA)$, and $rY_F$ is the unique extension of $rY_F^\circ$ to $\cC^\mathrm{cc} = \Ind(\cC)$ that commutes with filtered colimits (see \cite[Corollary 6.3.2]{KashiwaraSchapiraCategoriesSheaves}).
    Again using the fact that all objects of $\cC$ are compact in $\cC^\mathrm{cc}$, we get from \cite[Proposition 6.3.4]{KashiwaraSchapiraCategoriesSheaves} that $rY_F$ is fully faithful provided that $rY_F^\circ$ is fully faithful.
    This is what we will prove below.
    
    First note that for a homomorphism $f \colon X \to Y$ in $\cC$ and for a projective cover $\pi_X \colon P_X \to X$, we have $f \circ \pi_X= 0$ if and only if $f=0$.
    Since $P_X$ belongs to $\cA$, this implies that $rY_F^\circ$ is faithful.

    Now let $X$ and $Y$ be objects of $\cC$ and let
    \[ \xi \colon \quad rY_F^\circ(X) = \Hom_{\cC}\big( F(-) , X \big) \longrightarrow rY_F^\circ(Y) = \Hom_{\cC}\big( F(-) , Y \big) \]
    be a homomorphism in $\PSh(\cA)$ (i.e.\ a natural transformation).
    We claim that $\xi$ can be lifted to a natural transformation
    \[ \hat \xi \colon \Hom_\cC(-,X) \longrightarrow \Hom_\cC( - , Y ) \]
    such that $\hat \xi_T = \xi_T$ for all tilting objects $T$ of $\cC$.
    To that end, observe that the functor
    \[ \Hom_\cC( - , X ) = \Ext_\cC^0( - , X ) \colon \quad \cC \longrightarrow \Vect_\kk^\mathrm{op} \]
    can be recovered from $rY_F^\circ(X)$ as the composition of the functors
    \[ \cC \xrightarrow{~\imath~} D^-(\cC) \simeq K^-\big( \Proj(\cC) \big) \xrightarrow{~ r \tilde Y_F(X) ~} K^-( \Vect_\kk^\mathrm{op} ) \longrightarrow D^-( \Vect_\kk^\mathrm{op} ) \xrightarrow{H^0(-)} \Vect_\kk^\mathrm{op} , \]
    where 
    \begin{itemize}
        \item $\imath$ denotes the canonical embedding of $\cC$ into the bounded above derived category $D^-(\cC)$,
        \item the equivalence $D^-(\cC) \simeq K^-\big( \Proj(\cC) \big)$ is the canonical one (see \cite[Theorem 10.4.8]{WeibelHomAlg}),
        \item the functor $r \tilde Y_F(X)$ is induced by the restriction to $\Proj(\cC)$ of $rY_F(X) \colon \cA \to \Vect_\kk^\mathrm{op}$,
        \item $H^0(-)$ denotes taking cohomology in degree zero.
    \end{itemize}
    Analogously, the functor $\Hom_\cC(-,Y)$ can be recovered from $rY_F(Y)$, and since $\xi$ canonically lifts to a natural transformation between the functors $r \tilde Y_F(X)$ and $r \tilde Y_F(Y)$, we see that $\xi$ also induces a natural transformation
    \[ \hat \xi \colon \Hom_\cC(-,X) \longrightarrow \Hom_\cC( - , Y ) \]
    such that $\hat \xi_P = \xi_P$ for all projective objects $P$ of $\cC$.
    For an arbitrary tilting object $T$ of $\cC$, we can choose an exact sequence $P' \to P \to T \to 0$ with $P$ and $P'$ projective, and using the corresponding exact sequences of $\Hom$-spaces and the universal property of the kernel, we further obtain $\hat \xi_T = \xi_T$.
    Now by the Yoneda lemma, there is a homomorphism $f \colon X \to Y$ such that $\hat \xi = Y_\cC(f)$, and consequently $\xi = rY_F^\circ(f)$.
    We conclude that $rY_F^\circ$ is full, as required.
\end{proof}

\subsection{Upper to lower Ringel duality and exceptional sequences}
\label{subsec:Ringeldualityexceptionalsequences}

Let $\mathcal{D}$ be an upper finite highest weight category with weight datum $(\Lambda,\leq,L')$.
Then by Theorem 4.27 and Corollary 4.30 in \cite{BrundanStroppelSemiInfiniteHighestWeight}, there exists a lower finite highest weight category $\cC = \prescript{\vee}{}{\mathcal{D}}$ with weight datum $(\Lambda,\leq^\mathrm{op},L)$ such that $\mathcal{D} \simeq \cC^\vee$ is the Ringel dual of $\cC$ (in the sense of Subsection~\ref{subsec:tiltingobjectsRingelduality}).
The category $\cC$ is defined explicitly as a category of finite-dimensional comodules over the coend coalgebra of a tilting generator for $\mathcal{D}$ (with possibly infinite standard and costandard filtrations) in \cite[Definition 4.26]{BrundanStroppelSemiInfiniteHighestWeight}.
For the purposes of this article, the explicit choices required for the construction from \cite{BrundanStroppelSemiInfiniteHighestWeight} can somewhat obscure the interactions between Ringel duality and additional structures that may be present on $\mathcal{D}$ (such as a monoidal structure, for instance).
In this subsection, we discuss how $\cC = \prescript{\vee}{}{\mathcal{D}}$ can be constructed from $\mathcal{D}$ without choosing a tilting generator.

The first important observation for our construction is that (for any highest weight category $\cC$), the canonical functor
\begin{equation} \label{eq:tiltingequivalence}
	K^b\big( \Tilt(\cC) \big) \longrightarrow D^b(\cC)
\end{equation}
from the bounded homotopy category of $\Tilt(\cC)$ to the bounded derived category of $\cC$ is an equivalence of categories (cf.\ \cite[Proposition 7.17]{RicheHabilitation}).
If $\mathcal{D} = \cC^\vee$ is the Ringel dual of $\cC$, then we have $\Tilt(\cC) \simeq \Proj^\mathrm{fg}(\mathcal{D})$ by \Cref{thm:Ringelduality}, and so the natural $t$-structure on $D^b(\cC)$ (with heart $\cC$) corresponds to a $t$-structure on $K^b\big( \Proj^\mathrm{fg}(\mathcal{D}) \big)$ (with heart $\cC$) under the equivalence
\[ D^b(\cC) \simeq K^b\big( \Tilt(\cC) \big) \simeq K^b\big( \Proj^\mathrm{fg}(\mathcal{D}) \big) . \]
The second important observation is that the costandard objects of $\cC$ form an \emph{exceptional collection} (see \Cref{def:exceptionalcollection} below) in $D^b(\cC)$, which uniquely determines the natural $t$-structure.
As costandard objects in $\cC$ correspond to standard objects in $\mathcal{D}$ under Ringel duality, the standard objects in $\mathcal{D}$ form an exceptional collection in $K^b\big( \Proj^\mathrm{fg}(\mathcal{D}) \big)$ such that the heart of the corresponding $t$-structure is $\cC$.

In order to make this construction precise, we first recall some results (presumably well known to experts) about exceptional collections in triangulated categories, following \cite{BezrukavnikovQuasiExceptional}.
Let $\cT$ be $\kk$-linear triangulated category with shift functor $X \mapsto X[1]$, and for objects $X$ and $Y$ of $\cT$, consider the $\Z$-graded $\kk$-vector space
\[ \Hom_\cT^\bullet(X,Y) = \bigoplus_{k \in \Z} \Hom_\cT(X,Y[k]) . \]
We say that $\cT$ is of \emph{finite type} if $\Hom_\cT^\bullet(X,Y)$ is finite-dimensional for all objects $X$ and $Y$ of $\cT$.
For a set of objects $\mathcal{X} \subseteq \mathrm{Ob}(\cT)$, we write $\langle \mathcal{X} \rangle$ for the full triangulated subcategory of $\cT$ generated by $\mathcal{X}$ (under shifts and extensions) and $\langle \mathcal{X} \rangle_\mathrm{ext}$ for the full subcategory of $\cT$ generated by $\mathcal{X}$ under extensions.

\begin{Definition}
\label{def:exceptionalcollection}
    An \emph{exceptional collection} in a $\kk$-linear triangulated category $\cT$ is a collection of objects $\{ Y_\lambda \mid \lambda \in \Lambda \}$, indexed by a partially ordered set $(\Lambda,\leq)$, such that for all $\lambda,\mu \in \Lambda$, we have
    \begin{enumerate}
        \item $\Hom_\cT^\bullet(Y_\lambda,Y_\mu) = 0$ unless $\lambda \geq \mu$;
        \item $\Hom_\cT^\bullet( Y_\lambda , Y_\lambda ) \cong \kk$.
    \end{enumerate}
    Given an exceptional collection $\{ Y_\lambda \mid \lambda \in \Lambda \}$ in $\cT$, we write $\cT_{<\lambda} = \langle Y_\mu \mid \mu \in \Lambda , \mu < \lambda \rangle$.
    A collection of objects $\{ X_\lambda \mid \lambda \in \Lambda \}$ of $\cT$ is called a \emph{dual collection} of $\{ Y_\lambda \mid \lambda \in \Lambda \}$ if we have
    \begin{enumerate}
        \item $\Hom_\cT^\bullet( X_\lambda , Y_\mu ) = 0$ if $\lambda>\mu$;
        \item $X_\lambda$ and $Y_\lambda$ are isomorphic in the Verdier quotient $\cT / \cT_{<\lambda}$.
    \end{enumerate}
\end{Definition}

The dual collection of an exceptional collection is uniquely determined up to isomorphism if it exists, by the following result from Lemma 2(e) in \cite{BezrukavnikovQuasiExceptional}.

\begin{Lemma} \label{lem:dualcollectionunique}
	Let $\cT$ be a $\kk$-linear triangulated category and let $\{ Y_\lambda \mid \lambda \in \Lambda \}$ be an exceptional collection in $\cT$, indexed by a poset $(\Lambda,\leq)$.
	If a dual collection $\{ X_\lambda \mid \lambda \in \Lambda \}$ of $\{ Y_\lambda \mid \lambda \in \Lambda \}$ exists, then it is uniquely determined up to isomorphism.
\end{Lemma}

\begin{Remark} \label{rem:dualcollection}
	Let $\cT$ be a $\kk$-linear triangulated category of finite type and let $\{ Y_\lambda \mid \lambda \in \Lambda \}$ be an exceptional collection in $\cT$, indexed by a \emph{lower finite} poset $(\Lambda,\leq)$.
	Then a dual collection $\{ X_\lambda \mid \lambda \in \Lambda \}$ of $\{ Y_\lambda \mid \lambda \in \Lambda \}$ exists and can be constructed via \emph{mutation} of exceptional collections as explained in \cite[Sections 2--3]{BondalAssociativeAlgebrasSheaves}.
	Indeed, for all $\lambda \in \Lambda$, we can fix a total order $\preceq$ on the finite set $\{ \mu \in \Lambda \mid \mu < \lambda \}$ refining the partial order $\leq$, and the objects $\{ Y_\mu \mid \mu \in \Lambda , \mu \leq \lambda \}$ still form an exceptional collection with respect to $\preceq$.
	By Theorem 3.2 in \cite{BondalAssociativeAlgebrasSheaves}, the subcategory $\cT_{<\lambda}$ of $\cT$ is \emph{admissible}, that is, the embedding functor $\cT_{<\lambda} \to \cT$ admits a left adjoint and a right adjoint.
	Let us write $\prescript{\perp}{}{\cT}_{<\lambda}$ for the left-orthogonal complement of $\cT_{<\lambda}$ (i.e.\ the full subcategory of $\cT$ whose objects are the objects $X$ of $\cT$ such that $\Hom_\cT(X,Y) = 0$ for all objects $Y$ of $\cT_{<\lambda}$), and note that the embedding functor $\prescript{\perp}{}{\cT}_{<\lambda} \to \cT$ also admits a right adjoint $R_{<\lambda} \colon \cT \to \prescript{\perp}{}{\cT}_{<\lambda}$ by Lemma 3.1 in \cite{BondalAssociativeAlgebrasSheaves}.
	Now we can define
	\[ X_\lambda = R_{<\lambda} Y_\lambda . \]
	Indeed, the orthogonality property (1) is ovious from the construction.
	Furthermore, the cone of the adjunction counit $R_{<\lambda} X \to X$ belongs to $(\prescript{\perp}{}{\cT}_{<\lambda})^\perp = \cT_{<\lambda}$ (see Lemma 1.7 in \cite{BondalKapranovRepresentableFunctors}), and it follows that the adjunction counit $X_\lambda = j^!_{<\lambda} Y_\lambda \to Y_\lambda$ descends to an isomorphism in $\cT / \cT_{<\lambda}$.
\end{Remark}

Now let us return to the setup of highest weight categories.

\begin{Lemma} \label{lem:exceptionalcollectionlowerfinite}
	Let $\cC$ be lower finite highest weight category with weight datum $(\Lambda,\leq,L)$.
	\begin{enumerate}
		\item The costandard objects $\{ \nabla(\lambda) \mid \lambda \in \Lambda \}$ form an exceptional collection in $\cT = D^b(\cC)$ with respect to $(\Lambda,\leq)$, and the standard objects $\{ \Delta(\lambda) \mid \lambda \in \Lambda \}$ form a dual collection.
		\item The natural $t$-structure $(\cT^{\leq 0},\cT^{\geq 0})$ on $\cT = D^b(\cC)$ with heart $\cC$ can be described in terms of the standard objects and costandard objects via
		\begin{align*}
			\cT^{\geq 0} & = \big\langle \nabla(\lambda)[d] \mathrel{\big|} \lambda \in \Lambda , d \leq 0 \big\rangle_\mathrm{ext} , \\
			\cT^{\leq 0} & = \big\langle \Delta(\lambda)[d] \mathrel{\big|} \lambda \in \Lambda , d \geq 0 \big\rangle_\mathrm{ext} .
		\end{align*}
	\end{enumerate}
\end{Lemma}
\begin{proof}
	The costandard objects $\{ \nabla(\lambda) \mid \lambda \in \Lambda \}$ form an exceptional collection in $\cT$ by \Cref{lem:Extvanishingexceptional}.
	Furthermore, we have $\Hom_\cT^\bullet\big( \Delta(\lambda) , \nabla(\mu) \big) = 0$ for $\lambda,\mu \in \Lambda$ with $\lambda \neq \mu$ by \Cref{lem:Extvanishingstandardcostandard}, and using the lower finiteness of $(\Lambda,\leq)$ it is straightforward to see that
	\[ \cT_{< \lambda} = \langle \nabla(\mu) \mid \mu \in \Lambda, \mu < \Lambda \rangle = \langle L(\mu) \mid \mu \in \Lambda, \mu < \Lambda \rangle , \]
	hence $\Delta(\lambda) \cong L(\lambda) \cong \nabla(\lambda)$ in the Verdier quotient $\cT / \cT_{<\lambda}$.
	This shows that the standard objects form a dual collection of the costandard objects.
	
	In order to prove (2), recall that $\cT^{\geq 0}$ is the full subcategory of $\cT$ whose objects are the bounded complexes $X$ in $\cC$ such that $H^i(X) = 0$ for all $i<0$.
	The shifted complexes $\nabla(\lambda)[d]$ obviously satisfy this property for $\lambda \in \Lambda$ and $d \leq 0$, and so the full subcategory $\langle \nabla(\lambda)[d] \mathrel{\big|} \lambda \in \Lambda , d \leq 0 \rangle_\mathrm{ext}$ is contained in $\cT^{\geq 0}$.
	For the reverse inclusion, it suffices to show that every object $M$ of $\cC$ admits a finite resolution $0 \to M \to M_0 \to \cdots \to M_r \to 0$ such that $M_i$ has a costandard filtration for $i = 0 , \ldots , r$.
	The existence of such a resolution is a well-known property of lower finite highest weight categories; see for instance Definition 2.2 and Lemma 2.9 in \cite{ParkerGFD}.
	The claim about $\cT^{\leq 0}$ can be proven analogously. 
\end{proof}

In order to check whether an object of $\cT = D^b(\cC)$ belongs to the co-aisle $\cT^{\geq 0}$ of the $t$-structure defined in Proposition \ref{lem:exceptionalcollectionlowerfinite}, we can use the following result, whose proof is completely analogous to \cite[Proposition 3.2]{GruberMinimalTilting}.

\begin{Proposition}
\label{prop:costandardgeneratedegrees}
    Let $\cC$ be a lower finite highest weight category with weight poset $(\Lambda,\leq)$, and let $X$ be a complex in $\cC$.
    Then there is sequences $n_0,\ldots,n_r$ of integers, $\lambda_0,\ldots,\lambda_r \in \Lambda$ of weights and $X_0,\ldots,X_{r+1}$ of complexes in $\cC$, such that $X_{r+1} \cong 0$ in $D^b(\cC)$, there are distinguished triangles
    \[ \nabla(\lambda_i)[n_i] \longrightarrow X_i \longrightarrow X_{i+1} \longrightarrow \nabla(\lambda_i)[i+1] \]
    in $D^b(\cC)$ for $i=0,\ldots,r$, and we have
    \[ \sum_{i=0}^r \delta_{\lambda,\lambda_i} \cdot v^{n_i} = \sum_{i \in \Z} \dim \Hom_{D^b(\cC)}\big( \Delta(\lambda) , X[i] \big) \cdot v^i . \]
    In particular, the complex $X$ belongs to the subcategory $\big\langle \nabla(\lambda)[d] \mathop{\big|} \lambda \in \Lambda , d \leq 0 \big\rangle_\mathrm{ext}$ of $D^b(\cC)$ if and only if $\Hom_{D^b(\cC)}\big( \Delta(\lambda) , X[i] \big) = 0$ for all $\lambda \in \Lambda$ and $i>0$.
\end{Proposition}

\begin{Definition}
\label{def:orderpreservingduality}
    Let $\cA$ be a $\kk$-linear Krull--Schmidt category with a poset $(\Lambda,\leq)$ and a map
    $\Lambda \to \mathrm{Ob}(\cA) , ~ \lambda \mapsto X_\lambda$
    indexing the isomorphism classes of indecomposable objects in $\cA$.
    An \emph{order preserving duality} on $\cA$ is a $\kk$-linear equivalence $\tau \colon \cA \to \cA^\mathrm{op}$ with an order automorphism $\sigma \colon \Lambda \to \Lambda$ such that $\tau (X_\lambda) \cong X_{\sigma(\lambda)}$ for all $\lambda \in \Lambda$.
\end{Definition}

If $\mathcal{D}$ is an upper finite highest weight category and $\cA = \Proj(\mathcal{D})$, or if $\cC$ is a lower finite highest weigh category and $\cA = \Tilt(\cC)$, then clearly the isomorphism classes of indecomposable objects in $\cA$ are canonically indexed by the weight poset.
In that case, we will refer to order-preserving dualities without explicit reference to the poset.

\begin{Theorem}
\label{thm:Ringeldualityexceptioncollection}
	Let $\mathcal{D}$ be an upper finite highest weight category with weight poset $(\Lambda,\leq)$.
    Further let $\cA = \Proj^\mathrm{fg}(\mathcal{D})$ and consider the triangulated category $\cT = K^b(\cA)$.
	\begin{enumerate}
		\item For all $\lambda \in \Lambda$, fix a finite projective resolution
	\[ \cdots \to P_{-2}^\lambda \to P_{-1}^\lambda \to P_0^\lambda \to \Delta_\mathcal{D}(\lambda) \to 0 . \]
	Then the complexes
	\[ Y_\lambda \coloneqq \big( \cdots \to P_{-2}^\lambda \to P_{-1}^\lambda \to P_0^\lambda \to 0 \big) \]
	form a full exceptional collection in $\cT$ with respect to the opposite poset $(\Lambda,\leq^\mathrm{op})$.
		\item The exceptional collection $\{ Y_\lambda \mid \lambda \in \Lambda \}$ admits a dual collection $\{ X_\lambda \mid \lambda \in \Lambda \}$.
		\item There is a $t$-structure $(\cT^{\leq 0},\cT^{\geq 0})$ on $\cT$ such that
		\begin{align*}
			\cT^{\geq 0} & = \big\langle Y_\lambda[d] \mathrel{\big|} \lambda \in \Lambda , d \leq 0 \big\rangle_\mathrm{ext} , \\
			\cT^{\leq 0} & = \big\langle X_\lambda[d] \mathrel{\big|} \lambda \in \Lambda , d \geq 0 \big\rangle_\mathrm{ext} .
		\end{align*}
		\item The heart $\cC = \cT^{\leq 0} \cap \cT^{\geq 0}$ of the $t$-structure $(\cT^{\leq 0},\cT^{\geq 0})$ is a lower finite highest weight category with weight poset $(\Lambda,\leq^\mathrm{op})$.
        We have $\Tilt(\cC) = \cA = \Proj^\mathrm{fg}(\mathcal{D})$, the subcategory of $\cT = K^b(\cA)$ of complexes concentrated in degree zero.
        Furthermore, the standard objects, costandard objects and tilting objects in $\cC$ are given by
        \[ \Delta_\cC(\lambda) = X_\lambda , \qquad \nabla_\cC(\lambda) = Y_\lambda , \qquad T_\cC(\lambda) = P_\mathcal{D}(\lambda) \]
        for $\lambda \in \Lambda$.
        \item Let $\cC^\vee$ be the Ringel dual of $\cC$ and let $R = R_\cC \colon \cC \to \cC^\vee$ be the Ringel duality functor.
        Then there is an equivalence $\psi \colon \cC^\vee \xrightarrow{~\sim~} \mathcal{D}$ such that $\psi L^\vee(\lambda) \cong L_\mathcal{D}(\lambda)$ for all $\lambda \in \Lambda$ and such that there is a natural isomorphism
        \[ H^0(-) \circ \imath \cong \psi \circ R , \]
        where  $\imath \colon \cC \to \cT$ is the canonical embedding and $H^0(-) \colon \cT = K^b\big( \Proj^\mathrm{fg}(\mathcal{D}) \big) \to \mathcal{D}$ is the degree zero cohomology functor.
        \item If there is an order-preserving duality $\tau \colon \cA \to \cA^\mathrm{op}$, then the induced equivalence on $\cT \simeq \cT^\mathrm{op}$ restricts to an equivalence $\cC \simeq \cC^\mathrm{op}$, which in turn restricts to the duality $\tau$ on $\Tilt(\cC) = \cA$.
	\end{enumerate}
\end{Theorem}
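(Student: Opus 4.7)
The plan is to first establish the exceptional collection structure for the $Y_\lambda$ in $\cT = K^b(\cA)$, then derive the $t$-structure from the general formalism of exceptional sequences, and finally identify the heart with $\prescript{\vee}{}{\mathcal{D}}$ via the presheaf description of Ringel duality from \Cref{prop:Ringeldualpresheaves}.

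For part (1), I would exploit the fact that the canonical functor $K^b(\cA) \to D^b(\mathcal{D})$ is fully faithful on Hom-spaces between bounded complexes of projectives, and that each $Y_\lambda$ is a bounded projective resolution of $\Delta_\mathcal{D}(\lambda)$ (available by \Cref{lem:projectiveresolutionDelta}). This gives
\[ \Hom_\cT^\bullet(Y_\lambda, Y_\mu) \cong \Ext_\mathcal{D}^\bullet\bigl(\Delta(\lambda), \Delta(\mu)\bigr) , \]
and \Cref{lem:Extvanishingexceptional}(2) immediately yields the exceptional axioms with respect to $(\Lambda, \leq^\mathrm{op})$. For fullness, I would induct on the finite upper set $\{\mu : \mu \geq \lambda\}$: the top term of $Y_\lambda$ is $P_\mathcal{D}(\lambda)$ and all lower terms are direct sums of $P_\mathcal{D}(\mu)$ with $\mu > \lambda$, so $P_\mathcal{D}(\lambda) \in \langle Y_\mu \mid \mu \geq \lambda\rangle$; since $\cA$ generates $\cT$ as a triangulated category, the $Y_\lambda$ do too. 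Part (2) follows because $\cT$ is of finite type (finite-dimensionality of $\Hom_\mathcal{D}(P(\lambda), P(\mu))$ comes from the locally finite-dimensional locally unital algebra description of $\mathcal{D}$, and complexes in $\cT$ are bounded), so the dual collection $\{X_\lambda\}$ exists by \Cref{rem:dualcollection}; part (3) is then the $t$-structure canonically associated to a full exceptional collection together with its dual collection.

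The main work lies in part (4). I would first show that the canonical map $X_\lambda \to Y_\lambda$ (witnessing the isomorphism $X_\lambda \cong Y_\lambda$ in $\cT/\cT_{<\lambda}$) factors through a simple object $L_\cC(\lambda)$ of the heart, that the $L_\cC(\lambda)$ exhaust the isomorphism classes of simples in $\cC$, and that $(\Lambda, \leq^\mathrm{op}, L_\cC)$ is an admissible weight datum, using \Cref{prop:costandardgeneratedegrees} for the latter. The $\Ext^1,\Ext^2$-vanishing characterization of lower finite highest weight categories then applies using the dual orthogonality of $(X_\lambda, Y_\mu)$ in $\cT$. For $\Tilt(\cC) = \cA$, the condition $(P\Delta)$ for $\mathcal{D}$ together with a Ringel-style dual argument produce both a standard filtration (by $X_\mu$'s) and a costandard filtration (by $Y_\mu$'s) on $P_\mathcal{D}(\lambda)$ viewed as an object of $\cC$, so $P_\mathcal{D}(\lambda)$ is tilting and highest-weight matching forces $T_\cC(\lambda) \cong P_\mathcal{D}(\lambda)$.

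For (5), with $\Tilt(\cC) = \cA = \Proj^\mathrm{fg}(\mathcal{D})$ matched compatibly on indecomposables, \Cref{rem:tiltingequivalentprojectiveimpliesRingeldual} gives the equivalence $\psi \colon \cC^\vee \simeq \mathcal{D}$ with $\psi L^\vee(\lambda) \cong L_\mathcal{D}(\lambda)$. The natural isomorphism $\psi \circ R \cong H^0 \circ \imath$ reduces to the computation $\Hom_{K^b(\cA)}(P, c) \cong \Hom_\mathcal{D}(P, H^0(c))$ for $P$ projective in $\mathcal{D}$ and $c$ a bounded complex of projectives in the heart, which holds because $\Hom_\mathcal{D}(P, -)$ is exact and hence commutes with taking cohomology. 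For (6), termwise application of $\tau$ gives $K^b(\tau) \colon \cT \to \cT^\mathrm{op}$, which respects the defining objects of the $t$-structure (sending $Y_\lambda$ and $X_\lambda$ to the corresponding objects associated with $\mathcal{D}^\mathrm{op}$, re-indexed by $\sigma$), and therefore restricts to an equivalence $\cC \simeq \cC^\mathrm{op}$ that recovers $\tau$ on $\Tilt(\cC) = \cA$. The main obstacle I anticipate is part (4), where verifying the full lower finite highest weight structure on the heart will require careful bookkeeping of filtrations and multiplicities, together with a Ringel-style duality argument to produce the costandard filtrations of the $P_\mathcal{D}(\lambda)$ in $\cC$.
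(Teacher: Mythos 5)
For parts (1), (2), and (5), your approach is sound: part (1) matches the paper's argument exactly, part (2) likewise, and your direct computation $\Hom_{K^b(\cA)}(P, c) \cong \Hom_\mathcal{D}(P, H^0(c))$ for $P$ projective, using exactness of $\Hom_\mathcal{D}(P,-)$, is in fact cleaner than the paper's derived-functor argument via injectivity of $\cC_\nabla$ in the Kashiwara--Schapira sense.

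However, there is a genuine gap in your plan for parts (3) and (4), which you yourself flag as ``the main obstacle.'' You propose to verify from scratch that the heart is a lower finite highest weight category: constructing the simples $L_\cC(\lambda)$, checking admissibility of the weight datum, invoking the $\Ext^{1,2}$-vanishing characterization, and producing standard and costandard filtrations of $P_\mathcal{D}(\lambda)$ in $\cC$ via ``a Ringel-style dual argument.'' This is a substantial amount of work left unspecified, and two particular obstacles arise. First, your appeal in part (3) to ``the $t$-structure canonically associated to a full exceptional collection together with its dual collection'' requires the general theory of exceptional collections to apply when $\Lambda$ is an infinite lower finite poset; the standard references (Bezrukavnikov, Bondal) are stated for finite or countable collections and transporting them to this setting is not automatic. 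Second, the ``Ringel-style dual argument'' producing $\Delta$- and $\nabla$-filtrations of $P_\mathcal{D}(\lambda)$ in the heart is precisely the hard content and is not sketched. The paper sidesteps all of this with a single move you are not using: by Brundan--Stroppel's Corollary 4.30, the lower finite highest weight category $\cC_0 = \prescript{\vee}{}{\mathcal{D}}$ already exists with $\mathcal{D} \simeq \cC_0^\vee$, and the opposite Ringel duality functor $L_{\cC_0}$ restricted to $\Proj^\mathrm{fg}(\mathcal{D}) \simeq \Tilt(\cC_0)$ induces an exact triangulated equivalence $\mathfrak{L}\colon K^b(\cA) \to D^b(\cC_0)$ sending $Y_\lambda$ to $\nabla_{\cC_0}(\lambda)$; by uniqueness of dual collections $\mathfrak{L} X_\lambda \cong \Delta_{\cC_0}(\lambda)$, and now the $t$-structure and the entire highest weight structure on the heart are transported from \Cref{lem:exceptionalcollectionlowerfinite} for $D^b(\cC_0)$. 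This both proves (3) and (4) in a few lines and avoids the infinite-poset technicalities.

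For part (6), you assert that $K^b(\tau)$ ``respects the defining objects of the $t$-structure (sending $Y_\lambda$ and $X_\lambda$ to the corresponding objects, re-indexed by $\sigma$),'' but this is exactly the nontrivial claim: one must show $\tau Y_\lambda \cong X_{\sigma\lambda}$, which the paper establishes by choosing the projective resolutions so that $Y_\lambda$ has top term $P_\mathcal{D}(\lambda)$ and lower terms involving $P_\mathcal{D}(\mu)$ with $\mu > \lambda$, checking that $\{\tau Y_\lambda\}$ satisfies the orthogonality and quotient conditions defining a dual collection, and concluding by uniqueness (\Cref{lem:dualcollectionunique}). Without this verification the claim that $\tau$ respects the $t$-structure is unsubstantiated.
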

\begin{proof}
	Part (1) of the proposition follows from \Cref{lem:Extvanishingexceptional} and the fact that the canonical functor from $\cT = K^b\big( \Proj(\mathcal{D}) \big)$ to $D^b(\mathcal{D})$ is fully faithful, cf.\ \cite[Corollary 10.4.7]{WeibelHomAlg}.
    Part (2) follows from \Cref{rem:dualcollection}.
	In order to prove parts (3) and (4), recall that by Corollary 4.30 in \cite{BrundanStroppelSemiInfiniteHighestWeight}, there is a lower finite highest weight category $\cC_0$ with weight poset $(\Lambda,\leq^\mathrm{op})$ such that $\mathcal{D} \simeq \cC_0^\vee$ is equivalent to the Ringel dual of $\cC_0$.
	The opposite Ringel duality functor $L = L_{\cC_0} \colon \mathcal{D}^\mathrm{cc} \to \cC_0^\mathrm{cc}$ restricts to an equivalence $\Proj^\mathrm{fg}(\mathcal{D}) \simeq \Tilt(\cC_0)$ and therefore induces an exact equivalence
	\[ \mathfrak{L} \colon \cT = K^b\big( \Proj^\mathrm{fg}(\mathcal{D}) \big) \longrightarrow K^b\big( \Tilt(\cC_0) \big) \simeq D^b(\cC_0) \]
	via the equivalence in \eqref{eq:tiltingequivalence}.
	As $L$ sends exact sequences in $\mathcal{D}_\Delta$ to exact sequences in $(\cC_0)_\nabla$, a projective resolution of $\Delta_\mathcal{D}(\lambda)$ in $\mathcal{D}$ is sent by $L$ to a resolution of $\nabla_{\cC_0}(\lambda) \cong L\Delta_\mathcal{D}(\lambda)$ by tilting objects in $\cC_0$ for all $\lambda \in \Lambda$, and it follows that
    \[ \mathfrak{L} Y_\lambda \cong \nabla_{\cC_0}(\lambda) \]
    in $D^b(\cC_0)$.
	Now \Cref{lem:exceptionalcollectionlowerfinite} implies that the costandard objects $\{ \nabla_{\cC_0}(\lambda) \mid \lambda \in \Lambda \}$ form an exceptional collection in $D^b(\cC_0)$ with respect to $(\Lambda,\leq^\mathrm{op})$, with dual collection given by $\{ \Delta_{\cC_0}(\lambda) \mid \lambda \in \Lambda \}$, and by uniqueness of the dual collection (see \Cref{lem:dualcollectionunique}) it follows that
    \[ \mathfrak{L} X_\lambda \cong \Delta_{\cC_0}(\lambda) \]
    for all $\lambda \in \Lambda$.
	Again by \Cref{lem:exceptionalcollectionlowerfinite}, the natural $t$-structure on $D^b(\cC_0)$ with heart $\cC_0$ is given by
	\begin{align*}
			D^b(\cC_0)^{\geq 0} & = \big\langle \nabla_{\cC_0}(\lambda)[d] \mathrel{\big|} \lambda \in \Lambda , d \leq 0 \big\rangle_\mathrm{ext} = \big\langle \mathfrak{L} Y_\lambda [d] \mathrel{\big|} \lambda \in \Lambda , d \leq 0 \big\rangle_\mathrm{ext} , \\
			D^b(\cC_0)^{\leq 0} & = \big\langle \Delta_{\cC_0}(\lambda)[d] \mathrel{\big|} \lambda \in \Lambda , d \geq 0 \big\rangle_\mathrm{ext} = \big\langle \mathfrak{L} X_\lambda [d] \mathrel{\big|} \lambda \in \Lambda , d \geq 0 \big\rangle_\mathrm{ext} ,
	\end{align*}
	and the claims (3)--(4) follow because $\mathfrak{L} \colon \cT \to D^b(\cC_0)$ is an exact equivalence.

    In order to prove (5), note that by \Cref{rem:tiltingequivalentprojectiveimpliesRingeldual}, the equality $\Tilt(\cC) = \cA = \Proj^\mathrm{fg}(\cC)$ gives rise to an equivalence $\psi \colon \cC^\vee \to \mathcal{D}$ such that $\psi L^\vee(\lambda) \cong L_\mathcal{D}(\lambda)$ for all $\lambda \in \Lambda$ and there is a natural isomorphism
    \[ \psi \circ R\vert_\cA \cong \mathrm{id}_\cA \]
    of functors from $\cA = \Tilt(\cC)$ to $\cA = \Proj^\mathrm{fg}(\mathcal{D})$.
    In order to show that $\psi \circ R \cong H^0(-) \circ \imath$, observe that the subcategory $\cC_\nabla$ of $\cC$ is injective with respect to $\mathcal{R} \colon \cC \to \cC^\vee$ in the sense of \cite[Definition 13.3.4]{KashiwaraSchapiraCategoriesSheaves}, by \cite[Corollary 13.3.8]{KashiwaraSchapiraCategoriesSheaves} together with \cite[Corollary 3.58]{BrundanStroppelSemiInfiniteHighestWeight} and \Cref{thm:Ringelduality}.
    By \cite[Proposition 13.3.5]{KashiwaraSchapiraCategoriesSheaves}, this implies that the composition
    \[ \mathbb{R}R \colon D^b( \cC ) \xrightarrow{ ~ \eqref{eq:tiltingequivalence} ~ } K^b(\cA) \xrightarrow{~\hat R~} K^b\big( \Proj^\mathrm{fg}(\cC^\vee) \big) \to D^b( \cC^\vee )  \]
    is the total derived functor of $R$, where $\hat R$ is induced by the restriction of $R$ to $\cA = \Tilt(\cC)$.
    In particular, we have $R \cong H^0(-) \circ \hat R \circ \imath$ (the zeroth right derived functor of $R$).
    As $\psi \circ R\vert_\cA \cong \id_\cA$, this implies that $\psi \circ R \cong H^0(-) \circ \imath$, as claimed.
    
    Finally, suppose that $\cA = \Proj^\mathrm{fg}(\mathcal{D})$ admits an order-preserving duality $\tau$, and let $\sigma \colon \Lambda \to \Lambda$ be the order automorphism with $\tau P_\mathcal{D}(\lambda) \cong P_\mathcal{D}(\sigma\lambda)$.
    Then $\tau$ lifts to a duality $\tau \colon K^b(\cA) \to K^b(\cA)^\mathrm{op}$ with
    \[ \tau\big( \cdots \to P_i \to P_{i+1} \to \cdots \big) = \big( \cdots \to \tau(P_{i+1}) \to \tau(P_i) \to \cdots  \big) , \]
    where $\tau(P_i)$ is in homological degree $-i$, and we claim that $X_{\sigma\lambda} \cong \tau Y_\lambda$ for all $\lambda \in \Lambda$.
    
    In order to prove the claim, note that by \Cref{lem:projectiveresolutionDelta}, we may assume that $P_0^\lambda \cong P_\mathcal{D}(\lambda)$ and $P_i^\lambda$ is a direct sum of certain $P_\mathcal{D}(\mu)$ with $\mu > \lambda$ for $i<0$, possibly after replacing $Y_\lambda$ by a homotopy equivalent complex in $\cT = K^b(\cA)$.
    This implies that we have
    \[ \cT_{<^\mathrm{op}\lambda} = \langle Y_\mu \mid \mu <^\mathrm{op} \lambda \rangle = \langle P_\mathcal{D}(\mu) \mid \mu > \lambda \rangle , \]
    and therefore $Y_\lambda \cong P_\mathcal{D}(\lambda)$ in $\cT / \cT_{<^\mathrm{op}\lambda}$ and
    \[ \tau Y_\lambda \cong \tau P_\mathcal{D}(\lambda) \cong P_\mathcal{D}(\sigma\lambda) \cong Y_{\sigma\lambda} \qquad \text{in} \quad \cT / \cT_{<^\mathrm{op}\sigma\lambda} . \]
    Furthermore, we have
    \[ \Hom_\cT^\bullet( \tau Y_\lambda , Y_{\sigma \mu} ) \cong \Hom_\cT^\bullet( \tau^{-1} Y_{\sigma\mu} , Y_\lambda ) = 0 \qquad \text{for} \quad \lambda>^\mathrm{op}\mu \]
    because $\tau^{-1} Y_{\sigma\mu}$ belongs to $\cT_{<^\mathrm{op}\lambda}$ and because $\{ Y_\nu \mid \nu \in \Lambda \}$ is an exceptional collection in $\cT$ with respect to $(\Lambda,\leq^\mathrm{op})$.
    Now the uniqueness of the dual collection (\Cref{lem:dualcollectionunique}) implies that
    \[ X_{\sigma\lambda} \cong \tau Y_\lambda \]
    for all $\lambda \in \Lambda$, as claimed.
    As the complexes $\{ \tau Y_\lambda \mid \lambda \in \Lambda \} = \{ X_{\sigma\lambda} \mid \lambda \in \Lambda \}$ form an exceptional collection in $\cT^\mathrm{op}$ with two dual collections $\{ \tau X_\lambda \mid \lambda \in \Lambda \}$ and $\{ Y_{\sigma \lambda} \mid \lambda \in \Lambda \}$, we further conclude that
    \[ Y_{\sigma\lambda} \cong \tau X_\lambda  \]
    for all $\lambda \in \Lambda$.
    In particular, we have $\tau \cT^{\geq 0} = \cT^{\leq 0}$ and $\tau \cT^{\leq 0} = \cT^{\geq 0}$, hence $\tau$ restricts to a duality on $\cC = \cT^{\geq 0} \cap \cT^{\leq 0}$, as required.
\end{proof}

\section{Monoidal Ringel duality}
\label{sec:monoidalRingelduality}

In this section, we explain how Brundan--Stroppel's semi-infinite Ringel duality interacts with monoidal structures on highest weight categories.
We first explain how a monoidal structure on a lower finite highest weight category induces a monoidal structure on the Ringel dual upper finite highest weight category (Subsection~\ref{subsec:monoidalRingelduality_lowertoupper}) and then discuss how a monoidal structure on an upper finite highest weight category gives rise to a monoidal structure on the Ringel dual lower finite highest weight category (Subsection~\ref{subsec:monoidalRingelduality_uppertolower}).
In Subsection~\ref{subsec:abelian-envelopes}, we discuss monoidal abelian envelopes in the context of monoidal highest weight categories and prove that a lower finite highest weight category with a compatible monoidal structure is often a monoidal abelian envelope of its full subcategory of tilting objects.

\subsection{Lower to upper monoidal Ringel duality}
\label{subsec:monoidalRingelduality_lowertoupper}

The first main result of this section is the following theorem, which explains how a monoidal structure on a lower finite highest weight category gives rise to a monoidal structure on its Ringel dual.

\begin{Theorem}[Lower to upper monoidal Ringel duality]
\label{thm:monoidalRingelduality_lowertoupper}
	Let $\cC$ be a lower finite highest weight category with a monoidal structure $(\otimes,\mathbf{1},\ldots)$ such that $\cA = \Tilt(\cC)$ is a monoidal subcategory and $\otimes$ is right exact in both arguments.
	Then the following assertions hold.
	\begin{enumerate}
		\item The Ringel dual $\cC^{\vee,\mathrm{cc}}$ admits a unique (up to monoidal equivalence) monoidal structure $(\otimes^\vee,\mathbf{1}^\vee,\ldots)$ such that the embedding $R\vert_\cA \colon \cA \to \cC^{\vee,\mathrm{cc}}$ can be enhanced to a monoidal functor and $\otimes^\vee$ is cocontinuous in both arguments.
		\item The finitely generated objects $\cC^{\vee,\mathrm{fg}}$ form a monoidal subcategory of $\cC^{\vee,\mathrm{cc}}$, and $\Proj^\mathrm{fg}( \cC^\vee )$ is a monoidal subcategory of $\cC^{\vee,\mathrm{fg}}$.
        \item If $\cC$ is braided (or symmetric), then $\cC^{\vee,\mathrm{cc}}$ admits a canonical (symmetric) braiding, making the embedding $R\vert_\cA \colon \cA \to \cC^{\vee,\mathrm{cc}}$ a braided monoidal functor.
        \item If $\otimes$ is right exact in both arguments, then the opposite Ringel duality functor
        \[ L = L_\cC \colon \cC^{\vee,\mathrm{cc}} \longrightarrow \cC^\mathrm{cc} \]
        can be enhanced (uniquely up to monoidal natural isomorphism) to a monoidal functor.
        If $\cC$ is braided, then $L$ is a braided monoidal functor.
		\item All rigid objects in $\cC^{\vee,\mathrm{cc}}$ are projective.
		If $\cA$ is rigid, then $\Proj(\cC^\vee)$ is the subcategory of rigid objects in $\cC^{\vee,\mathrm{fg}}$.
	\end{enumerate}
\end{Theorem}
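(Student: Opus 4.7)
The plan is to reduce every assertion to a statement about the presheaf category $\PSh(\cA)$ equipped with the Day convolution tensor product, using the identification $\cC^{\vee,\mathrm{cc}} \simeq \PSh(\cA)$ furnished by \Cref{prop:Ringeldualpresheaves}. Under this equivalence $\psi$, the functor $R\vert_\cA$ corresponds to the Yoneda embedding $Y_\cA \colon \cA \to \PSh(\cA)$. For part (1), I would transport the Day convolution monoidal structure (\Cref{rem:Dayconvolution}) from $\PSh(\cA)$ to $\cC^{\vee,\mathrm{cc}}$; the uniqueness statement is then immediate from \Cref{rem:freemonoidalcocompletion}(1), which characterizes Day convolution as the unique cocontinuous monoidal structure on a presheaf category that makes the Yoneda embedding monoidal.

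Part (2) would be handled by combining the equivalence $\Kar(\cA) \simeq \Proj^\mathrm{fg}\big(\PSh(\cA)\big)$ of \Cref{lem:projectivepresheaves} with the observation that $\cA = \Tilt(\cC)$ is already idempotent complete (direct summands of tilting objects are tilting, as every tilting object is a finite direct sum of indecomposable tiltings $T(\lambda)$). Hence $\Proj^\mathrm{fg}(\cC^\vee) \simeq \cA$, and the hypothesis that $\cA$ is a monoidal subcategory of $\cC$ immediately gives closure under $\otimes^\vee$. For $\cC^{\vee,\mathrm{fg}}$, given finitely generated $M$ and $N$ with surjections $P\twoheadrightarrow M$ and $Q\twoheadrightarrow N$ from $\Proj^\mathrm{fg}(\cC^\vee)$, right exactness of $\otimes^\vee$ (which follows from cocontinuity) yields a surjection $P\otimes^\vee Q\twoheadrightarrow M\otimes^\vee N$ with finitely generated projective source. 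Part (3) is then immediate from \Cref{rem:Dayconvolutionbraided}. For part (4), I would use the identification $L \cong \hat F \circ \psi$ from \Cref{prop:Ringeldualpresheaves}, where $F \colon \cA \to \cC^\mathrm{cc}$ is the monoidal inclusion, together with \Cref{lem:freemonoidalcocompletion}: the hypothesis that $\otimes$ is right exact on $\cC$ ensures (via the discussion of ind-completions in Subsection~\ref{subsec:indcompletion}) that $\cC^\mathrm{cc}$ is monoidally cocomplete, so that the universal property applies and produces the desired monoidal enhancement of $\hat F$ and therefore of $L$. The braided case follows from \Cref{rem:freemonoidalcocompletion}(3).

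The main obstacle is part (5). For a rigid object $F$ of $\cC^{\vee,\mathrm{cc}}$ with dual $F^*$, the strategy is to exploit the natural isomorphism
\[ \Hom_{\cC^{\vee,\mathrm{cc}}}(F,-) \;\cong\; \Hom_{\cC^{\vee,\mathrm{cc}}}\!\bigl(\mathbf{1}^\vee,\, -\otimes^\vee F^*\bigr) \]
coming from the rigidity adjunction. The unit $\mathbf{1}^\vee \cong R(\mathbf{1})$ lies in $\Proj^\mathrm{fg}(\cC^\vee)$ (since $\mathbf{1}\in\cA$ is a tilting object), so $\Hom(\mathbf{1}^\vee,-)$ is exact and commutes with filtered colimits. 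Meanwhile, because $F^*$ is itself rigid (it acquires $F$ as a dual from the original rigidity data), the functor $-\otimes^\vee F^*$ is simultaneously a left and a right adjoint, hence both cocontinuous and exact. Composing the two shows that $F$ is compact projective. For the second sentence, when $\cA$ is rigid the duals of the tilting objects remain duals after the monoidal embedding into $\cC^{\vee,\mathrm{cc}}$, so every object of $\cA \simeq \Proj^\mathrm{fg}(\cC^\vee)$ is rigid, and combined with the preceding claim this identifies $\Proj(\cC^\vee)$ with the rigid objects of $\cC^{\vee,\mathrm{fg}}$. The delicate point is tracking the two-sided dual data carefully enough to extract exactness of $-\otimes^\vee F^*$, and I would spell out the unit/counit calculations to make this airtight.
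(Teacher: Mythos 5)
Your proposal is correct and follows essentially the same strategy as the paper: parts (1)--(4) all rest on identifying $\cC^{\vee,\mathrm{cc}}$ with $\PSh(\Tilt(\cC))$ equipped with Day convolution via \Cref{prop:Ringeldualpresheaves} and then invoking the universal properties in Remarks~\ref{rem:freemonoidalcocompletion} and \ref{rem:Dayconvolutionbraided}, exactly as you do. For part (5) there is one small inaccuracy worth fixing: $F^*$ need not be rigid (it has $F$ as a right dual, but a left dual $F^{**}$ is not guaranteed), so the assertion that $-\otimes^\vee F^*$ is ``simultaneously a left and right adjoint'' does not follow from what you have. The conclusion you actually need --- that $-\otimes^\vee F^*$ is exact and preserves filtered colimits --- is nevertheless true for a cleaner reason: cocontinuity of $\otimes^\vee$ (built into the theorem's hypotheses) gives right exactness and preservation of filtered colimits, while the adjunction $-\otimes^\vee F \dashv -\otimes^\vee F^*$ coming from rigidity of $F$ gives left exactness. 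The paper packages the same idea more compactly as the observation that the tensor product of a projective object by a rigid object is always projective, applied to $x \cong \mathbf{1}^\vee \otimes^\vee x$; both routes are fine.
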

\begin{proof}
    By \Cref{prop:Ringeldualpresheaves}, we can identify the Ringel dual $\cC^{\vee,\mathrm{cc}}$ with the presheaf category $\PSh(\cA)$.
    Then the embedding $R\vert_\cA \colon \cA \to \cC^{\vee,\mathrm{cc}}$ identifies with the Yoneda embedding $Y_\cA \colon \cA \to \PSh(\cA)$ and the opposite Ringel duality functor $L \colon \cC^{\vee,\mathrm{cc}} \to \cC^\mathrm{cc}$ identifies with the Yoneda extension $\hat F \colon \PSh(\cA) \to \cC^\mathrm{cc}$ of the embedding $F \colon \cA \to \cC^\mathrm{cc}$.
    As $\cA = \Tilt(\cC)$ is closed under tensor products in $\cC$, \Cref{rem:freemonoidalcocompletion} implies that Day convolution gives rise to the unique (up to monoidal equivalence) monoidal structure $(\otimes^\vee,\mathbf{1}^\vee,\ldots)$ on $\cC^{\vee,\mathrm{cc}} = \PSh(\cA)$ such that the embedding $R\vert_\cA = Y_\cA \colon \cA \to \cC^{\vee,\mathrm{cc}}$ can be enhanced to a monoidal functor and $\otimes^\vee$ is cocontinuous in both arguments.
    
    For the second claim, first note that $\Proj^\mathrm{fg}(\cC^\vee)$ is closed under tensor products in $\cC^{\vee,\mathrm{cc}}$ because the functor $R\vert_\cA \colon \cA \to \cC^{\vee,\mathrm{cc}}$ is monoidal and restricts to an equivalence $\Tilt(\cC) \simeq \Proj^\mathrm{fg}(\cC^\vee)$.
    Since every finitely generated object is a quotient of a finitely generated projective object and since the tensor product $\otimes^\vee$ is cocontinuous (and therefore right exact) in both arguments, it follows that $\cC^{\vee,\mathrm{fg}}$ is closed under tensor products in $\cC^{\vee,\mathrm{cc}}$.
    
    If $\cC$ is braided (or symmetric), then so is $\Tilt(\cC)$, and by \Cref{rem:Dayconvolutionbraided}, there is a canonical (symmetric) braiding on $\cC^{\vee,\mathrm{cc}}$ making $\mathcal{R}\vert_\cA$ a braided monoidal functor.
    
    Next, if $\otimes$ is right exact in both arguments, then the monoidal structure on $\cC$ extends to a (unique up to monoidal equivalence) monoidal structure on $\cC^\mathrm{cc}$ such that the tensor product is cocontinuous in both arguments (see Subsection~\ref{subsec:indcompletion}), making $\cC^\mathrm{cc}$ a $\kk$-linear monoidal and monoidally cocomplete category.
    Again by \Cref{rem:freemonoidalcocompletion}, the opposite Ringel duality functor $L = \hat F \colon \cC^{\vee,\mathrm{cc}} \to \cC^\mathrm{cc}$ can be enhanced (uniquely up to monoidal natural isomorphism) to a monoidal functor, and the latter is braided if $\cC$ is braided by \Cref{rem:freemonoidalcocompletion}.
    
    In order to prove the final claim, first observe that the unit object $\mathbf{1}^\vee \cong R(\mathbf{1})$ is projective in $\cC^\vee$, because $\mathbf{1}$ is a tilting object in $\cC$ by assumption.
    It is straightforward to see that every tensor product of a projective object by a rigid object is projective, and so every rigid object $x \cong \mathbf{1}^\vee \otimes^\vee x$ of $\cC^\vee$ is projective.
    If $\cA$ is rigid, then so is $\Proj^\mathrm{fg}(\cC^\vee)$, because the embedding $R \vert_\cA$ restricts to a monoidal equivalence $\cA \simeq \Proj^\mathrm{fg}(\cC^\vee)$, and we conclude that the rigid objects in $\cC^{\vee,\mathrm{fg}}$ are precisely the projective objects in $\cC^\vee$, as claimed.
\end{proof}

For later use, we note that for an upper finite highest weight category $\mathcal{D}$, a monoidal structure on $\mathcal{D}^\mathrm{cc}$ with the properties of the monoidal structure on $\cC^{\vee,\mathrm{cc}}$ in \Cref{thm:monoidalRingelduality_lowertoupper} also gives rise to a monoidal structure on $\mathcal{D}^\mathrm{op,cc}$.

\begin{Remark}
\label{rem:monoidalstructureupperfiniteopposite}
    Let $\mathcal{D}$ be an upper finite highest weight category with a monoidal structure $(\otimes,\mathbf{1},\ldots)$ on $\mathcal{D}^\mathrm{cc}$ such that $\otimes$ is cocontinuous in both arguments and $\cA = \Proj^\mathrm{fg}(\mathcal{D})$ is a monoidal subcategory.
    The Yoneda extension of the monoidal embedding $\cA \to \mathcal{D}^\mathrm{cc}$ gives rise to a monoidal equivalence $\mathcal{D}^\mathrm{cc} \to \PSh(\cA)$ by Remarks~\ref{rem:presheavesonprojectives} and \ref{rem:freemonoidalcocompletion}, where the monoidal structure on $\PSh(\cA)$ is given by Day convolution. 
    Recall from \Cref{rem:oppositeupperfinite} that
    \[ \mathcal{D}^\mathrm{op} \simeq \PSh^\mathrm{fd}(\cA^\mathrm{op}) , \qquad \mathcal{D}^\mathrm{op,cc} \simeq \PSh(\cA^\mathrm{op}) , \qquad \Proj^\mathrm{fg}(\mathcal{D}^\mathrm{op}) \simeq \cA^\mathrm{op} . \]
    As the monoidal structure on $\cA$ also defines a monoidal structure on $\cA^\mathrm{op}$, Day concolution gives rise to a unique (up to monoidal equivalence) monoidal structure $(\otimes',\mathbf{1},\ldots)$ on $\mathcal{D}^\mathrm{op,cc}$ such that $\otimes'$ is cocontinuous in both arguments, $\Proj^\mathrm{fg}(\mathcal{D}^\mathrm{op})$ is a monoidal subcategory and $\Proj^\mathrm{fg}(\mathcal{D}^\mathrm{op})$ is monoidally equivalent to $\cA^\mathrm{op}$ (see \Cref{rem:freemonoidalcocompletion}).
    
    Analogously, there is a unique (up to monoidal equivalence) monoidal structure $(\otimes'',\mathbf{1},\ldots)$ on $\mathcal{D}^\mathrm{cc}$ such that $\otimes''$ is cocontinuous in both arguments, $\Proj^\mathrm{fg}(\mathcal{D})$ is a monoidal subcategory and $\Proj^\mathrm{fg}(\mathcal{D})$ is monoidally equivalent to $\cA^\mathrm{rev}$.
    Uniqueness of the monoidal structure implies that $(\mathcal{D}^\mathrm{cc},\otimes'',\mathbf{1},\ldots)$ is monoidally equivalent to $\mathcal{D}^\mathrm{cc,rev}$ (that is, to $(\mathcal{D}^\mathrm{cc},\otimes^\mathrm{rev},\mathbf{1},\ldots)$).
\end{Remark}

In order to prove a converse of \Cref{thm:monoidalRingelduality_lowertoupper}, we will consider monoidal upper finite highest weight categories that satisfy certain additional conditions \eqref{eq:Xtensor} or \eqref{eq:Ytensor}.
The latter are partly motivated by the following observation.

\begin{Proposition}
\label{prop:Ringeldualtensorcondition}
    Let $\cC$ be a lower finite highest weight category with weight poset $(\Lambda,\leq)$ and with a monoidal structure $(\otimes,\mathbf{1},\ldots)$ such that $\cA = \Tilt(\cC)$ is a monoidal subcategory and $\otimes$ is exact in both arguments.
    Consider the Ringel dual $\cC^\vee$ with the monoidal structure $( \otimes^\vee , \mathbf{1}^\vee , \ldots )$ on $\cC^{\vee,\mathrm{cc}}$ from \Cref{thm:monoidalRingelduality_lowertoupper}, and note that $\otimes^\vee$ gives rise to a monoidal structure $( \otimes^\vee_\mathrm{c} , \mathbf{1}^\vee_\mathrm{c} , \ldots )$ on the homotopy category $\cT = K^b\big( \Proj^\mathrm{fg}(\cC^\vee) \big)$ via the tensor product of complexes from Subsection~\ref{subsec:tensorproductofcomplexes}.
    Further consider the exceptional collection $\{ Y_\lambda \mid \lambda \in \Lambda \}$ in $\cT$ with dual collection $\{ X_\lambda \mid \lambda \in \Lambda \}$ as in \Cref{thm:Ringeldualityexceptioncollection}.
    Then the subcategories
    \begin{align*}
			\mathcal{Y} & = \big\langle Y_\lambda[d] \mathrel{\big|} \lambda \in \Lambda , d \leq 0 \big\rangle_\mathrm{ext} , \\
			\mathcal{X} & = \big\langle X_\lambda[d] \mathrel{\big|} \lambda \in \Lambda , d \geq 0 \big\rangle_\mathrm{ext}
	\end{align*}
    are monoidal subcateories of $\cT$ with respect to $(\otimes^\vee_\mathrm{c},\mathbf{1}^\vee_\mathrm{c},\ldots)$.
\end{Proposition}

\begin{proof}
    By \cite[Theorem~4.8]{BrundanStroppelSemiInfiniteHighestWeight}, an object of $\cC$ admits a $\nabla$-filtration (or $\Delta$-filtration) if and only if it admits a finite resolution (respectively coresolution) by tilting objects.
    For all $\lambda \in \Lambda$, we fix such a finite resolution (or coresolution)
    \[ \cdots \to T_1^\lambda \to T_0^\lambda \to \nabla(\lambda) \to 0 , \hspace{2cm} 0 \to \Delta(\lambda) \to Q_0^\lambda \to Q_1^\lambda \to \cdots \]
    and consider the complexes
    \[ X_\lambda' = ( 0 \to Q_0^\lambda \to Q_1^\lambda \to \cdots ) , \hspace{2cm} Y_\lambda' = ( \cdots \to T_1^\lambda \to T_0^\lambda \to 0 ) \]
    in $\Tilt(\cC)$ (with $Q_0^\lambda$ and $T_0^\lambda$ in homological degree zero).
    The monoidal equivalence
    \[ \Tilt(\cC) \simeq \Proj^\mathrm{fg}(\cC^\vee) \]
    from \Cref{thm:monoidalRingelduality_lowertoupper} gives rise to a monoidal equivalence of triangulated categories
    \[ \mathcal{L} \colon K^b\big( \Proj^\mathrm{fg}(\cC^\vee) \big) \xrightarrow{~\sim~} K^b\big( \Tilt(\cC) \big) , \]
    and by \Cref{thm:Ringeldualityexceptioncollection} and its proof, we have $\mathcal{L} Y_\lambda \cong Y_\lambda'$ and $\mathcal{L} X_\lambda \cong X_\lambda'$ for all $\lambda \in \Lambda$.
    Further observe that since the tensor product $\otimes$ on $\cC$ is exact in both arguments, the tensor product of complexes gives rise to a monoidal structure $(\otimes_\mathrm{c},\mathbf{1}_\mathrm{c},\ldots)$ on the derived category $D^b(\cC)$ such that $\otimes_\mathrm{c}$ is exact in both arguments.
    The equivalence from \eqref{eq:tiltingequivalence} becomes a monoidal equivalence of triangulated categories
    \[ \mathcal{E} \colon K^b\big( \Tilt(\cC) \big) \xrightarrow{~\sim~} D^b(\cC) \]
    with $\mathcal{E} X_\lambda' \cong \Delta(\lambda)_\mathrm{c}$ and $\mathcal{E} Y_\lambda' \cong \nabla(\lambda)_\mathrm{c}$ for all $\lambda \in \Lambda$ (where $(-)_\mathrm{c}$ denotes complexes concentrated in homological degree zero, as in Subsection~\ref{subsec:tensorproductofcomplexes}).
    Therefore, it suffices to prove that the subcategories
    \begin{align*}
		\mathcal{X}' & = \big\langle \Delta(\lambda)[d] \mathrel{\big|} \lambda \in \Lambda , d \geq 0 \big\rangle_\mathrm{ext} , \\
        \mathcal{Y}' & = \big\langle \nabla(\lambda)[d] \mathrel{\big|} \lambda \in \Lambda , d \leq 0 \big\rangle_\mathrm{ext}
	\end{align*}
    are monoidal subcategories in $D^b\big( \cC)$, with respect to the monoidal structure $(\otimes_\mathrm{c},\mathbf{1}_\mathrm{c},\ldots)$ afforded by the tensor product of complexes.
    Since the tensor product $\otimes_\mathrm{c}$ is exact in both arguments and commutes with the shift functor, it further suffices to show that $\mathcal{X}'$ contains $\mathbf{1}_\mathrm{c}$ and all tensor products of the form $\Delta(\lambda)_\mathrm{c} \otimes_\mathrm{c} \Delta(\mu)_\mathrm{c}$ for $\lambda,\mu \in \Lambda$, and similarly for $\mathcal{Y}'$.

    Since $\mathbf{1}$ is a tilting object in $\cC$ by assumption, it is clear that $\mathbf{1}_c$ belongs to $\mathcal{X}'$ and $\mathcal{Y}'$.
    For $\lambda,\mu \in \Lambda$, there are isomorphisms
    \[ \big( \Delta(\lambda) \otimes \Delta(\mu) \big)_\mathrm{c} \cong \Delta(\lambda)_\mathrm{c} \otimes_\mathrm{c} \Delta(\mu)_\mathrm{c} \cong X_\lambda' \otimes_\mathrm{c} X_\mu' \]
    in $D^b(\cC)$, and this implies that the complex $X_\lambda' \otimes_\mathrm{c} X_\mu'$ provides a finite coresolution of $\Delta(\lambda) \otimes \Delta(\mu)$ by tilting objects in $\cC$.
    Again using \cite[Theorem 4.8]{BrundanStroppelSemiInfiniteHighestWeight}, it follows that $\Delta(\lambda) \otimes \Delta(\mu)$ admits a $\Delta$-filtration, and so $\Delta(\lambda)_\mathrm{c} \otimes_\mathrm{c} \Delta(\mu)_\mathrm{c}$ belongs to $\mathcal{X}'$.
    Analogously, the tensor product $\nabla(\lambda)_\mathrm{c} \otimes_\mathrm{c} \nabla(\mu)_\mathrm{c}$ belongs to $\mathcal{Y}'$, as required.
\end{proof}

\subsection{Upper to lower monoidal Ringel duality}
\label{subsec:monoidalRingelduality_uppertolower}

In this subsection, we explain how a monoidal structure on an upper finite highest weight category $\mathcal{D}$ gives rise to a monoidal structure on the Ringel dual lower finite highest weight category $\prescript{\vee}{}{\mathcal{D}}$ (see \Cref{thm:monoidalRingelduality_uppertolower}).
To be more precise, we will consider monoidal structures on the cocomplete category $\mathcal{D}^\mathrm{cc}$ instead of $\mathcal{D}$ (this is motivated and justified by \Cref{thm:monoidalRingelduality_lowertoupper}), and we will impose one or both of the conditions \eqref{eq:Xtensor} and \eqref{eq:Ytensor} defined below.

\begin{Definition}
\label{def:XtensorYtensor}
    Let $\mathcal{D}$ be an upper finite highest weight category with weight poset $(\Lambda,\leq)$ and with a monoidal structure $(\otimes,\mathbf{1},\ldots)$ on $\mathcal{D}^\mathrm{cc}$ such that $\cA = \Proj^\mathrm{fg}(\mathcal{D})$ is a monoidal subcategory and $\otimes$ is cocontinuous in both arguments.
    For all $\lambda \in \Lambda$, fix a finite projective resolution
	\[ \cdots \to P_{-2}^\lambda \to P_{-1}^\lambda \to P_0^\lambda \to \Delta(\lambda) \to 0 , \]
	and consider the complexes
	\[ Y_\lambda \coloneqq \big( \cdots \to P_{-2}^\lambda \to P_{-1}^\lambda \to P_0^\lambda \to 0 \big) \]
    in $\cA$.
    Further recall from Subsection~\ref{subsec:tensorproductofcomplexes} that the monoidal structure on $\cA$ gives rise to a monoidal structure $(\otimes_\mathrm{c},\mathbf{1}_\mathrm{c},\ldots)$ on $K^b(\cA)$.
    We consider the following condition:
    \begin{flalign} \label{eq:Ytensor}
        \text{The subcategory }\big\langle Y_\nu[d] \mathop{\big|} \nu \in \Lambda , d \leq 0 \big\rangle_\mathrm{ext} \text{ is a monoidal subcategory of } K^b(\cA) .
        && \tag{$Y\otimes$}
	\end{flalign}
    Recall from \Cref{thm:Ringeldualityexceptioncollection} that the complexes $\{ Y_\lambda \mid \lambda \in \Lambda \}$ form an exceptional collection in $K^b(\cA)$, and let $\{ X_\lambda \mid \lambda \in \Lambda \}$ be a dual collection.
    We further consider the following condition:
    \begin{flalign} \label{eq:Xtensor}
        \text{The subcategory }\big\langle X_\nu[d] \mathop{\big|} \nu \in \Lambda , d \geq 0 \big\rangle_\mathrm{ext} \text{ is a monoidal subcategory of } K^b(\cA) .
        && \tag{$X\otimes$}
	\end{flalign}
\end{Definition}

\begin{Remark}
\label{rem:XtensorYtensordependence}
    Let $\mathcal{D}$ be an upper finite highest weight category with weight poset $(\Lambda,\leq)$ and with a monoidal structure $(\otimes,\mathbf{1},\ldots)$ on $\mathcal{D}^\mathrm{cc}$ as in \Cref{def:XtensorYtensor}.
    Further let $\cA = \Proj^\mathrm{fg}(\mathcal{D})$.
    \begin{enumerate}
        \item The definition of \eqref{eq:Ytensor} is independent of the choice of the complexes $\{ Y_\lambda \mid \lambda \in \Lambda \}$, because a projective resolution of $\Delta(\lambda)$ in $\mathcal{D}$ is unique up to isomorphism in $K^b(\cA)$ and because $\otimes_\mathrm{c}$ is a bifunctor.
        The definition of \eqref{eq:Xtensor} is independent of the choice of the complexes $\{ X_\lambda \mid \lambda \in \Lambda \}$ because the dual collection of $\{ Y_\lambda \mid \lambda \in \Lambda \}$ is unique up to isomorphism by \Cref{lem:dualcollectionunique}.
        In other words, the conditions \eqref{eq:Xtensor} and \eqref{eq:Ytensor} only depend on $\mathcal{D}$, the weight datum $(\Lambda,\leq,L)$ and the monoidal structure on $\mathcal{D}^\mathrm{cc}$.
        In the following, we will usually say that \emph{$\mathcal{D}$ satisfies \eqref{eq:Xtensor} or \eqref{eq:Ytensor}}, without explicit reference to the weight datum or the monoidal structure, if the latter are clear from context.
        \item Since the tensor product $\otimes_\mathrm{c}$ on $K^b(\cA)$ is exact and commutes with the shift functor, $\mathcal{D}$ satisfies \eqref{eq:Ytensor} if and only if the subcategory $\big\langle Y_\nu[d] \mathop{\big|} \nu \in \Lambda , d \leq 0 \big\rangle_\mathrm{ext}$ contains the tensor unit $\mathbf{1}_\mathrm{c}$ and all tensor products of the form $Y_\lambda \otimes_\mathrm{c} Y_\mu$ for $\lambda,\mu \in \Lambda$.
    \end{enumerate}
\end{Remark}

\begin{Remark}
    Let $\cC$ be a lower finite highest weight category with a monoidal structure $(\otimes,\mathbf{1},\ldots)$ such that $\Tilt(\cC)$ is a monoidal subcategory and $\otimes$ is exact in both arguments.
    Then another way of stating \Cref{prop:Ringeldualtensorcondition} is to say that the Ringel dual $\cC^\vee$ (equipped with the monoidal structure from \Cref{thm:monoidalRingelduality_lowertoupper} on $\cC^{\vee,\mathrm{cc}}$) satisfies the conditions \eqref{eq:Xtensor} and \eqref{eq:Ytensor}.
\end{Remark}

In practice, the condition \eqref{eq:Ytensor} is easier to verify explicitly (see Propositions~\ref{prop:torvanishingYtensor} below), but the condition \eqref{eq:Xtensor} is more useful in applications (see \Cref{thm:monoidalRingelduality_uppertolower}).
Therefore, it is helpful to note that these conditions are in fact opposite to each other, in the following sense.

\begin{Proposition}
\label{prop:oppositeXtensorYtensor}
    Let $\mathcal{D}$ be an upper finite highest weight category with a $\kk$-linear monoidal structure $(\otimes,\mathbf{1},\ldots)$ on $\mathcal{D}^\mathrm{cc}$ such that $\Proj^\mathrm{fg}(\mathcal{D})$ is a monoidal subcategory and $\otimes$ is cocontinuous in both arguments.
    Consider the opposite category $\mathcal{D}^\mathrm{op}$, with the monoidal structure $(\otimes',\mathbf{1},\ldots)$ on $\mathcal{D}^\mathrm{op,cc}$ from \Cref{rem:monoidalstructureupperfiniteopposite}.
    Then $\mathcal{D}$ satisfies \eqref{eq:Xtensor} if and only if $\mathcal{D}^\mathrm{op}$ satisfies \eqref{eq:Ytensor}.
\end{Proposition}
\begin{proof}
    Let us write $(\Lambda,\leq)$ for the weight poset of $\mathcal{D}$ and $\cA = \Proj(\mathcal{D})$, and observe that there is a canonical equivalence $\cA^\mathrm{op} \simeq \Proj^\mathrm{fg}(\mathcal{D}^\mathrm{op})$ by \Cref{rem:oppositeupperfinite}.
    To avoid confusion, we will denote the standard objects in $\mathcal{D}$ and $\mathcal{D}^\mathrm{op}$ by $\Delta^\mathcal{D}(\lambda)$ and $\Delta^{\mathcal{D}^\mathrm{op}}(\lambda)$, respectively, for $\lambda \in \Lambda$.
    For all $\lambda \in \Lambda$, we fix a finite projective resolution
	\[ \cdots \to P_{-2}^\lambda \to P_{-1}^\lambda \to P_0^\lambda \to \Delta^\mathcal{D}(\lambda) \to 0 , \]
	in $\mathcal{D}$ and consider the complexes
	\[ Y_\lambda^\mathcal{D} \coloneqq \big( \cdots \to P_{-2}^\lambda \to P_{-1}^\lambda \to P_0^\lambda \to 0 \big) \]
    in $\cA$.
    We also consider the complexes $Y_\lambda^{\mathcal{D}^\mathrm{op}}$ in $\cA^\mathrm{op}$ that are defined via finite projective resolutions of the standard objects $\Delta^{\mathcal{D}^\mathrm{op}}(\lambda)$ in $\mathcal{D}^\mathrm{op}$ as above.
    By \Cref{thm:Ringeldualityexceptioncollection}, the complexes $\{ Y_\lambda^\mathcal{D} \mid \lambda \in \Lambda \}$ form an exceptional collection in the triangulated category $\cT = K^b(\cA)$, and $\{ Y_\lambda^\mathcal{D} \mid \lambda \in \Lambda \}$ admits a dual collection $\{ X_\lambda^\mathcal{D} \mid \lambda \in \Lambda \}$.
    Furthermore, the Ringel dual $\cC = \prescript{\vee}{}{\mathcal{D}}$ of $\mathcal{D}$ is the heart of the $t$-structure $(\cT^{\geq 0},\cT^{\leq 0}) = (\mathcal{X},\mathcal{Y})$ on $\cT$ with
    \begin{align*}
        \mathcal{X} & = \big\langle X_\lambda^\mathcal{D}[d] \mathop{\big|} \lambda \in \Lambda , d \leq 0 \big\rangle_\mathrm{ext} , \\
        \mathcal{Y} & = \big\langle Y_\lambda^\mathcal{D}[d] \mathop{\big|} \lambda \in \Lambda , d \geq 0 \big\rangle_\mathrm{ext} ,
    \end{align*}
    the standard and costandard objects in $\cC$ are given by
    \[ \nabla^\cC(\lambda) = Y_\lambda^\mathcal{D} , \hspace{2cm} \Delta^\cC(\lambda) = X_\lambda^\mathcal{D} \]
    for $\lambda \in \Lambda$, and the tilting objects in $\cC$ are $\Tilt(\cC) = \cA$ (embedded in $K^b(\cA)$ as complexes concentrated in degree zero).
    The opposite category $\cC^\mathrm{op}$ is the heart of the $t$-structure $(\mathcal{Y}^\mathrm{op},\mathcal{X}^\mathrm{op})$ on $\cT^\mathrm{op}$, and $\cC^\mathrm{op}$ is a lower finite highest weight category with costandard objects
    \begin{equation} \label{eq:costandardobjectopposite}
        \nabla^{\cC^\mathrm{op}}(\lambda) = \Delta^\cC(\lambda) = X_\lambda^\mathcal{D} ,
    \end{equation}
    for $\lambda \in \Lambda$, and with tilting objects $\Tilt(\cC^\mathrm{op}) = \cA^\mathrm{op}$.
    Next observe that there is a monoidal equivalence
    \[ \mathrm{op} \colon \cT^\mathrm{op} \xrightarrow{~\sim~} K^b(\cA^\mathrm{op}) , \]
    with respect to the tensor product of complexes on $\cT$ and $K^b(\cA^\mathrm{op})$, which sends a complex in $\cA$ to the corresponding complex in $\cA^\mathrm{op}$ with the grading and differentials reversed.
    The equality \eqref{eq:costandardobjectopposite} implies that the complex $\mathrm{op}(X_\lambda^\mathcal{D})$ is isomorphic in $K^b(\cA^\mathrm{op})$ to a resolution of $\nabla^{\cC^\mathrm{op}}(\lambda)$ by tilting objects in $\cC^\mathrm{op}$, and so Ringel duality implies that $\mathrm{op}(X_\lambda^\mathcal{D})$ (now viewed as a complex of projective objects in $\mathcal{D}^\mathrm{op}$) is also isomorphic to a projective resolution of $\Delta^{\mathcal{D}^\mathrm{op}}(\lambda)$ in $\mathcal{D}^\mathrm{op}$.
    As projective resolutions are unique up to isomorphism in the homotopy category, we conclude that $Y_\lambda^{\mathcal{D}^\mathrm{op}} \cong \mathrm{op}( X_\lambda^\mathcal{D} )$ in $K^b(\cA^\mathrm{op})$.
    Now the claim is immediate from the definitions of \eqref{eq:Ytensor} and \eqref{eq:Xtensor} and the fact that $\mathrm{op}$ is monoidal.
\end{proof}

For an upper finite highest weight category $\mathcal{D}$ with a monoidal structure on $\mathcal{D}^\mathrm{cc}$ as in \Cref{prop:oppositeXtensorYtensor}, the subcategory $\Proj^\mathrm{fg}(\mathcal{D})$ often admits a monoidal duality, which simplifies the tasks of checking the conditions \eqref{eq:Xtensor} and \eqref{eq:Ytensor}, as we explain below.
The following definition is a monoidal analogue of \Cref{def:orderpreservingduality}; recall that $\cA^\mathrm{rev}$ denotes the reverse monoidal category of a monoidal category $\cA$, with the same underlying category, but with the reverse tensor product $X \otimes^\mathrm{rev}Y = Y \otimes X$.

\begin{Definition}
\label{def:orderpreservingmonoidalduality}
    Let $\cA$ be a $\kk$-linear monoidal Krull--Schmidt category with a poset $(\Lambda,\leq)$ and a map
    $\Lambda \to \mathrm{Ob}(\cA) , ~ \lambda \mapsto X_\lambda$
    indexing the isomorphism classes of indecomposable objects in $\cA$.
    An \emph{order preserving monoidal duality} on $\cA$ is a $\kk$-linear monoidal equivalence $\tau \colon \cA^\mathrm{rev} \to \cA^\mathrm{op}$ with an order automorphism $\sigma \colon \Lambda \to \Lambda$ such that $\tau (X_\lambda) \cong X_{\sigma(\lambda)}$ for all $\lambda \in \Lambda$.
\end{Definition}

\begin{Corollary}
\label{cor:dualityXtensorYtensor}
    Let $\mathcal{D}$ be an upper finite highest weight category with a monoidal structure $(\otimes,\mathbf{1},\ldots)$ on $\mathcal{D}^\mathrm{cc}$ such that $\Proj^\mathrm{fg}(\mathcal{D})$ is a monoidal subcategory and $\otimes$ is cocontinuous in both arguments.
    Further suppose that there is an order preserving monoidal duality
    \[ \tau \colon \Proj^\mathrm{fg}(\mathcal{D})^\mathrm{rev} \longrightarrow \Proj^\mathrm{fg}(\mathcal{D})^\mathrm{op} . \]
    If $\mathcal{D}$ satisfies one of the conditions \eqref{eq:Xtensor} or \eqref{eq:Ytensor}, then $\mathcal{D}$ satisfies both \eqref{eq:Xtensor} and \eqref{eq:Ytensor}.
\end{Corollary}
\begin{proof}
    Let us write $(\Lambda,\leq)$ for the weight poset of $\mathcal{D}$ and $\sigma \colon \Lambda \to \Lambda$ for the order automorphism such that $\tau P(\lambda) \cong P( \sigma \lambda )$.
    Further let $\cA = \Proj^\mathrm{fg}(\mathcal{D})$ and recall from Remarks~\ref{rem:presheavesonprojectives} and \ref{rem:oppositeupperfinite} that there are equivalences $\mathcal{D} \simeq \PSh^\mathrm{fd}(\cA)$ and $\mathcal{D}^\mathrm{op} \simeq \PSh^\mathrm{fd}(\cA^\mathrm{op})$.
    Composing with $\tau$ gives rise to an equivalence
    \[ \tilde \tau \colon \mathcal{D}^\mathrm{cc} \simeq \PSh(\cA) \longrightarrow \PSh(\cA^\mathrm{op}) \simeq \mathcal{D}^\mathrm{op,cc} , \qquad F \longmapsto F \circ \tau , \]
    which restricts to an equivalence $\mathcal{D} \simeq \mathcal{D}^\mathrm{op}$.
    Furthermore, the equivalence $\tilde \tau$ is compatible with the highest weight structures on $\mathcal{D}$ and $\mathcal{D}^\mathrm{op}$, in that $\tilde \tau L(\lambda) \cong L( \sigma \lambda )$ for $\lambda \in \Lambda$.
    
    Next recall from \Cref{rem:monoidalstructureupperfiniteopposite} that the monoidal structure on $\cA$ gives rise to a monoidal structure on $\mathcal{D}^{\mathrm{op},\mathrm{cc}} \simeq \PSh(\cA^\mathrm{op})$ such that the composition
    \[ \cA^\mathrm{rev} \xrightarrow{~\tau~} \cA^\mathrm{op} \xrightarrow{~Y_{\cA^\mathrm{op}}~} \PSh(\cA^\mathrm{op}) \simeq \mathcal{D}^{\mathrm{op,cc}} \]
    is monoidal.
    The equivalence $\tilde \tau$ identifies with the Yoneda extension of $Y_{\cA^\mathrm{op}} \circ \tau$, and so $\tilde \tau$ can be enhanced to a monoidal equivalence $\mathcal{D}^\mathrm{cc,rev} \simeq \mathcal{D}^\mathrm{op,cc}$ by Remarks~\ref{rem:freemonoidalcocompletion} and \ref{rem:monoidalstructureupperfiniteopposite}.
    Now if $\mathcal{D}$ satisfies \eqref{eq:Ytensor} with respect to the given monoidal structure $(\otimes,\mathbf{1},\ldots)$ on $\mathcal{D}^\mathrm{cc}$, then $\mathcal{D}$ also satisfies \eqref{eq:Ytensor} with respect to $(\otimes^\mathrm{rev},\mathbf{1},\ldots)$, because $\mathcal{D}^\mathrm{cc}$ and $\mathcal{D}^\mathrm{cc,rev}$ have the same monoidal subcategories.
    As $\tilde \tau \colon \mathcal{D}^\mathrm{cc,rev} \to \mathcal{D}^\mathrm{op,cc}$ is a monoidal equivalence with $\tilde \tau L(\lambda) \cong L(\sigma \lambda)$ for all $\lambda \in \Lambda$, it then follows from \Cref{rem:XtensorYtensordependence} that $\mathcal{D}^\mathrm{op}$ also satisfies \eqref{eq:Ytensor}, and so $\mathcal{D}$ satisfies \eqref{eq:Xtensor} by \Cref{prop:oppositeXtensorYtensor}.
    Analogously, if $\mathcal{D}$ satisfies \eqref{eq:Xtensor}, then so does $\mathcal{D}^\mathrm{op}$, and it follows that $\mathcal{D}$ satisfies \eqref{eq:Ytensor}.
\end{proof}

The following proposition provides a more explicit criterion for checking whether a given upper finite highest weight category satisfies the condition \eqref{eq:Ytensor}.
It will be used, for instance, to show that \eqref{eq:Ytensor} is satisfied for certain categories of presheaves on interpolation categories, in \Cref{sec:tensorenvelopes} below.

\begin{Proposition} \label{prop:torvanishingYtensor}
    Let $\mathcal{D}$ be an upper finite highest weight category and let $(\otimes,\mathbf{1},\ldots)$ be a monoidal structure on $\mathcal{D}^\mathrm{cc}$ such that $\Proj^\mathrm{fg}(\mathcal{D})$ is a monoidal subcategory and $\otimes$ is cocontinuous in both arguments.
    Further suppose that
    \begin{flalign} \label{eq:Deltatensor}
		& \mathcal{D}_\Delta \text{ is closed under tensor products in } \mathcal{D}^\mathrm{cc}; && \tag{$\Delta\otimes$} \\
        \label{eq:DeltaTor}
		& \text{we have } \mathrm{Tor}_i^\mathcal{D}(M,N) = 0 \text{ for all } i>0 \text{ and all objects  } M \text{ and } N \text{ of } \mathcal{D}_\Delta . && \tag{$\Delta\mathrm{Tor}$}
	\end{flalign}
    Then $\mathcal{D}$ satisfies \eqref{eq:Ytensor}.
\end{Proposition}
\begin{proof}
    Let us write $(\Lambda,\leq)$ for the weight poset of $\mathcal{D}$.
    For all $\lambda \in \Lambda$, we fix a finite projective resolution
    \[ \cdots \to P_{-2}^\lambda \to P_{-1}^\lambda \to P_0^\lambda \to \Delta(\lambda) \to 0 , \]
	of $\Delta(\lambda)$ in $\mathcal{D}$ and consider the complex
	\[ Y_\lambda \coloneqq \big( \cdots \to P_{-2}^\lambda \to P_{-1}^\lambda \to P_0^\lambda \to 0 \big) , \]
    as in \Cref{def:XtensorYtensor}.
    By \Cref{rem:XtensorYtensordependence}, it suffices to show that the subcategory
    \[ \mathcal{Y} = \big\langle Y_\nu[d] \mathop{\big|} \nu \in \Lambda , d \leq 0 \big\rangle_\mathrm{ext} \]
    of $K^b\big( \Proj^\mathrm{fg}(\mathcal{D}) \big)$ contains $\mathbf{1}$ and $Y_\lambda \otimes_\mathrm{c} Y_\mu$ for all $\lambda,\mu \in \Lambda$, where $\otimes_\mathrm{c}$ denotes the tensor product of complexes.
    To that end, first observe that the canonical functor $K^b\big( \Proj^\mathrm{fg}(\mathcal{D}) \big) \longrightarrow D^b(\mathcal{D})$ is fully faithful (see Corollary 10.4.7 in \cite{WeibelHomAlg}) and sends $Y_\lambda$ to $\Delta(\lambda)$ (considered as a complex concentrated in degree zero) for all $\lambda \in \Lambda$.
    Since $\Proj^\mathrm{fg}(\mathcal{D})$ is a monoidal subcategory of $\mathcal{D}^\mathrm{cc}$, the tensor unit $\mathbf{1}$ is finitely generated projective, and since $\mathcal{D}$ satisfies the condition $(P\Delta)$, we conclude that $\mathbf{1}$ belongs to $\mathcal{Y}$, as required.
    Next, we claim that
    \[ Y_\lambda \otimes_\mathrm{c} Y_\mu \cong \Delta(\lambda) \otimes \Delta(\mu) \]
    in $D^b(\mathcal{D})$, where $\Delta(\lambda) \otimes \Delta(\mu)$ is viewed as a complex concentrated in degree zero.
    In order to prove the claim, we will first compute the cohomology groups of $Y_\lambda \otimes_\mathrm{c} Y_\mu$.

    Recall from Subsection~\ref{subsec:tensorproductofcomplexes} that $Y_\lambda \otimes_\mathrm{c} Y_\mu$ is the total complex of the double complex $P_{\bullet,\bullet} = Y_\lambda \otimes_\mathrm{dc} Y_\mu$ with terms $P_{i,j} = P_i^\lambda \otimes P_j^\mu$.
    The cohomology of the total complex is the limit of the spectral sequence of the double complex
    \[ E_2^{p,q} = H^i_\mathrm{hor}\big( H^j_\mathrm{ver}(P_{\bullet,\bullet}) \big) \Longrightarrow H^{i+j}( Y_\lambda \otimes_\mathrm{c} Y_\mu ) , \]
    where $H^j_\mathrm{ver}(P_{\bullet,\bullet})$ denotes the complex whose $i$-th terms is the $j$-th ``vertical'' cohomology group of the complex $P_{i,\bullet}$ and $H^i_\mathrm{hor}\big( H^j_\mathrm{ver}(P_{\bullet,\bullet}) \big)$ is the $i$-th ``horizontal'' cohomology group of $H^j_\mathrm{ver}(P_{\bullet,\bullet})$.
    Note that for all $i \leq 0$, we have $P_{i,\bullet} = P_i^\lambda \otimes Y_\mu$ and so
    \[ H^j( P_{i,\bullet} ) = H^j( P_i^\lambda \otimes Y_\mu ) = \mathrm{Tor}^\mathcal{D}_{-j}\big( P_i^\lambda , \Delta(\mu) \big) = \begin{cases} P_i^\lambda \otimes \Delta(\mu) & \text{if } j=0 , \\ 0 & \text{otherwise}
    \end{cases} \]
    for all $j \leq 0$ by \eqref{eq:DeltaTor}.
    This implies that $H^j_\mathrm{ver}(P_{\bullet,\bullet}) = 0$ for $j \neq 0$ and $H^0_\mathrm{ver}(P_{\bullet,\bullet}) \cong Y_\lambda \otimes \Delta(\mu)$, and as before, we obtain
    \[ H^i_\mathrm{hor}\big( H^0_\mathrm{ver}(P_{\bullet,\bullet}) \big) = H^i( Y_\lambda \otimes \Delta(\mu) ) = \mathrm{Tor}^\mathcal{D}_{-i}\big( \Delta(\lambda) , \Delta(\mu) \big) \cong \begin{cases} \Delta(\lambda) \otimes \Delta(\mu) & \text{if } i = 0 \\ 0 & \text{otherwise} \end{cases} \]
    for all $i \leq 0$ by \eqref{eq:DeltaTor}.
    Thus, the spectral sequence degenerates and we obtain
    \[ H^i( Y_\lambda \otimes_\mathrm{c} Y_\mu ) \cong \begin{cases} \Delta(\lambda) \otimes \Delta(\mu) & \text{if } i=0 , \\ 0 & \text{otherwise} . \end{cases} \]
    Therefore, we have $Y_\lambda \otimes_\mathrm{c} Y_\mu \cong \Delta(\lambda) \otimes \Delta(\mu)$ in $D^b(\mathcal{D})$, as claimed.
    
    Now as $\Delta(\lambda) \otimes \Delta(\mu)$ belongs to $\mathcal{D}_\Delta$ by \eqref{eq:Deltatensor} and as the canonical functor $K^b\big( \Proj^\mathrm{fg}(\mathcal{D}) \big) \to D^b(\mathcal{D})$ is fully faithful and sends $Y_\nu$ to $\Delta(\nu)$ for all $\nu \in \Lambda$, we conclude that $Y_\lambda \otimes Y_\mu$ belongs to $\mathcal{Y}$, as required.
\end{proof}

Now we are ready to prove the main result of this subsection, providing a converse to \Cref{thm:monoidalRingelduality_lowertoupper}.

\begin{Theorem}[Upper to lower monoidal Ringel duality]
\label{thm:monoidalRingelduality_uppertolower}
    Let $\mathcal{D}$ be an upper finite highest weight category with a monoidal structure $(\otimes_\mathcal{D},\mathbf{1}_\mathcal{D},\ldots)$ on $\mathcal{D}^\mathrm{cc}$ such that $\Proj^\mathrm{fg}(\mathcal{D})$ is a monoidal subcategory and $\otimes_\mathcal{D}$ is cocontinuous in both arguments.
    Let $\cC = \prescript{\vee}{}{\mathcal{D}}$ be the Ringel dual of $\cC$.
    \begin{enumerate}
        \item Suppose that $\mathcal{D}$ satisfies the condition \eqref{eq:Xtensor}.
        Then $\cC$ admits a
        monoidal structure $(\otimes_\cC,\mathbf{1}_\cC,\ldots)$ such that $\otimes_\cC$ is right exact in both arguments, $\Tilt(\cC)$ is a monoidal subcategory and the Ringel duality functor $R = R_\cC \colon \cC \to \cC^\vee \simeq \mathcal{D}$ restricts to a monoidal equivalence $\Tilt(\cC) \simeq \Proj^\mathrm{fg}( \mathcal{D} )$.
        
        The opposite Ringel duality functor $L \colon \mathcal{D}^\mathrm{cc} \simeq \cC^{\vee,\mathrm{cc}} \to \cC^\mathrm{cc}$ can be enhanced to a monoidal functor (with respect to the canonical extension of $\otimes_\cC$ to $\cC^\mathrm{cc} = \Ind(\cC)$).
        
        If $\mathcal{D}^\mathrm{cc}$ has a (symmetric) braiding, then $\cC$ admits a canonical (symmetric) braiding such that $L \colon \mathcal{D}^\mathrm{cc} \to \cC^\mathrm{cc}$ is a braided monoidal functor (with respect to the induced braiding on $\cC^\mathrm{cc}$).
        \item Suppose that $\mathcal{D}$ satisfies the conditions \eqref{eq:Xtensor} and \eqref{eq:Ytensor}.
        Then $\otimes_\cC$ is exact in both arguments.
        \item Suppose that $\mathcal{D}$ satisfies one of the conditions \eqref{eq:Xtensor} or \eqref{eq:Ytensor}, that $\Proj^\mathrm{fg}(\mathcal{D})$ is rigid and that the duality functors
        \[ (-)^* \colon \Proj^\mathrm{fg}(\mathcal{D}) \longrightarrow \Proj^\mathrm{fg}(\mathcal{D})^\mathrm{op} , \qquad \prescript{*}{}{(-)} \colon \Proj^\mathrm{fg}(\mathcal{D}) \longrightarrow \Proj^\mathrm{fg}(\mathcal{D})^\mathrm{op} \]
        are order preserving dualities in the sense of \Cref{def:orderpreservingduality}.
        Then $(\cC,\otimes_\cC,\mathbf{1}_\cC,\ldots)$ is rigid, the monoidal subcategory $\Tilt(\cC)$ is closed under taking duals and $\mathcal{D}$ satisfies \eqref{eq:Xtensor} and \eqref{eq:Ytensor}.
        In particular, $\otimes_\cC$ is exact in both arguments.
    \end{enumerate}
\end{Theorem}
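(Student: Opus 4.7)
The plan is to realise $\cC = \prescript{\vee}{}{\mathcal{D}}$ as the heart of the $t$-structure on $\cT = K^b(\cA)$ described in \Cref{thm:Ringeldualityexceptioncollection}, where $\cA = \Proj^\mathrm{fg}(\mathcal{D})$, and transport the monoidal structure of $\cT$ (coming from the tensor product of complexes, Subsection~\ref{subsec:tensorproductofcomplexes}) down to the heart via a truncation. Writing $(\otimes_\mathrm{c},\mathbf{1}_\mathrm{c},\ldots)$ for this monoidal structure on $\cT$ and $\tau^{\geq 0}$ for the canonical truncation, condition \eqref{eq:Xtensor} says exactly that the aisle $\cT^{\leq 0}$ is a monoidal subcategory, so for $X,Y \in \cC$ we have $X \otimes_\mathrm{c} Y \in \cT^{\leq 0}$ and we may define
\[ X \otimes_\cC Y \coloneqq H^0(X \otimes_\mathrm{c} Y) = \tau^{\geq 0}(X \otimes_\mathrm{c} Y) , \qquad \mathbf{1}_\cC \coloneqq \mathbf{1}_\mathrm{c} . \]
From \eqref{eq:Xtensor} one also obtains that tensoring any object of $\cT^{\leq 0}$ with an object of $\cT^{<0} = \cT^{\leq 0}[1]$ lands again in $\cT^{<0}$ (by a single shift argument); applying $\tau^{\geq 0}$ to the fibre sequence $\tau^{<0}(Y \otimes_\mathrm{c} Z) \to Y \otimes_\mathrm{c} Z \to \tau^{\geq 0}(Y \otimes_\mathrm{c} Z)$ after tensoring with $X$ (and its symmetric analogue) then gives canonical isomorphisms
\[ X \otimes_\cC (Y \otimes_\cC Z) \cong \tau^{\geq 0}(X \otimes_\mathrm{c} Y \otimes_\mathrm{c} Z) \cong (X \otimes_\cC Y) \otimes_\cC Z , \]
from which the associator, unitors, and (when $\mathcal{D}^\mathrm{cc}$ is braided or symmetrically braided) the braiding on $\cC$ descend from $\cT$. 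Right exactness of $\otimes_\cC$ in each argument follows from the long exact cohomology sequence attached to a short exact sequence $0 \to X' \to X \to X'' \to 0$ in $\cC$, together with the vanishing $H^1(X'' \otimes_\mathrm{c} Y) = 0$ on the aisle.

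The remaining assertions of part~(1) are essentially formal. The subcategory $\Tilt(\cC) = \cA$, embedded in $\cT$ as complexes concentrated in degree zero, inherits its original monoidal structure, and \Cref{thm:Ringeldualityexceptioncollection}(5) upgrades this to the monoidal equivalence $\Tilt(\cC) \simeq \Proj^\mathrm{fg}(\mathcal{D})$. The canonical extension of $\otimes_\cC$ to $\cC^\mathrm{cc} = \Ind(\cC)$ is cocontinuous in both arguments (Subsection~\ref{subsec:indcompletion}), so the monoidal enhancement of $L \colon \mathcal{D}^\mathrm{cc} \to \cC^\mathrm{cc}$, together with its braided variant, follows by applying \Cref{lem:freemonoidalcocompletion} and \Cref{rem:freemonoidalcocompletion} to the monoidal composite $\cA = \Tilt(\cC) \hookrightarrow \cC \hookrightarrow \cC^\mathrm{cc}$ under the identification $\mathcal{D}^\mathrm{cc} \simeq \PSh(\cA)$ of \Cref{rem:presheavesonprojectives}. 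For part~(2), \eqref{eq:Ytensor} additionally says that $\cT^{\geq 0}$ is closed under $\otimes_\mathrm{c}$, so $\otimes_\mathrm{c}$ preserves the heart $\cC = \cT^{\leq 0} \cap \cT^{\geq 0}$, the truncation in the definition of $\otimes_\cC$ becomes unnecessary, and the same long exact sequence argument with $H^{-1}(X'' \otimes_\mathrm{c} Y) = 0$ delivers left exactness as well.

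For part~(3), the hypothesised duality functors give order-preserving monoidal dualities on $\cA = \Proj^\mathrm{fg}(\mathcal{D})$ in the sense of \Cref{def:orderpreservingmonoidalduality}, so \Cref{cor:dualityXtensorYtensor} promotes the assumed one of \eqref{eq:Xtensor}, \eqref{eq:Ytensor} to the conjunction of both; part~(2) then gives exactness of $\otimes_\cC$ and the fact that $\otimes_\mathrm{c}$ preserves $\cC$. By \Cref{thm:Ringeldualityexceptioncollection}(6) the duality on $\cA$ extends to a duality on $\cT$ restricting to a duality on $\cC$; this extension is a genuine rigid duality on $\cT$ because $\cT$ is rigid as the bounded homotopy category of the rigid monoidal category $\cA$ (Subsection~\ref{subsec:tensorproductofcomplexes}) and duals in $K^b(\cA)$ are computed termwise. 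The evaluation and coevaluation morphisms in $\cT$ witnessing these duals then lie in $\cC$ because $\otimes_\mathrm{c}$ preserves $\cC$, and the triangle identities transfer from $\cT$ to $\cC$, so $\cC$ is rigid; $\Tilt(\cC) = \cA$ is closed under duals by the hypothesis on $\cA$. The main technical obstacle across the proof is the verification of associativity for $\otimes_\cC$ in part~(1), which relies crucially on the interaction between the canonical truncation and the monoidal structure on $\cT$, and the transfer of rigidity from $\cT$ to $\cC$ in part~(3); both hinge on the cohomological vanishing provided by conditions \eqref{eq:Xtensor} and \eqref{eq:Ytensor}.
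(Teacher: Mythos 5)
Your proposal is correct and takes essentially the same route as the paper: realise $\cC$ as the heart of the $t$-structure on $K^b(\Proj^\mathrm{fg}(\mathcal{D}))$, define $\otimes_\cC$ by truncating the tensor product of complexes, use the closure of the aisle under $\otimes_\mathrm{c}$ (condition \eqref{eq:Xtensor}) for associativity and right $t$-exactness, invoke \Cref{cor:dualityXtensorYtensor} and parts (5)--(6) of \Cref{thm:Ringeldualityexceptioncollection} for parts (2)--(3), and appeal to the Day-convolution universal property for the monoidal enhancement of $L$. The only difference is presentational: where the paper delegates the associativity verification and right exactness to \cite{BondarkoDeglise} and \cite[Section 1.3.16]{FaisceauxPervers}, you unwind the fibre-sequence and long-exact-sequence arguments directly, which is a valid substitute.
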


\begin{proof}
    Let us write $(\Lambda,\leq)$ for the weight poset of $\mathcal{D}$ and $\cA = \Proj^\mathrm{fg}(\mathcal{D})$.
    By \Cref{thm:Ringeldualityexceptioncollection}, the Ringel dual of $\mathcal{D}$ can be defined as the heart of a $t$-structure on the triangulated category $\cT = K^b(\cA)$; more precisely, there are complexes $\{ Y_\lambda \mid \lambda \in \Lambda \}$ and $\{ X_\lambda \mid \lambda \in \Lambda \}$ in $\cA$ such that the subcategories
    \begin{align*}
		\cT^{\geq 0} & = \big\langle Y_\lambda[d] \mathrel{\big|} \lambda \in \Lambda , d \leq 0 \big\rangle_\mathrm{ext} , \\
		\cT^{\leq 0} & = \big\langle X_\lambda[d] \mathrel{\big|} \lambda \in \Lambda , d \geq 0 \big\rangle_\mathrm{ext} .
	\end{align*}
    form a $t$-structure on $\cT$ whose heart $\cC = \cT^{\leq 0} \cap \cT^{\geq 0}$ is a lower finite highest weight category with weight poset $(\Lambda,\leq^\mathrm{op})$ and Ringel dual $\cC^\vee \simeq \mathcal{D}$.
    Furthermore, we have $\Tilt(\cC) = \cA = \Proj^\mathrm{fg}(\mathcal{D})$, the subcategory of complexes concentrated in degree zero in $\cT = K^b(\cA)$, and for the Ringel duality functor $R \colon \cC \to \cC^\vee \simeq \mathcal{D}$, there is a natural isomorphism $R \cong H^0(-) \circ \imath$, where $\imath \colon \cC \to \cT$ is the canonical embedding and $H^0(-) \colon \cT \to \mathcal{D}$ is the degree zero cohomology functor.

    Now suppose that $\mathcal{D}$ satisfies \eqref{eq:Xtensor}, i.e.\ that the subcategory $\cT^{\leq 0}$ is a monoidal subcategory of $\cT$ with respect to the tensor product of complexes $\otimes_\mathrm{c}$.
    Further let $\tau_{\geq 0} \colon \cT \to \cT^{\geq 0}$ for the truncation functor with respect to the $t$-structure $(\cT^{\leq 0},\cT^{\geq 0})$, as in \cite[Proposition 1.3.3]{FaisceauxPervers}.
    Then there is a unique monoidal structure on $\cC = \cT^{\geq 0} \cap \cT^{\leq 0}$ such that the restriction $\tau_{\geq 0} \colon \cT^{\leq 0} \to \cC$ can be enhanced to a monoidal functor, with tensor product given by
    \[ X \otimes_\cC Y = \tau_{\geq 0}( X \otimes_\mathrm{c} Y ) \]
    for objects $X$ and $Y$ of $\cC$ (see for instance Section 1.2.4 in \cite{BondarkoDeglise}, where this result is discussed using homological instead of cohomological indexing for $t$-structures).
    The tensor unit is $\mathbf{1}_\cC = \tau_{\geq 0} \mathbf{1}_\mathcal{D} = \mathbf{1}_\mathcal{D}$, where the second equality follows from the fact that $\mathbf{1}_\mathcal{D}$ belongs to $\Proj^\mathrm{fg}(\mathcal{D}) = \cA = \Tilt(\cC)$.
    The tensor product $\otimes_\cC$ is right exact in both arguments by the general results on right $t$-exact functors discussed in \cite[Section 1.3.16]{FaisceauxPervers}.
    Furthermore, since $\Tilt(\cC) = \cA = \Proj^\mathrm{fg}(\mathcal{D})$ is closed under tensor products in $\cT$, we have
    \[ X \otimes_\cC Y = X \otimes_\mathrm{c} Y = X \otimes_\mathcal{D} X \]
    for all objects $X$ and $Y$ of $\Tilt(\cC) = \cA$, and since $H^0(-) \circ \imath$ restricts to the identity functor on $\Tilt(\cC) = \cA = \Proj^\mathrm{fg}(\mathcal{D})$, we conclude that the Ringel duality functor $R \cong H^0(-) \circ \imath$ restricts to a monoidal equivalence $\Tilt(\cC) \simeq \Proj^\mathrm{fg}(\mathcal{D})$.

    Since $\otimes_\cC$ is right exact in both arguments, the monoidal structure on $\cC$ uniquely extends to a monoidal structure on $\cC^\mathrm{cc} = \Ind(\cC)$ with a tensor product that is cocontinuous in both arguments.
    With respect to this monoidal structure on $\cC^\mathrm{cc}$, the opposite Ringel duality functor $L \colon \mathcal{D}^\mathrm{cc} \to \cC^\mathrm{cc}$ can be enhanced to a monoidal functor by part (4) of \Cref{thm:monoidalRingelduality_lowertoupper}, because $\mathcal{D} \simeq \cC^\vee$ is the Ringel dual of $\cC$ and the monoidal structure on $\mathcal{D}^\mathrm{cc}$ is unique (up to monoidal equivalence) with the properties that $\otimes_\mathcal{D}$ is cocontinuous in both arguments and the Ringel duality functor $R \colon \cC \to \mathcal{D}$ restricts to a monoidal equivalence $\Tilt(\cC) \simeq \Proj^\mathrm{fg}(\mathcal{D})$.

    If $\mathcal{D}$ admits a (symmetric) braiding, then the latter restricts to a (symmetric) braiding on the monoidal subcategory $\cA = \Proj^\mathrm{fg}(\mathcal{D})$, which extends to a (symmetric) braiding on $\cT = K^b(\cA)$.
    By definition of the tensor product $\otimes_\cC$, the (symmetric) braiding on $\cT$ induces a canonical (symmetric) braiding on $\cC$ such that the Ringel duality functor $R \colon \cC \to \mathcal{D}$ restricts to a braided monoidal equivalence $\Tilt(\cC) \simeq \Proj^\mathrm{fg}(\mathcal{D})$.
    The claim that $L$ is braided follows from part (4) of \Cref{thm:monoidalRingelduality_lowertoupper} and (as before) the uniqueness of the monoidal structure on $\mathcal{D}^\mathrm{cc}$.

    Now suppose that $\mathcal{D}$ satisfies \eqref{eq:Xtensor} and \eqref{eq:Ytensor}.
    Then both $\cT^{\leq 0}$ and $\cT^{\geq 0}$ are closed under tensor products in $\cT$, hence so is $\cC = \cT^{\geq 0} \cap \cT^{\leq 0}$, and we have
    \[ X \otimes_\cC Y = \tau_{\geq 0}( X \otimes_\mathrm{c} Y ) = X \otimes_\mathrm{c} Y \]
    for all objects $X$ and $Y$ of $\cC$.
    The tensor product $\otimes_\cC$ is exact in both arguments by the general results on $t$-exact functors discussed in \cite[Section 1.3.16]{FaisceauxPervers}.

    Finally, suppose that $\mathcal{D}$ satisfies one of the conditions \eqref{eq:Xtensor} or \eqref{eq:Ytensor}, that $\cA = \Proj^\mathrm{fg}(\mathcal{D})$ is rigid, and that the duality functors are order preserving dualities in the sense of \Cref{def:orderpreservingduality}.
    Then the left duality functor is automatically an order preserving monoidal duality $(-)^* \colon \cA^\mathrm{rev} \to \cA^\mathrm{op}$ in the sense of \Cref{def:orderpreservingmonoidalduality}, hence $\mathcal{D}$ satisfies both of the conditions \eqref{eq:Xtensor} and \eqref{eq:Ytensor} by \Cref{cor:dualityXtensorYtensor}, and $\otimes_\cC$ is exact in both arguments by part (2) of the theorem.
    Furthermore, the tensor product of complexes $\otimes_\mathrm{c}$ makes $\cT = K^b(\cA)$ a rigid monoidal category, with left duality functor $(-)^* \colon \cT \to \cT^\mathrm{op}$ induced by the left duality functor on $\cA$, as explained in Subsection~\ref{subsec:tensorproductofcomplexes}.
    By part (6) of \Cref{thm:Ringeldualityexceptioncollection}, the left duality functor on $\cT$ restricts to a functor $(-)^* \colon \cC \to \cC^\mathrm{op}$, which in turn restricts to the left duality functor on $\cA = \Tilt(\cC)$ (and similarly for the right duality functor).
    In particular, the monoidal subcategory $\cC$ is closed under taking duals in the rigid monoidal category $\cT$, and so $\cC$ is rigid and $\Tilt(\cC)$ is closed under taking duals, as claimed.
\end{proof}

\subsection{Monoidal abelian envelopes} \label{subsec:abelian-envelopes}

Following \cite[Section~1.3]{CoulembierMonAbEnv}, a \emph{pseudo-tensor category} is an essentially small $\kk$-linear additive idempotent complete rigid monoidal category whose tensor unit has a one-dimensional endomorphism space.
A $\kk$-linear monoidal functor between pseudo-tensor categories is called a \emph{tensor functor}, and a pseudo-tensor category that is abelian is called a \emph{tensor category}.
Given a pseudo-tensor category $\cA$, one may ask whether $\cA$ embeds into a (universal) tensor category.
This motivates the following definition (see \cite[Definition 2.1.1]{CEOPAbEnvQuotProp}).

\begin{Definition}
    Let $\cA$ be a pseudo-tensor category.
    A \emph{weak monoidal abelian envelope} of $\cA$ is a pair $(\cT,e)$,
    where $\cT$ is a tensor category and $e \colon \cA \to \cT$ is a
    faithful tensor functor, such that restriction along $e$ induces an equivalence of categories
    \begin{equation} \label{eq:mon-ab-env-universal-property}
        \Tens^{\text{exact}}(\cT,\cT') \xrightarrow{~~\sim~~} \Tens^{\text{faith}}(\cA,\cT') ,
    \end{equation}
    for every tensor category $\cT'$.
    Here, $\Tens^{\text{faith}}(\cA,\cT')$ and $\Tens^{\text{exact}}(\cT,\cT')$ denote the categories of tensor functors that are faithful or exact, respectively, with homomorphisms given by monoidal natural transformations.
    A \emph{monoidal abelian envelope} is a weak monoidal abelian envelope $(\cT,e)$ such that the functor $e \colon \cA \to \cT$ is fully faithful.
\end{Definition}

For a pseudo-tensor category $\cA$ with monoidal abelian envelope $(\cT,e)$, the universal property \eqref{eq:mon-ab-env-universal-property} implies that for every tensor category $\cT'$ with a faithful tensor functor $\imath \colon \cA \to \cT'$, there is a unique (up to monoidal natural isomorphism) exact tensor functor $F \colon \cT \to \cT'$ such that $\imath \cong F \circ e$ as monoidal functors.
Note that every exact tensor functor $F \colon \cT \to \cT'$ is automatically faithful by \cite[Theorem~2.4.1]{CEOPAbEnvQuotProp}, hence $F \circ e$ is also faithful.

Now let $\cC$ be a tensor category with a lower finite highest weight structure such that $\Tilt(\cC)$ is a monoidal subcategory closed under taking duals (so that $\Tilt(\cC)$ is a pseudo-tensor category).
In the context of monoidal Ringel duality, it would be desirable to understand whether the canonical embedding $\imath \colon \Tilt(\cC) \to \cC$ exhibits $\cC$ as a monoidal abelian envelope of $\Tilt(\cC)$.
While this need not be true in general (see \Cref{rem:abelian-envelope} below), we can prove that $\cC$ is indeed a monoidal abelian envelope of $\Tilt(\cC)$ provided that $\Tilt(\cC)$ admits a monoidal abelian envelope.

\begin{Theorem} \label{thm:abelian-envelope}
    Let $\cC$ be a tensor category with a lower finite highest weight structure such that $\Tilt(\cC)$ is a monoidal subcategory closed under taking duals.
     If $\Tilt(\cC)$ admits a weak monoidal abelian envelope, then the canonical embedding $\imath \colon \Tilt(\cC) \to \cC$ is a monoidal abelian envelope.
\end{Theorem}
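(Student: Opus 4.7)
Set $\cA = \Tilt(\cC)$ and fix a weak monoidal abelian envelope $(\cT, e)$ of $\cA$. The plan is to construct a comparison functor $F \colon \cT \to \cC$ from the universal property and then prove it is an equivalence of tensor categories. Since $\imath \colon \cA \hookrightarrow \cC$ is the inclusion of a full subcategory (hence fully faithful), once one has an equivalence $(\cC, \imath) \simeq (\cT, e)$ of weak monoidal abelian envelopes, the statement upgrades automatically to ``monoidal abelian envelope''.

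The canonical inclusion $\imath$ is a faithful tensor functor, so the universal property of $(\cT, e)$ produces an exact tensor functor $F \colon \cT \to \cC$, unique up to monoidal natural isomorphism, with $F \circ e \cong \imath$. By \cite[Theorem~2.4.1]{CEOPAbEnvQuotProp}, $F$ is automatically faithful.

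For essential surjectivity I invoke the tilting equivalence $K^b(\cA) \simeq D^b(\cC)$ of \eqref{eq:tiltingequivalence}. Given $M \in \cC$, pick a bounded tilting complex $T^\bullet \in K^b(\cA)$ with $H^0(T^\bullet) \cong M$ in $\cC$ and $H^i(T^\bullet)=0$ for $i \neq 0$. Applying $e$ termwise yields a bounded complex in $\cT$, and I set $\tilde M \coloneqq H^0_\cT(e(T^\bullet))$. Exactness of $F$ together with $F \circ e \cong \imath$ gives
\[
F(\tilde M) \cong H^0_\cC\bigl(F(e(T^\bullet))\bigr) \cong H^0_\cC(\imath(T^\bullet)) \cong M.
\]

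Fullness is the principal obstacle. Both $\cT$ and $\cC$ are rigid tensor categories, and $F$ preserves duals because it is monoidal. The tensor--hom adjunction $\Hom(X,Y) \cong \Hom(\mathbf{1}, X^{*} \otimes Y)$ therefore reduces fullness of $F$ to the surjectivity of
\[
\Hom_\cT(\mathbf{1}_\cT, N) \longrightarrow \Hom_\cC(\mathbf{1}_\cC, F(N))
\]
for every $N \in \cT$. A morphism $g \colon \mathbf{1}_\cC \to F(N)$ is naturally represented, via $K^b(\cA) \simeq D^b(\cC)$, as a chain map between tilting complexes representing $\mathbf{1}_\cC$ and $F(N)$; the task is to descend this chain-level datum to an actual morphism in $\cT$ using the universal property of $(\cT, e)$ applied to source and target. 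This is the hard step: essential surjectivity falls out of the tilting equivalence almost immediately, but fullness genuinely uses that the abelian structures on both $\cT$ and $\cC$ are each controlled by the shared pseudo-tensor subcategory $\cA$. Once $F$ is shown to be fully faithful and essentially surjective, it is an equivalence of tensor categories, the universal property of $(\cT, e)$ transports along $F$ to $(\cC, \imath)$, and full faithfulness of $\imath$ completes the proof.
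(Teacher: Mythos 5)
Your setup and first two steps agree with the paper: you invoke the universal property to produce a faithful exact tensor functor $F \colon \cT \to \cC$ with $F \circ e \cong \imath$. You also correctly identify the tilting equivalence $K^b(\cA) \simeq D^b(\cC)$ as the engine, and your essential-surjectivity computation (take a tilting complex $T^\bullet$ representing $M$, set $\tilde M = H^0_\cT(e(T^\bullet))$, and check $F(\tilde M) \cong M$ using exactness of $F$) is correct.

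However, there is a genuine gap: you do not actually prove fullness. Your reduction via rigidity to the surjectivity of $\Hom_\cT(\mathbf{1},N) \to \Hom_\cC(\mathbf{1},F(N))$ is fine in principle, but the subsequent ``descend a chain-level map to a morphism in $\cT$ using the universal property'' is not an argument — the universal property of $(\cT,e)$ governs tensor functors out of $\cT$, not lifting of individual morphisms. Moreover, to extract a morphism in $\cT$ from a chain map between $e(T^\bullet_{\mathbf{1}})$ and $e(T^\bullet_{F(N)})$ you would first need to know that the cohomology of $e(T^\bullet)$ in $\cT$ is concentrated in degree zero, and nothing you have said establishes this.

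This vanishing statement is in fact the crux, and the paper's proof is designed precisely around it. Rather than attempting to prove $F$ fully faithful directly, the paper upgrades your construction $M \mapsto \tilde M$ to a candidate quasi-inverse $G = H^0(-) \circ e_\mathrm{c} \circ \mathrm{tc} \colon \cC \to \cT$, and uses faithfulness of $F$ as a lever: since $F \circ H^i(-) \circ e_\mathrm{c} \circ \mathrm{tc} \cong H^i(-) \circ \imath_\mathrm{c} \circ \mathrm{tc}$ vanishes for $i \neq 0$, faithfulness forces $H^i(-) \circ e_\mathrm{c} \circ \mathrm{tc} = 0$ for $i \neq 0$. With that vanishing in hand one gets: $G$ is exact (long exact sequence in cohomology), $G$ is monoidal (Künneth), $F \circ G \cong \id_\cC$ (your computation, now natural), and finally $G \circ F \cong \id_\cT$ by the universal property, since $G \circ F \circ e \cong G \circ \imath \cong e$. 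So the paper never proves fullness of $F$ head-on; it builds the inverse and lets the universal property close the loop on the $\cT$ side. You should adopt this route — the object you already called $\tilde M$ is $G(M)$, and the missing steps are the cohomology-vanishing-via-faithfulness lemma and the invocation of the universal property to get $G \circ F \cong \id_\cT$, which together replace your unfinished fullness argument.
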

\begin{proof}
    Let us write $\cA = \Tilt(\cC)$ and let $e \colon \cA \to \cT$ be a weak monoidal abelian envelope of $\cA$ (so $\cT$ is a tensor category).
    We write $(\otimes',\mathbf{1}',\ldots)$ for the monoidal structure on $\cT$.
    By the universal property of monoidal abelian envelopes, there is a unique (up to monoidal natural isomorphism) exact tensor functor $F \colon \cT \to \cC$ such that $F \circ e \cong \imath$ as monoidal functors.
    Further observe that $F$ is faithful by \cite[Theorem~2.4.1]{CEOPAbEnvQuotProp}.
    
    In order to define a quasi-inverse of $F$, first observe that $\imath \colon \cA \to \cC$ gives rise to a functor
    \[ \imath_\mathrm{c} \colon K^b(\cA) \longrightarrow D^b(\cC) \]
    that sends a complex (or homomorphism) to itself, but now considered as an object (or homomorphism) in $D^b(\cC)$.
    The functor $\imath_\mathrm{c}$ can be canonically enhanced to a monoidal functor with respect to the tensor product of complexes $\otimes_\mathrm{c}$ on $K^b(\cA)$ and $D^b(\cC)$ (see Subsection~\ref{subsec:tensorproductofcomplexes}), and $\imath_\mathrm{c}$ is an equivalence of triangulated categories by \eqref{eq:tiltingequivalence}.
    We fix a monoidal quasi-inverse functor
    \[ \psi \colon D^b(\cC) \longrightarrow K^b(\cA) \]
    of $\imath_\mathrm{c}$ and consider the monoidal ``tilting complex'' functor
    \[ \mathrm{tc} \colon \cC \xrightarrow{~\eta~} D^b(\cC) \xrightarrow{~\psi~} K^b(\cA) , \]
    where $\eta \colon \cC \to D^b(\cC)$ is the canonical functor that sends an object of $\cC$ to itself (considered as a complex concentrated in homological degree zero).
    By construction, we have $H^0(-) \circ \mathrm{tc} \cong \id_\cC$ and $H^i(-) \circ \mathrm{tc} = 0$ for $i \neq 0$.
    Next observe that the functor $e \colon \cA \to \cT$ gives rise to a triangulated functor
    \[ e_\mathrm{c} \colon K^b(\cA) \longrightarrow D^b(\cT) \]
    that sends a complex $( \cdots \to T_i \to T_{i+1} \to \cdots)$ to the complex $( \cdots \to e(T_i) \to e(T_{i+1}) \to \cdots )$, and that $e_\mathrm{c}$ can be enhanced to a monoidal functor with respect to the tensor product of complexes $\otimes_\mathrm{c}$ on $K^b(\cA)$ and $\otimes'_\mathrm{c}$ on $D^b(\cT)$.
    Analogously, the exact tensor functor $F \colon \cT \to \cC$ gives rise to a monoidal functor
    \[ F_\mathrm{c} \colon D^b(\cT) \longrightarrow D^b(\cC) \]
    such that $H^i(-) \circ F_\mathrm{c} \cong F \circ H^i(-)$ for all $i \in \Z$.
    Thus, we obtain isomorphisms of functors
    \[ F \circ H^i(-) \circ e_\mathrm{c} \circ \mathrm{tc} \cong H^i(-) \circ F_\mathrm{c} \circ e_\mathrm{c} \circ \mathrm{tc} \cong H^i(-) \circ \imath_\mathrm{c} \circ \mathrm{tc} \cong H^i(-) \circ \eta \cong \begin{cases}
        \id_\cC & \text{if } i=0 , \\ 0 & \text{otherwise} ,
    \end{cases} \]
    where in the second isomorphism, we use that $F_\mathrm{c} \circ e_\mathrm{c} \cong \imath_\mathrm{c}$ because $F \circ e \cong \imath$.
    Since $F$ is faithful, this implies that $H^i(-) \circ e_\mathrm{c} \circ \mathrm{tc} = 0$ for all $i \neq 0$.
    We now define
    \[ G \coloneqq H^0(-) \circ e_\mathrm{c} \circ \mathrm{tc} \colon \quad \cC \longrightarrow \cT , \]
    so that $F \circ G \cong \id_\cC$ by the above computation.
    Furthermore, since $e_\mathrm{c} \circ \mathrm{tc}$ sends every short exact sequence in $\cC$ to a distinguished triangle in $D^b(\cT)$, we can use the long exact sequence in cohomology and the fact that $H^i(-) \circ e_\mathrm{c} \circ \mathrm{tc} = 0$ for $i \neq 0$ to conclude that $G$ is an exact functor.
    Using the Künneth formula (cf.\ \cite[Theorem 4.1]{Biglari-Künneth}), and again the fact that $H^i(-) \circ e_\mathrm{c} \circ \mathrm{tc} = 0$ for $i \neq 0$, we further see that $G$ can be enhanced to a monoidal functor (and we even have $F \circ G \cong \id_\cC$ as monoidal functors).
    Now since
    \[ G \circ F \circ e \cong G \circ \imath \cong e \]
    as monoidal functors, the universal property of the monoidal abelian envelope yields $G \circ F \cong \id_\cT$ as monoidal functors.
    We conclude that $\cT \simeq \cC$ as $\kk$-linear monoidal categories, as required.
\end{proof}

\begin{Remark} \label{rem:abelian-envelope}
    In the setting of \Cref{thm:abelian-envelope}, the embedding $\Tilt(\mathcal{C}) \to \mathcal{C}$ may fail to be a monoidal abelian envelope, but it is always a \emph{local abelian envelope} of $\Tilt(\mathcal{C})$ in the sense of \cite{Cou-Kernels}.
    This was pointed out to us by Kevin Coulembier and Andrew Snowden.
    Indeed, by Theorem A in \cite{Cou-Kernels}, the embedding $\Tilt(\mathcal{C}) \to \mathcal{C}$ factors through an exact tensor functor $F \colon \mathcal{T} \to \mathcal{C}$, where $\mathcal{T}$ is a local abelian envelope of $\Tilt(\mathcal{C})$.
    Now the arguments in the proof of \Cref{thm:abelian-envelope} (using the universal property of local abelian envelopes instead of the universal property of monoidal abelian envelopes) show that $\cT \simeq \cC$ as $\kk$-linear monoidal categories.
    An example for the failure of $\Tilt(\cC) \to \cC$ to be a monoidal abelian envelope is provided by the local abelian envelope of the ``second'' Delannoy category constructed in \cite{CS-second-Delannoy}, which appeared shortly after the first version of the present article; see in particular \cite[Remark~2.4]{CS-second-Delannoy}.
\end{Remark}

The earlier literature on monoidal abelian envelopes (preceding \cite{CEOPAbEnvQuotProp}) is often concerned with \emph{symmetric} pseudo-tensor categories, and tensor functors are replaced by \emph{symmetric} tensor functors in the universal property \eqref{eq:mon-ab-env-universal-property} (see \cite{CoulembierMonAbEnv,CEH}).
More precisely, for a symmetric pseudo-tensor category $\cA$, we define a \emph{weak symmetric monoidal abelian envelope} as a pair $(\cT,e)$, where $\cT$ is a faithful symmetric tensor category and $e \colon \cA \to \cT$ is a symmetric tensor functor, such that restriction along $e$ induces an equivalence
\[ \mathrm{sTens}^\mathrm{exact}( \cT , \cT' ) \xrightarrow{~~\sim~~} \mathrm{sTens}^\mathrm{faith}( \cA , \cT' ) \]
between categories of symmetric tensor functors that are exact or faithful, respectively, for every symmetric tensor category $\cT'$ (see also \cite[Section 2.5]{CEOPAbEnvQuotProp}).
A \emph{symmetric monoidal abelian envelope} of $\cA$ is a weak symmetric monoidal abelian envelope $(\cT,e)$ such that the functor $e \colon \cA \to \cT$ is fully faithful.
Using essentially the same arguments as in the proof of \Cref{thm:abelian-envelope} (but keeping track of symmetric braidings), we obtain the following variant for symmetric monoidal abelian envelopes.

\begin{Theorem}
\label{thm:abelian-envelope-symmetric}
    Let $\cC$ be a symmetric tensor category with a lower finite highest weight structure such that $\Tilt(\cC)$ is a monoidal subcategory closed under taking duals.
    If $\Tilt(\cC)$ admits a weak symmetric monoidal abelian envelope, then the canonical embedding $\imath \colon \Tilt(\cC) \to \cC$ is a symmetric monoidal abelian envelope.
\end{Theorem}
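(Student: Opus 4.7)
The plan is to follow the proof of \Cref{thm:abelian-envelope} essentially verbatim, keeping track of symmetric braidings at each step. Let $\cA = \Tilt(\cC)$ and let $e \colon \cA \to \cT$ be a weak symmetric monoidal abelian envelope. Since the embedding $\imath \colon \cA \to \cC$ is a faithful symmetric tensor functor (inheriting the braiding from $\cC$), the symmetric universal property yields a unique (up to monoidal natural isomorphism) exact symmetric tensor functor $F \colon \cT \to \cC$ with $F \circ e \cong \imath$ as symmetric monoidal functors, and $F$ is automatically faithful by \cite[Theorem~2.4.1]{CEOPAbEnvQuotProp}.

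To construct a quasi-inverse, I would reuse the monoidal tilting complex functor
\[ \mathrm{tc} \colon \cC \xrightarrow{~\eta~} D^b(\cC) \xrightarrow{~\psi~} K^b(\cA) , \]
where $\eta$ is the canonical embedding and $\psi$ is a monoidal quasi-inverse of the monoidal equivalence $\imath_\mathrm{c} \colon K^b(\cA) \to D^b(\cC)$ from \eqref{eq:tiltingequivalence}. The key observation is that the symmetric braiding on $\cC$ induces symmetric braidings on $D^b(\cC)$ and $K^b(\cA)$ (using the tensor product of complexes, see Subsection~\ref{subsec:tensorproductofcomplexes}), with respect to which $\eta$ and $\imath_\mathrm{c}$ are symmetric monoidal; hence $\psi$ (and therefore $\mathrm{tc}$) may be chosen symmetric monoidal. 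Analogously, $e \colon \cA \to \cT$ and $F \colon \cT \to \cC$ induce symmetric monoidal functors $e_\mathrm{c} \colon K^b(\cA) \to D^b(\cT)$ and $F_\mathrm{c} \colon D^b(\cT) \to D^b(\cC)$.

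Then I would set $G \coloneqq H^0(-) \circ e_\mathrm{c} \circ \mathrm{tc} \colon \cC \to \cT$. The exactness of $G$ and the vanishing $H^i(-) \circ e_\mathrm{c} \circ \mathrm{tc} = 0$ for $i \neq 0$ follow exactly as in \Cref{thm:abelian-envelope}, using the faithfulness of $F$ and the long exact sequence in cohomology. The Künneth formula (cf.\ \cite[Theorem~4.1]{Biglari-Künneth}) then promotes $G$ to a symmetric monoidal functor, because the Künneth isomorphism is compatible with the symmetric braiding inherited from the tensor product of complexes. The same computation as in the non-symmetric case shows $F \circ G \cong \id_\cC$ as symmetric monoidal functors, and then $G \circ F \circ e \cong G \circ \imath \cong e$ as symmetric monoidal functors; applying the symmetric universal property of $e$ produces an isomorphism $G \circ F \cong \id_\cT$ of symmetric monoidal functors.

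The main obstacle, such as it is, lies in verifying that the chosen compatibility isomorphisms genuinely respect the braidings; in particular, one must check that the Künneth isomorphism used to monoidalize $G$ is symmetric (which is standard but should be stated explicitly), and that a monoidal quasi-inverse $\psi$ of the symmetric monoidal equivalence $\imath_\mathrm{c}$ can be chosen to be symmetric monoidal (which follows from the general fact that a monoidal natural isomorphism between symmetric monoidal functors between symmetric monoidal categories automatically intertwines the braidings). Once these book-keeping items are in place, the entire argument of \Cref{thm:abelian-envelope} carries through unchanged.
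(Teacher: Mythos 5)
Your proposal matches the paper's intended argument exactly: the paper itself states that \Cref{thm:abelian-envelope-symmetric} follows by ``the same arguments as in the proof of \Cref{thm:abelian-envelope} (but keeping track of symmetric braidings),'' which is precisely what you do. The bookkeeping steps you flag — that $\imath_\mathrm{c}$, $e_\mathrm{c}$, $F_\mathrm{c}$, $\mathrm{tc}$, and the K\"unneth isomorphism all respect the symmetric braidings induced by the tensor product of complexes — are the correct and only nontrivial additions needed, so the proof is sound.
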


In the setting of \Cref{thm:abelian-envelope}, the following criterion can be used in special cases to identify whether $\cC$ is a monoidal abelian envelope of $\Tilt(\cC)$.
(In the symmetric setting of \Cref{thm:abelian-envelope-symmetric}, this is proven in \cite[Proposition~3.2.1]{CEH}.)
\begin{Lemma}
\label{lem:abelian-envelope-tilting-tensor}
    Let $\cC$ be a tensor category with a lower finite highest weight structure such that $\Tilt(\cC)$ is a monoidal subcategory closed under taking duals.
    Further suppose that one of the following conditions holds.
    \begin{enumerate}[(a)]
        \item For every object $X$ of $\cC$, there is a tilting object $T$ of $\cC$ such that $T \o X$ is a tilting object.
        \item The monoidal category $\cC$ is braided, and for every simple object $L$ of $\cC$, there is a tilting object $T$ of $\cC$ such that $T \o L$ is a tilting object.
    \end{enumerate}
    Then the canonical embedding $\Tilt(\cC) \to \cC$ is a monoidal abelian envelope.
\end{Lemma}

\begin{proof}
    First suppose that for every object $X$ of $\cC$, there is a tilting object $T$ of $\cC$ such that $T \o X$ is a tilting object.
    As $X$ is a quotient of $T^* \otimes T \otimes X$, we conclude that every object of $\cC$ is a quotient of a tilting object.
    Now \cite[Theorem~2.2.1]{CEOPAbEnvQuotProp} implies that the canonical embedding $\Tilt(\cC) \to \cC$ is a monoidal abelian envelope.
    Now suppose that $\cC$ is braided and that for every simple object $L$ of $\cC$, there is a tilting object $T$ such that $L \otimes T$ is a tilting object.
    For an object $X$ of $\cC$ with simple composition factors $L_1,\ldots,L_n$, we choose tilting objects $T_1,\ldots,T_n$ such that $T_i \o L_i$ is a tilting object for $i=1,\ldots,n$.
    Then for $T \coloneqq T_1 \otimes \cdots \otimes T_n$, the tensor products $T \otimes L_i$ are tilting objects for $i=1,\ldots,n$ because $\cC$ is braided.
    This implies that $T \otimes X$ has a filtration whose subquotients are tilting objects. 
    Since tilting objects have no higher extensions (see \Cref{lem:Extvanishingstandardcostandard}), this means $T \o X$ is a direct sum of tilting objects, and hence is a tilting object.
    This verifies condition (a), and so the canonical embedding $\Tilt(\cC) \to \cC$ is a monoidal abelian envelope.
\end{proof}

For instance, the hypothesis of \Cref{lem:abelian-envelope-tilting-tensor}(b) about tensor products involving simple objects and tilting objects is known to hold for for categories of representations of algebraic groups (see the proof of Theorem 3.3.1 in \cite{CEH}).

Still in the setting of \Cref{thm:abelian-envelope}, another criterion for deciding whether $\cC$ is a monoidal abelian envelope of $\Tilt(\cC)$ is based on the block structure of $\Tilt(\cC)$.
Following \cite[Section 5.1]{ComesOstrikBlocksRepSt}, a \emph{block} of a $\kk$-linear idempotent-complete additive category $\cA$ is an equivalence class of indecomposable objects of $\cA$ with respect to the finest equivalence relation such that two indecomposable objects $X$ and $Y$ of $\cA$ are equivalent if $\Hom_\cA(X,Y) \neq 0$.
We also call \emph{block} any full subcategory of $\cA$ whose objects are finite direct sums of indecomposable objects in a single block.
A block is called \emph{almost trivial} if it contains a unique indecomposable object of $\cA$ (up to isomorphism) and \emph{trivial} if the endomorphism space of the unique indecomposable object is one-dimensional.

\begin{Proposition}
\label{prop:abelian-envelope-trivial-block}
    Let $\cC$ be a tensor category with a lower finite highest weight structure such that $\Tilt(\cC)$ is a monoidal subcategory closed under taking duals.
    Suppose that $\Tilt(\cC)$ has an almost trivial block.
    Then the canonical embedding $\Tilt(\cC) \to \cC$ is a monoidal abelian envelope.
\end{Proposition}
\begin{proof}
    Let $(\Lambda,\leq)$ be the weight poset of $\cC$ and let $\lambda \in \Lambda$ such that $T(\lambda)$ is the unique indecomposable object in an almost trivial block of $\Tilt(\cC)$.
    Then we have $\Hom_\cC\big( T(\lambda) , T(\mu) \big) = 0$ for all $\mu \in \Lambda \setminus \{ \lambda \}$, and using the equivalence
    \[ K^b\big( \Tilt(\cC) \big) \simeq D^b(\cC) \]
    from \eqref{eq:tiltingequivalence}, it follows that $T(\lambda)$ is injective and projective in $\cC$.
    Since $\cC$ is a rigid monoidal category, the tensor product $T(\lambda) \otimes X$ is injective and projective for every object $X$ of $\cC$, and since injective objects admit $\nabla$-filtrations and projective objects admit $\Delta$-filtrations (see \cite[Theorem 3.56]{BrundanStroppelSemiInfiniteHighestWeight}), we conclude that $T(\lambda) \otimes X$ is a tilting object.
    Now the claim follows from \Cref{lem:abelian-envelope-tilting-tensor}.
\end{proof}

\begin{Remark}
    In the setting of \Cref{prop:abelian-envelope-trivial-block}, every almost trivial block of $\Tilt(\cC)$ is in fact a trivial block.
    Indeed, if $T(\lambda)$ is the unique indecomposable tilting object in an almost trivial block, then for $\mu \in \Lambda \setminus \{ \lambda \}$, we have
    \[ 0 = \dim \Hom_\cC\big( T(\lambda) , T(\mu) \big) = \sum_{\nu \in \Lambda} \big[ T(\lambda) : \Delta(\nu) \big]_\Delta \cdot \big[ T(\mu) : \nabla(\nu) \big]_\nabla \geq \big[ T(\lambda) : \Delta(\mu) \big]_\Delta \geq 0 , \]
    where $[-:\Delta(\nu)]_\Delta$ denotes the multiplicity of $\Delta(\nu)$ in a $\Delta$-filtration (and similarly for $\nabla$-filtrations).
    This implies that $T(\lambda) = \Delta(\lambda)$, hence $T(\lambda)$ has one-dimensional endomorphism space.
\end{Remark}

\begin{Remark}
\label{rem:RepSt-trivial-blocks}
    The hypothesis of \Cref{prop:abelian-envelope-trivial-block} about (almost) trivial blocks is known to hold for the interpolation categories $\RepSt$ by the results of Comes--Ostrik \cite{ComesOstrikBlocksRepSt}, so \Cref{prop:abelian-envelope-trivial-block} and \Cref{thm:knop-highest-weight-envelope} below explain the existence of monoidal abelian envelopes with lower finite highest weight structures for these categories.
\end{Remark}

\section{Sam--Snowden's triangular categories}
\label{sec:triangularcategories}

In \cite{SamSnowdenTriangular}, Sam--Snowden have developed a formalism of \emph{(monoidal) triangular categories},%
\footnote{Not to be confused with triangulated categories.}
which can be thought of as a more categorical variant of (sandwich) cellular algebras \cite{GrahamLehrerCellular,TubbenhauerSandwichCellular} or cellular categories \cite{WestburyCellularCategories}.
In this section, we explain how a monoidal triangular category gives rise to an upper finite highest weight category with a monoidal structure that satisfies the conditions \eqref{eq:Deltatensor} and \eqref{eq:DeltaTor}, and hence via monoidal Ringel duality to a lower finite highest weight category with a compatible monoidal structure.
Throughout the section, we fix an algebraically closed field $\kk$ and an essentially small $\kk$-linear category $\cC$ with finite-dimensional $\Hom$-spaces.
We write $\cC(X,Y) = \Hom_\cC(X,Y)$ for the $\Hom$-spaces in $\cC$ and $|\cC|:=\{|X|:X\in\cC\}$ for the set of isomorphism classes of objects in $\cC$.

\begin{Definition}[{\cite[Definition~4.1]{SamSnowdenTriangular}}]
\label{def:triangularstructure}
A \emph{triangular structure} on $\cC$ is a pair of wide subcategories $(\cU,\cD)$ such that for all $X,Y,Z\in\cC$, we have:

(T1) $\cM(X):=\cU(X,X)=\cD(X,X)$, and this $\kk$-algebra is semisimple.

(T2) There is a partial order $\le$ on $|\cC|$ resulting in a lower finite poset (i.e., the set 
$$
\{|X'|\in|\cC|:|X'|\le|X|\}$$
is finite, for all $X\in\cC$) such that $\cU(X,Y) \neq 0$ implies $|X|\le|Y|$ and $\cD(X,Y) \neq 0$ implies $|Y| \leq |X|$. 

(T3) Composition in $\cC$ induces an isomorphism of vector spaces
$$
\cU(Y,Z) \o_{\cM(Y)} \cD(X,Y) \to \cC(X,Y) .
$$
\end{Definition}

Note that if $(\cU,\cD)$ is a triangular structure on $\cC$, then $(\cD^\mathrm{op},\cU^\mathrm{op})$ is a triangular structure on the opposite category $\cC^\mathrm{op}$, with respect to the same partial order $\leq$ on $|\cC| = |\cC^\mathrm{op}|$.

In \Cref{sec:tensorenvelopes} below, we will use the following criterion to show that certain interpolation categories appearing in \cite{KnopTensorEnvelopes} admit triangular structures.

\begin{Proposition}[{\cite[Proposition~4.33]{SamSnowdenTriangular}}] \label{prop:criterion-triangular} Assume $\cC$ as above satisfies (T1) and (T2) for wide subcategories $\cU$, $\cD$ and a poset structure on $|\cC|$. Assume also there are collections of morphisms $\cC'$, $\cD'$, $\cU'$ such that:
\begin{itemize}
\item The elements from $\cC'$ form a basis in every hom-space in $\cC$.
\item The elements from $\cU'$ and $\cD'$ lie in $\cC'$ and span every hom-space in $\cU$ and $\cD$, respectively.
\item Compositions $\beta\alpha$ of compatible morphisms $\alpha\in\cD'$, $\beta\in\cU'$ are elements of $\cC'$.
\item Any morphism in $\cC'$ factors as a composition $\beta\alpha$, with $\alpha\in\cD'$ and $\beta\in\cU'$. For any second factorization $\beta\alpha=\beta'\alpha'$ of the same morphism, with $\alpha'\in\cD$ and $\beta'\in\cU$, there is an isomorphism $i$ contained both in $\cU$ and $\cD$ such that $\alpha'=i\alpha$ and $\beta'=\beta i^{-1}$.  
\end{itemize}
Then $(\cU,\cD)$ is a triangular structure on $\cC$.
\end{Proposition}

Now suppose that $\cC$ admits a triangular structure $(\mathcal{U},\mathcal{D})$ and fix a partial order $\leq$ as in (T2).
Further let $\cM$ be the wide subcategory of $\cC$ with $\Hom$-spaces $\cM(X,Y) = \cU(X,Y) \cap \cD(X,Y)$, so $\cM(X,Y) \neq 0$ only if $|X| = |Y|$.
Following \cite[Section 4.3]{SamSnowdenTriangular}, we write $\Lambda$ for the set of isomorphism classes of simple objects in $\PSh( \mathcal{M}^\mathrm{op} )$.
For all $\lambda \in \Lambda$, we fix a representative $S_\lambda$ for the isomorphism class $\lambda$ and write $\mathrm{supp}(\lambda) = |X|$ for the unique isomorphism class $|X| \in |\cC|$ of objects in $\cC$ such that $S_\lambda(X) \neq 0$.
Then the partial order $\leq$ on $|\cC|$ gives rise to a partial order $\leq$ on $\Lambda$ via $\lambda < \mu$ if and only if $\mathrm{supp}(\lambda) < \mathrm{supp}(\mu)$ for $\lambda,\mu \in \Lambda$, making $(\Lambda,\leq)$ a lower finite poset.

For all $\lambda \in \Lambda$, Sam--Snowden define in \cite[Section 4.5]{SamSnowdenTriangular} a standard object $\Delta(\lambda)$ in $\PSh^\mathrm{fd}(\cC^\mathrm{op})$ and prove that $\Delta(\lambda)$ has a unique simple quotient $L(\lambda)$.%
\footnote{To see that $\Delta(\lambda)$ belongs to the subcategory $\PSh^\mathrm{fd}(\cC^\mathrm{op})$ of $\PSh(\cC^\mathrm{op})$, one uses \cite[Proposition 4.5]{SamSnowdenTriangular}.}
Furthermore, they show that the objects $L(\lambda)$ for $\lambda \in \Lambda$ are a set of representatives for the isomorphism classes of simple objects in $\PSh(\cC^\mathrm{op})$ and that $[ \Delta(\lambda) : L(\mu) ] = 0$ unless $\lambda \leq \mu$ for $\lambda , \mu \in \Lambda$.
By \cite[Proposition 4.28]{SamSnowdenTriangular}, the simple object $L(\lambda)$ admits a projective cover $P(\lambda)$ in $\PSh(\cC^\mathrm{op})$ and $P(\lambda)$ satisfies the condition $(P\Delta)$ of Subsection~\ref{subsec:highestweightcategories} with respect to the standard objects $\Delta(\mu)$ and the partial order $\leq^\mathrm{op}$ on $\Lambda$.
This proves the following result, which is the main reason for our interest in the formalism of triangular structures.

\begin{Proposition}
 \label{prop:triangularstructurehighestweightcategory}
    If $\cC$ admits a triangular structure $(\mathcal{U},\mathcal{D})$, then $\PSh^\mathrm{fd}(\cC^\mathrm{op})$ is an upper finite highest weight category with weight poset $(\Lambda,\leq^\mathrm{op})$.
\end{Proposition}

In \cite[Section 4.9]{SamSnowdenTriangular}, it is explained that all representable presheaves in $\PSh(\cC^\mathrm{op})$ (i.e.\ presheaves in the image of the Yoneda embedding $\cC^\mathrm{op} \to \PSh(\cC^\mathrm{op})$) are finitely generated projective and that all finitely generated projective objects in $\PSh(\cC^\mathrm{op})$ are direct summands of direct sums of representable presheaves.
In particular, the Yoneda embedding $\cC^\mathrm{op} \to \PSh(\cC^\mathrm{op})$ gives rise to an equivalence
\begin{equation} \label{eq:karoubitriangularprojective}
    \Kar(\cC^\mathrm{op}) \simeq \Proj^\mathrm{fg}\big( \PSh(\cC^\mathrm{op}) \big) ,
\end{equation}
where $\Kar(\cC)$ denotes the Karoubi envelope of $\cC$ as in Subsection~\ref{subsec:Karoubi}.

Assume from now on that $\cC$ is as above, but also monoidal with tensor product $\o$.

\begin{Definition}[{\cite[4.11]{SamSnowdenTriangular}}] A \emph{monoidal triangular structure} on $\cC$ is a triangular structure $(\cU,\cD)$ such that $\cU$ and $\cD$ are closed under tensor products and all associators and unitors of $\cC$ are morphisms both in $\cU$ and $\cD$.
\end{Definition}

Now suppose that $\cC$ admits a monoidal triangular structure.
Then the category $\mathcal{D} \coloneqq \PSh(\cC^\mathrm{op})$ is an upper finite highest weight category by \Cref{prop:triangularstructurehighestweightcategory}, and the Day convolution tensor product (see \Cref{rem:Dayconvolution}) equips $\PSh(\cC^\mathrm{op}) \simeq \mathcal{D}^\mathrm{cc}$ with a monoidal structure such that the tensor product is cocontinuous in both arguments.
Furthermore, since the Yoneda embedding $\cC^\mathrm{op} \to \mathcal{D}^\mathrm{cc}$ is monoidal and induces an equivalence $\Kar(\cC^\mathrm{op}) \simeq \Proj^\mathrm{fg}( \mathcal{D} )$ by \eqref{eq:karoubitriangularprojective}, the subcategory $\Proj^\mathrm{fg}( \mathcal{D} )$ is closed under tensor products and we have
\begin{equation} \label{eq:karoubitriangularprojectivemonoidal}
    \Kar(\cC^\mathrm{op}) \simeq \Proj^\mathrm{fg}( \mathcal{D} )
\end{equation}
as monoidal categories.
In particular, we are in the situation of \Cref{prop:torvanishingYtensor}.
The two following results from \cite{SamSnowdenTriangular} provide criteria for checking the conditions \eqref{eq:Deltatensor} and \eqref{eq:DeltaTor} in $\mathcal{D}^\mathrm{cc}$.

\begin{Proposition}[{\cite[Proposition~3.27]{SamSnowdenTriangular}}]
\label{prop:tensor-exact}
    Assume $\chark(\kk)=0$ and $\cU$ is an essentially small $\kk$-linear monoidal category that has a wide subcategory $\cU'$ such that:
\begin{itemize}
\item The elements from $\cU'$ form a basis in every hom-space in $\cU$.
\item $\cU'$ has finite hom-sets.
\item $\cU'$ is closed under the tensor product $\o$ in $\cU$.
\item For every $X \in \cU'$, there are morphisms $(\varphi_i\:X\to Y_i\o Z_i)_{i\in I}$ in $\cU'$ such that for any $Y,Z\in\cU$,
\begin{enumerate}
    \item[(a)] any morphism $\varphi\:X\to Y\o Z$ in $\cU'$ factors as $\varphi=(\alpha\o\beta)\varphi_i$ for some $i$ and $\alpha,\beta\in\cU'$, and
    \item[(b)] for any two such factorizations $\varphi=(\alpha\o\beta)\varphi_i=(\alpha'\o\beta')\varphi_{i'}$ with $\alpha',\beta'\in\cU'$, we have $i=i'$ and there are automorphisms $\sigma_\alpha\in\Aut_{\cU'}(y_i),\sigma_\beta\in\Aut_{\cU'}(z_i)$ such that 
$$
\alpha'\o\beta'=(\alpha\o\beta)(\sigma_\alpha\o\sigma_\beta)
\text{ and }
\varphi_i = (\sigma_\alpha\o\sigma_\beta)\varphi_i .
$$
\end{enumerate} 
\end{itemize}
Then the Day convolution tensor product on $\PSh(\cU)$ is exact in both arguments.
\end{Proposition}

\begin{Proposition}[{\cite[Proposition 4.32]{SamSnowdenTriangular}}]
\label{prop:monoidaltriangularDeltatensorDeltator}
    Let $\cC$ be an essentially small $\kk$-linear monoidal category with a monoidal triangular structure $(\cU,\cD)$.
    If the Day convolution tensor product on $\PSh(\cD^\mathrm{op})$ is exact in both arguments, then $\PSh^\mathrm{fd}(\cC^\mathrm{op})$ satisfies the conditions \eqref{eq:Deltatensor} and \eqref{eq:DeltaTor}.
\end{Proposition}

Before we state the main result of this section, we need to introduce some additional terminology.

\begin{Definition}
\label{def:orderpreservingduality_triangular}
    For an essentially small $\kk$-linear (monoidal) category $\cA$ with a (monoidal) triangular structure $(\cU,\cD)$ and a partial order $\leq$ on $|\cA|$ as in (T2) of \Cref{def:triangularstructure}, an \emph{order preserving (monoidal) duality} is a $\kk$-linear (monoidal) equivalence $\tau \colon \cA \to \cA^\mathrm{op}$ (respectively $\tau \colon \cA^\mathrm{rev} \to \cA^\mathrm{op}$) such that the map $|X| \mapsto |\tau X|$ is an order automorphism on $|\cA|$ with respect to $\leq$.
\end{Definition}

\begin{Remark}
\label{rem:orderpreservingduality_triangularvsKaroubi}
    Using the notation of \Cref{def:orderpreservingduality_triangular}, note that the indecomposable objects of
    \[ \Kar(\cA) \simeq \Proj^\mathrm{fg}\big( \PSh(\cA) \big) \]
    are canonically labeled by $\Lambda$, cf.\ \eqref{eq:karoubitriangularprojective} and \Cref{prop:triangularstructurehighestweightcategory}.
    An order preserving duality on $\cA$ (in the sense of \Cref{def:orderpreservingduality_triangular}) induces an order preserving duality on $\Kar(\cA)$ (in the sense of \Cref{def:orderpreservingduality}) by the definition of the partial order $\leq$ on $\Lambda$.
\end{Remark}

\begin{Theorem} \label{thm:highest-weight-envelope}
	Let $\cA$ be an essentially small $\kk$-linear monoidal category with a monoidal triangular structure $(\cU,\cD)$, and suppose that the Day convolution tensor product on $\PSh( \cD^\mathrm{op} )$ is exact in both arguments.
    \begin{enumerate}
        \item There is a lower finite highest weight category $\cC$ with a monoidal structure $(\otimes,\mathbf{1},\ldots)$ such that $\otimes$ is right exact in both arguments, $\Tilt(\cC)$ is closed under tensor products and there is a $\kk$-linear monoidal equivalence
        \[ \Tilt(\cC) \simeq \Kar(\cA) . \]
        \item If $\cA$ has a (symmetric) braiding, then there is a (symmetric) braiding on $\cC$ such that the monoidal equivalence $\Tilt(\cC) \simeq \Kar(\cA)$ is braided.
        \item If $\cA$ admits an order preserving monoidal duality, then $\otimes$ is exact in both arguments.
        \item If $\cA$ is rigid and the duality functors
        \[ (-)^* \colon \cA \longrightarrow \cA^\mathrm{op} , \hspace{2cm} \prescript{*}{}{(-)} \colon \cA \longrightarrow \cA^\mathrm{op} \]
        are order preserving dualities, then $\cC$ is rigid and $\Tilt(\cC)$ is closed under taking duals.
    \end{enumerate}
\end{Theorem}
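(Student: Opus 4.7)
The plan is to construct $\cC$ as the Ringel dual of an upper finite highest weight category $\mathcal{D}$ built from $\cA$ via Sam--Snowden's triangular formalism, and then to read off the monoidal structure on $\cC$ from \Cref{thm:monoidalRingelduality_uppertolower}.

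First I would set $\mathcal{D} := \PSh^\mathrm{fd}(\cA^\mathrm{op})$, which is an upper finite highest weight category by \Cref{prop:triangularstructurehighestweightcategory}. Equipping $\mathcal{D}^\mathrm{cc} \simeq \PSh(\cA^\mathrm{op})$ with its Day convolution monoidal structure (\Cref{rem:Dayconvolution}) yields a tensor product that is cocontinuous in both arguments, and the monoidal Yoneda embedding $\cA^\mathrm{op} \to \mathcal{D}^\mathrm{cc}$ restricts via \eqref{eq:karoubitriangularprojectivemonoidal} to a monoidal equivalence $\Kar(\cA^\mathrm{op}) \simeq \Proj^\mathrm{fg}(\mathcal{D})$, so $\Proj^\mathrm{fg}(\mathcal{D})$ is a monoidal subcategory of $\mathcal{D}^\mathrm{cc}$. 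A (symmetric) braiding on $\cA$ transports to $\mathcal{D}^\mathrm{cc}$ through \Cref{rem:Dayconvolutionbraided}, and any rigid structure on $\cA$ descends to $\Proj^\mathrm{fg}(\mathcal{D})$ through the same equivalence.

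Next I would verify the hypotheses of \Cref{thm:monoidalRingelduality_uppertolower}. The assumed exactness of Day convolution on $\PSh(\cD^\mathrm{op})$ combined with \Cref{prop:monoidaltriangularDeltatensorDeltator} yields \eqref{eq:Deltatensor} and \eqref{eq:DeltaTor} for $\mathcal{D}$, and then \Cref{prop:torvanishingYtensor} yields \eqref{eq:Ytensor}. The hard part will be obtaining \eqref{eq:Xtensor}: for parts (3) and (4) an order-preserving (monoidal) duality on $\cA$ descends to $\Proj^\mathrm{fg}(\mathcal{D}) \simeq \Kar(\cA^\mathrm{op})$ via (the monoidal refinement of) \Cref{rem:orderpreservingduality_triangularvsKaroubi}, and \Cref{cor:dualityXtensorYtensor} then upgrades \eqref{eq:Ytensor} to \eqref{eq:Xtensor}; for part (1), without a duality hypothesis, one appeals to \Cref{prop:oppositeXtensorYtensor}, reducing \eqref{eq:Xtensor} for $\mathcal{D}$ to \eqref{eq:Ytensor} for the opposite upper finite highest weight category $\mathcal{D}^\mathrm{op}$, which in turn is handled by applying the Sam--Snowden machinery to the opposite monoidal triangular structure $(\cD^\mathrm{op}, \cU^\mathrm{op})$ on $\cA^\mathrm{op}$ and tracing the symmetric roles of $\cU$ and $\cD$. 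Overcoming this asymmetry (between the one-sided exactness hypothesis and the two-sided condition needed on $\cC$) is the main technical obstacle.

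Finally, set $\cC := \prescript{\vee}{}{\mathcal{D}}$. Part (1) follows from \Cref{thm:monoidalRingelduality_uppertolower}(1), which furnishes the right-exact monoidal structure on $\cC$ together with the monoidal equivalence $\Tilt(\cC) \simeq \Proj^\mathrm{fg}(\mathcal{D}) \simeq \Kar(\cA)$. Part (2) follows by tracking the induced (symmetric) braiding through the same theorem. Part (3) is \Cref{thm:monoidalRingelduality_uppertolower}(2) once both \eqref{eq:Xtensor} and \eqref{eq:Ytensor} are established, the conclusion being the exactness of $\otimes$. Part (4) is \Cref{thm:monoidalRingelduality_uppertolower}(3), with the rigid duality functors on $\cA$ supplying the required order-preserving dualities on $\Proj^\mathrm{fg}(\mathcal{D})$ and thereby yielding rigidity of $\cC$ and closure of $\Tilt(\cC)$ under duals.
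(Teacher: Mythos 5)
Your overall plan---pass to the upper finite highest weight category $\mathcal{D} = \PSh^\mathrm{fd}(\cA^\mathrm{op})$ via \Cref{prop:triangularstructurehighestweightcategory}, verify the tensor hypotheses using Propositions~\ref{prop:monoidaltriangularDeltatensorDeltator}, \ref{prop:torvanishingYtensor} and \ref{prop:oppositeXtensorYtensor}, and then apply \Cref{thm:monoidalRingelduality_uppertolower}---is the right one, but you apply the monoidal Ringel duality to the wrong category, and this creates the ``technical obstacle'' you flag as well as an inconsistency in your conclusion.

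First, the inconsistency: you correctly observe that \eqref{eq:karoubitriangularprojectivemonoidal} gives a monoidal equivalence $\Kar(\cA^\mathrm{op}) \simeq \Proj^\mathrm{fg}(\mathcal{D})$, but at the end you assert $\Tilt(\cC) \simeq \Proj^\mathrm{fg}(\mathcal{D}) \simeq \Kar(\cA)$. Since nothing identifies $\cA$ with $\cA^\mathrm{op}$, applying \Cref{thm:monoidalRingelduality_uppertolower} to $\mathcal{D}$ would yield $\Tilt(\cC) \simeq \Kar(\cA^\mathrm{op})$, not $\Kar(\cA)$. Second, and more seriously, to apply \Cref{thm:monoidalRingelduality_uppertolower}(1) to $\mathcal{D}$ you need \eqref{eq:Xtensor} for $\mathcal{D}$, which by \Cref{prop:oppositeXtensorYtensor} is equivalent to \eqref{eq:Ytensor} for $\mathcal{D}^\mathrm{op} \simeq \PSh^\mathrm{fd}(\cA)$. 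Your suggestion to get this from the opposite triangular structure $(\cD^\mathrm{op}, \cU^\mathrm{op})$ on $\cA^\mathrm{op}$ would, via \Cref{prop:monoidaltriangularDeltatensorDeltator}, require exactness of Day convolution on $\PSh(\cU)$---but the hypothesis of the theorem only gives exactness on $\PSh(\cD^\mathrm{op})$, and these are not identified in the general Sam--Snowden framework. So you cannot close that gap without an additional assumption.

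The fix, which is what the paper does, is a one-line change: apply \Cref{thm:monoidalRingelduality_uppertolower} to $\mathcal{B}^\mathrm{op}$ rather than to $\mathcal{B} := \PSh^\mathrm{fd}(\cA^\mathrm{op})$. Then \eqref{eq:Xtensor} for $\mathcal{B}^\mathrm{op}$ is \emph{exactly} \eqref{eq:Ytensor} for $\mathcal{B}$ by \Cref{prop:oppositeXtensorYtensor}, which you have already verified, and \Cref{rem:oppositeupperfinite} gives $\Proj^\mathrm{fg}(\mathcal{B}^\mathrm{op}) \simeq \Kar(\cA^\mathrm{op})^\mathrm{op} \simeq \Kar(\cA)$, yielding the correct conclusion. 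No second triangular structure or hypothesis about $\cU$ is needed. Your treatment of parts (2)--(4) then goes through once this reorientation is made.
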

\begin{proof}
    Recall from \Cref{prop:triangularstructurehighestweightcategory} that $\mathcal{B} \coloneqq \PSh^\mathrm{fd}(\cA^\mathrm{op})$ is an upper finite highest weight category.
    Note that $\mathcal{B}$ satisfies the conditions \eqref{eq:Deltatensor} and \eqref{eq:DeltaTor} by \Cref{prop:monoidaltriangularDeltatensorDeltator}, and hence $\mathcal{B}$ satisfies the condition \eqref{eq:Ytensor} by \Cref{prop:torvanishingYtensor}.
    Therefore, the upper finite highest weight category $\mathcal{B}^\mathrm{op}$ satisfies the condition \eqref{eq:Xtensor} by \Cref{prop:oppositeXtensorYtensor}.
    Further recall from \eqref{eq:karoubitriangularprojectivemonoidal} that the Yoneda embedding
    induces a monoidal equivalence $\Kar(\cA) \simeq \Proj^\mathrm{fg}(\mathcal{B}^\mathrm{op})$.
    Now \Cref{thm:monoidalRingelduality_uppertolower} shows that the Ringel dual $\cC \coloneqq \prescript{\vee}{}{\mathcal{B}}^\mathrm{op}$ admits a monoidal structure $(\otimes,\mathbf{1},\ldots)$ such that $\otimes$ is right exact in both arguments, $\Tilt(\cC)$ is a monoidal subcategory and there is a $\kk$-linear monoidal equivalence
    \[ \Tilt(\cC) \simeq \Proj^\mathrm{fg}(\mathcal{B}^\mathrm{op}) \simeq \Kar(\cA) . \]
    If $\cA$ is (symmetrically) braided, then so is $\mathcal{B}^\mathrm{cc}$ by \Cref{rem:Dayconvolutionbraided}, and hence $\cC$ is (symmetrically) braided and $\Tilt(\cC) \simeq \Kar(\cA)$ is a braided monoidal equivalence, again by \Cref{thm:monoidalRingelduality_uppertolower}.
    
    If $\cA$ admits an order preserving monoidal duality, then the latter induces an order preserving monoidal duality on $\Kar(\cA) \simeq \Proj^\mathrm{fg}(\mathcal{B}^\mathrm{op})$ by \Cref{rem:orderpreservingduality_triangularvsKaroubi}.
    As $\mathcal{B}^\mathrm{op}$ satisfies the condition \eqref{eq:Xtensor}, this implies that $\mathcal{B}^\mathrm{op}$ also satisfies the condition \eqref{eq:Ytensor} by \Cref{cor:dualityXtensorYtensor}.
    Hence the tensor product $\otimes$ is biexact by part (2) of \Cref{thm:monoidalRingelduality_uppertolower}.
    Finally, if $\cA$ is rigid and the duality functors are order preserving dualities, then $\Kar(\cA) \simeq \Proj^\mathrm{fg}(\mathcal{B}^\mathrm{op})$ is rigid and the duality functors on $\Proj^\mathrm{fg}(\mathcal{B}^\mathrm{op})$ are order preserving dualities, so part (3) of \Cref{thm:monoidalRingelduality_uppertolower} implies that $\cC$ is rigid and $\Tilt(\cC)$ is closed under taking duals.
\end{proof}

\begin{Remark}
    The hypotheses of \Cref{thm:highest-weight-envelope} were verified by Sam--Snowden for the partition categories (also known as interpolation categories $\RepSt$ for symmetric groups), the Brauer categories (also known as interpolation categories $\uRep( \mathrm{O}_t(\C) )$ for orthogonal groups) and the oriented Brauer categories (also known as interpolation categories $\uRep( \GL_t(\C) )$ for general linear groups), see Sections 5, 6 and 7.1.2 in \cite{SamSnowdenTriangular}.
    Since these categories admit symmetric monoidal abelian envelopes by the results of \cite{ComesOstrikRepabSd,CoulembierMonAbEnv,EntovaAizenbudHinichSerganova-RepGLt}, Theorems~\ref{thm:abelian-envelope-symmetric} and \ref{thm:highest-weight-envelope} explain the existence of lower finite highest weight structures on these symmetric monoidal abelian envelopes.
    For further examples of categories that satisfy the hypoteses of \Cref{thm:highest-weight-envelope}, see \cite[Section 7]{SamSnowdenTriangular} and \Cref{sec:tensorenvelopes} below.
\end{Remark}

\section{Knop's tensor envelopes}
\label{sec:tensorenvelopes}

In this section, we prove that certain interpolation categories constructed by Knop \cite{KnopTensorEnvelopes} can be embedded as monoidal subcategories of tilting objects in lower finite highest weight categories with a monoidal structure (see \Cref{thm:knop-highest-weight-envelope}).
Throughout the section, we fix a category $\cR$ that is finitely complete\footnote{We note that the reference also considers a situation in which $\cR$ is not necessarily finitely complete.}, regular, exact, Mal'cev, and subobject-finite, in the sense of \cite[Section 2 and Appendix A]{KnopTensorEnvelopes}.
Two examples that are relevant for our discussion are the opposite category of the category of finite sets and the category of finite-dimensional vector spaces over a finite field $\F_q$ with $q$ elements.

For simplicity, we refer to monomorphisms in $\cR$ as \emph{injective} and to extremal epimorphisms in $\cR$ as \emph{surjective}.
A \emph{subobject} of an object $Y$ of $\cR$ is an equivalence class of injective homomorphisms $X \to Y$, where two injective homomorphisms $\imath \colon X \to Y$ and $\imath' \colon X' \to Y$ are equivalent if there is an isomorphism $\phi \colon X \to X'$ with $\imath' \circ \varphi = \imath$.
We sometimes write $X\subset Y$ to denote a subobject, suppressing the monomorphism.
Dually, \emph{quotients} are defined as equivalence classes of surjective homomorphisms in $\cR$.
Any homomorphism $f\:X\to Y$ in $\cR$ factors as $f=i \circ e$, where $e$ is surjective and $i$ is injective.
The factorization is unique up to an isomorphism, so the quotient of $X$ defined by $e$ and the subobject of $Y$ defined by $i$ are unique across all such factorizations, and we denote them by $\coim(f)$ and $\im(f)$, respectively.

Let $\cS(\cR)$ denote the class of all quotients in $\cR$. A \emph{degree function} on $\cR$ is a function $\delta\:\cS(\cR)\to\kk$ which is multiplicative with respect to the composition of surjective morphisms in $\cR$, constant along pullbacks in $\cR$, and sends the trivial quotients $\id_X \colon X \to X$ in $\cR$ to $1\in\kk$ (\cite[3.1.~Definition]{KnopTensorEnvelopes}).

Recall that a \emph{span} is a diagram of the form $\cdot\leftarrow\cdot\rightarrow \cdot$ in a given category, and the two homomorphisms are sometimes referred to as \emph{legs} of the span.
For all objects $X$ and $Y$ of $\cR$, let $\Rel_{X,Y}$ be the set of \emph{relations} between $X$ and $Y$, that is,
$$
\Rel_{X,Y} = \{ R: R\subset X\times Y \} .
$$
By the universal property of the product in $\cR$, a relation $R \subset X \times Y$ is uniquely determined by the span $X \leftarrow R \to Y$.
For any object $X$ of $\cR$, there is a distinguished \emph{diagonal subobject} $\Delta_X\subset X\times X$, defined using the universal property of the product.
Let $*$ denote the terminal object of $\cR$.
Then $\Delta_X$ can be viewed as an element of $\Rel_{X\times X,*}$, of $\Rel_{X,X}$, or of $\Rel_{*,X\times X}$.

The following is a special case of \cite[3.2.~Definition]{KnopTensorEnvelopes}.
\begin{Definition}
    Let $\cT^0(\cR,\delta)$ be the $\kk$-linear monoidal category with
    \begin{itemize}
        \item objects $[X]$, labeled by objects $X\in\cR$;
        \item homomorphisms given by $\Hom_{\cT^0(\cR,\delta)}([X],[Y])=\kk\Rel_{X,Y}$;
        \item identity homomorphisms given by $\Delta_X\in\Rel_{X,X}$;
        \item compositions constructed using fiber products, image factorizations, and the degree function as follows: for all $R_1\subset X\times Y$ and $R_2\subset Y\times Z$, set $f:=(R_1\times_Y R_2\to R_1\times R_2\to X\times Z)$, then 
        $$
        R_2 \circ R_1 = \delta(\coim(f)) \cdot \im(f) \in \kk\Rel_{X,Z} ;
        $$
        \item the tensor unit $\one=[*]$;
        \item tensor products of objects or morphisms given by products of objects in $\cR$ or of subobjects in $\cR$, respectively.
    \end{itemize}
\end{Definition}

\begin{Remark} \label{rem:tensorenveloperigidsymmetric}
    As explained in the paragraphs following Theorem 3.4 in \cite{KnopTensorEnvelopes}, $\cT^0(\cR,\delta)$ is a $\kk$-linear rigid symmetric monoidal category, and all objects of $\cT^0(\cR,\delta)$ are self-dual.
    Furthermore, $\cT^0(\cR,\delta)$ has finite-dimensional $\Hom$-spaces (because $\cR$ is subobject-finite).
    The endomorphism space of $\mathbf{1} = [*]$
    is one-dimensional if and only if the terminal object $*$ has no non-trivial subobjects in $\cR$ by \cite[Proposition 3.5]{KnopTensorEnvelopes}.
\end{Remark}

\begin{Definition}
\label{def:tensor-envelope}
The \emph{tensor envelope} $\cT(\cR,\delta)$ of $\cR$ is defined as the Karoubi envelope
\[ \cT(\cR,\delta) \coloneqq \Kar\big( \cT^0(\cR,\delta) \big) . \]
\end{Definition}

In particular, $\cT(\cR,\delta)$ is a $\kk$-linear additive idempotent complete rigid symmetric monoidal category with finite-dimensional $\Hom$-spaces.
If the terminal object in $\cR$ has no non-trivial subobjects, then the tensor unit in $\cT(\cR,\delta)$ has a one-dimensional endomorphism space by \cite[Proposition 3.5]{KnopTensorEnvelopes}, so that $\cT(\cR,\delta)$ is a pseudo-tensor category in the sense of Subsection~\ref{subsec:abelian-envelopes}.

\begin{Example} \label{ex:RepSt}
Let $\cR$ be the opposite category of the category of finite sets. Then the objects in $\cT^0(\cR,\delta)$ are finite sets, and the relations $\Rel_{X,Y}$, for finite sets $X,Y$, are the (set) partitions of the disjoint union $X\sqcup Y$. A morphism in this category can be represented by a partition diagram like this:
$$
\tp{1,3,4,0,2,12,0,11,0,13,15,0,14} ,
$$
where the upper points represent the set $X=\{1,2,3,4\}$, the lower points represent the set $Y=\{1',2',3',4',5'\}$, and the connected components of the diagram represent a partition of the set $X\sqcup Y$, in this case
$$
\{\{1,3,4\},\{2,2'\},\{1'\},\{3',5'\},\{4'\}\} .
$$
Any degree function $\delta$ in this case is determined by its value $t\in\kk$ for the subobject $\emptyset\hookrightarrow *$ in the category of finite sets, viewed as a quotient in the opposite category of finite sets. When composing two partition diagrams, the diagrams are stacked horizontally, the common middle points are removed, and any connected component that only involves middle points is evaluated to the scalar $t$.

The tensor envelope constructed in this case is Deligne's interpolation category for the symmetric groups $\RepSt$, see \cite{Deligne-RepSt} and the examples following Definition 9.10 in \cite{KnopTensorEnvelopes}.
\end{Example}

In \cite[Section 5]{KnopTensorEnvelopes} a \emph{core decomposition} of relations is described. We verify some of its properties. In particular, we show that it yields a triangular structure, in the sense of Sam--Snowden \cite{SamSnowdenTriangular}, on Knop's tensor envelopes.

\begin{Definition}
We define three collections of homomorphisms in $\cT^0(\cR,\delta)$:
$$
\Rel := \big\{ R\in\Rel_{X,Y}\subset\Hom_{\cT^0(\cR,\delta)}([X],[Y]) \mathop{\big|} X,Y\in\cR \big\},
$$  
$$
\cD_\Rel := \big\{ R\in\Rel_{X,Y}: X,Y\in\cR, R\to X\times Y\to X \text{ injective}, R\to X\times Y\to Y \text{ surjective} \big\} ,
$$
$$
\cU_\Rel := \big\{ R\in\Rel_{X,Y}: X,Y\in\cR, R\to X\times Y\to X \text{ surjective}, R\to X\times Y\to Y \text{ injective} \big\}  .
$$
\end{Definition}

\begin{Example}
Recall from \Cref{ex:RepSt} that in $\RepSt$, the homomorphisms in $\Rel$ can be identified with partition diagrams.
Under this identification, the homomorphisms in $\cD_\Rel$ correspond to partition diagrams all of whose connected components have at least one upper point and at most one lower point (note the the roles of injective and surjective morphisms are swapped in the opposite category of finite sets).
Dually, the homomorphism $\cU_\Rel$ corresponds to partition diagrams all of whose connected components have at least one lower point and at most one upper point.
For instance, the following partition diagram corresponds to a homomorphism in $\cD_\Rel$:
$$
\tp{1,2,13,0,3,5,0,6,0,4,14} .
$$
\end{Example}

\begin{Lemma} \label{lem:U-D-basics}
\begin{enumerate}[(a)]
    \item The sets of homomorphisms $\cD_\Rel$ and $\cU_\Rel$ are each closed under both the composition and the tensor product in $\cT^0(\cR,\delta)$.
    \item We have $g \circ f\in\Rel$ for all $f\in\cD_\Rel$ and $g\in\cU_\Rel$.
    \item (cf.~\cite[5.2.~Lemma]{KnopTensorEnvelopes}) Any $h\in\Rel$ has an essentially unique factorization $h=f \circ g$ with $f\in\cU_\Rel$ and $g\in\cD_\Rel$, in the sense that if $h=f' \circ g'$ is another factorization with $f'\in\cU_\Rel$ and $g'\in\cD_\Rel$, then there is an isomorphism $m$ contained both in $\cU_\Rel$ and $\cD_\Rel$ such that $f'=f \circ m^{-1}$ and $g=m \circ g'$.
\end{enumerate}
\end{Lemma}

\begin{proof}
(a) That $\cD_\Rel$ and $\cU_\Rel$ are closed under composition follows from the fact that injective and surjective homomorphisms are stable under arbitrary pullbacks in $\cR$, since $\cR$ is regular. 
That $\cD_\Rel$ and $\cU_\Rel$ are closed under tensor products follows from to the fact that both injective and surjective homomorphisms are closed under forming products in $\cR$, again since $\cR$ is regular.

(b) By the assumptions, the relations $f$ and $g$ can be represented by monomorphisms
\[ f \colon R \xrightarrow{~(f_X,f_Y)~} X\times Y , \hspace{2cm} g \colon S \xrightarrow{~(g_Y,g_Z)~} Y\times Z \]
such that $f_X$ and $g_Z$ are injective and $f_Y$ and $g_Y$ are surjective.
Recall that $g \circ f = \delta( \mathrm{coim}(h) ) \cdot h$, where $h$ is the homomorphism
\[ h \colon R\times_Y S\to R\times S\xrightarrow{f_X\times g_Z}X\times Z . \]
As $h$ is the composition of two injective homomorphisms in $\mathcal{R}$, we have $\delta(\mathrm{coim}(h)) = 1 \in \kk$ and therefore $g \circ f = h \in \mathrm{Rel}$, as claimed.

(c) This follows from Lemmas~\ref{lem:reduction-exists} and \ref{lem:reduction-unique}.
\end{proof}

\begin{Remark}
    While $\cU_\Rel$ and $\cD_\Rel$ are closed under composition by \Cref{lem:U-D-basics}(a), the composition of two homomorphisms in $\Rel$ may be a non-trivial scalar multiple of a homomorphism in $\mathrm{Rel}$.
\end{Remark}

We define a relation $\leq$ on the isomorphism classes $|\cT^0(\cR,\delta)|$ of the category $\cT^0(\cR,\delta)$ by setting $|[X]| \le |[Y]|$ if and only if $X$ is a subquotient of $Y$ in $\cR$, that is, if there is a diagram $Y\hookleftarrow\cdot\twoheadrightarrow X$ in $\cR$.
To alleviate notation, we set $|X|:=|[X]|$ for all $X\in\cR$.

\begin{Lemma} \label{lem:partial-order-knop} 
(a) In any span $X\hookleftarrow\cdot\twoheadrightarrow X$ in $\cR$ consisting of an injective and a surjective morphism as indicated, both legs have to be isomorphisms.

(b) The relation $\le$ above is a partial order.
\end{Lemma}
    
\begin{proof}
(a) This is shown in \cite[2.6.~Lemma]{KnopTensorEnvelopes}, which uses that $\cR$ is subobject-finite.

(b) Clearly, $|X|\le|X|$ for all $X$. Transitivity is shown in \cite[2.5.~Lemma]{KnopTensorEnvelopes}, this can be seen by forming a suitable pullback. Assume $|X|\le |Y|$ and $|Y|\le |X|$, for $X,Y\in\cR$. Then $|X|\le|Y|\le|X|$, i.e., there are morphisms $f$, $g$, $f'$, $g'$ in $\cR$ as in the following diagram:
$$
\begin{tikzcd}[column sep=1cm, row sep=0.3cm,remember picture]
 && Z 
    \ar[ld,dashed,hookrightarrow,"f''",swap]
    \ar[rd,dashed,two heads,"g''"] \\
 &  \cdot 
    \ar[ld,hookrightarrow,"f",swap] 
    \ar[rd,two heads,"g",swap] 
 && \cdot 
    \ar[ld,hookrightarrow,"f'"] 
    \ar[rd,two heads,"g'"] \\
 X && Y && X
\end{tikzcd} .
$$
We can form a pullback, indicated by the two dashed arrows, producing a subquotient diagram $X\hookleftarrow Z\twoheadrightarrow X$. Both legs in this span have to be isomorphism by (a). Then $f$ and $g'$ have to be isomorphisms: For instance, $f$ has a right inverse $r$, so $frf=f$, so $rf=\id$, as $f$ is injective, so $r$ is a two-sided inverse of $f$, and dually $g''$ is an isomorphism, which implies that $g'$ is also an isomorphism. The same argument applied to the chain of relations $|Y|\le|X|\le|Y|$ yields that the morphisms called $f'$ and $g$ in the above diagram have to be isomorphisms. Hence, $|X|=|Y|$.
\end{proof}

We now consider the $\kk$-linear wide subcategories $\kk \cU_\mathrm{Rel}$ and $\kk \cD_\mathrm{Rel}$ of $\cT^0(\cR,\delta)$ whose $\Hom$-spaces are spanned by homomorphisms in $\cU_\mathrm{Rel}$ and $\cD_\mathrm{Rel}$, respectively.
Note that $\kk \cU_\mathrm{Rel}$ and $\kk \cD_\mathrm{Rel}$ are indeed subcategories of $\cT^0(\cR,\delta)$, since the collections of homomorphisms $\cU_\mathrm{Rel}$ and $\cD_\mathrm{Rel}$ are closed under composition by \Cref{lem:U-D-basics}.

\begin{Proposition} \label{prop:knop-monoidal-triangular}
    Suppose $\kk$ is of characteristic zero.
    Then $(\kk\cU_\Rel,\kk\cD_\Rel)$ is a monoidal triangular structure in the sense of \cite{SamSnowdenTriangular} on $\cT^0(\cR,\delta)$, with respect to the above poset structure.
\end{Proposition}
\begin{proof}
    Recall that $\cT^0(\cR,\delta)$ is an essentially small $\kk$-linear monoidal category with finite-dimensional $\Hom$-spaces by \Cref{rem:tensorenveloperigidsymmetric}.
    Set $\cU:=\kk\cU_\Rel$ and $\cD:=\kk\cD_\Rel$.
    We will use the criterion from \Cref{prop:criterion-triangular} to show that $(\cU,\cD)$ defines a (monoidal) triangular structure on $\cT^0(\cR,\delta)$ and start by verifying that $\cU$ and $\cD$ satisfy the conditions (T1) and (T2) from \Cref{def:triangularstructure}.
    
    For any object $X$ of $\cR$, the endomorphism spaces $\cU([X],[X])$ and $\cD([X],[X])$ are given by linear combinations of spans that exhibit $X$ as a subquotient of itself.
    The legs of these spans have to be isomorphisms by \Cref{lem:partial-order-knop}(a), and so we have
    \[ \cU([X],[X]) = \cD([X],[X]) \cong \kk\Aut_\cR(X) , \]
    where $\Aut_\cR(X)$ is the group of automorphisms of $X$ in $\cR$.
    This group is finite, as any automorphism is determined by its graph (considered as a subobject of $X\times X$) and as $\cR$ is subobject-finite by assumption.
    Since $\kk$ is of characteristic $0$, the group algebra $\kk \Aut_\cR(X)$ is semisimple, verifying (T1).

    Next observe that the partial order $\leq$ from \Cref{lem:partial-order-knop} is lower finite, because every object of $\cR$ has finitely many subobjects and finitely many quotients (since every quotient $Y$ of an object $X$ is determined by the subobject $X\times_Y X$ of $X\times X$).
    For objects $X$ and $Y$ of $\cR$, we have $\cU([X],[Y]) = 0$ unless $|X| \leq |Y|$ because every span $X \twoheadleftarrow \cdot \hookrightarrow Y$ in $\cU_\mathrm{Rel}$ exhibits $X$ as a subquotient of $Y$.
    Similarly, we have $\cD([X],[Y]) = 0$ unless $|X| \geq |Y|$, and this verifies (T2).

    Now the subsets $\cU' \coloneqq \cU_\mathrm{Rel} \subseteq \cU$ and $\cD' \coloneqq \cD_\mathrm{Rel} \subseteq \cD$ satisfy the hypotheses of \Cref{prop:criterion-triangular} by \Cref{lem:U-D-basics}, hence $(\cU,\cD)$ defines a triangular structure on $\cT^0(\cR,\delta)$.
    As $\cU$ and $\cD$ are closed under tensor products (again by \Cref{lem:U-D-basics}), the triangular structure $(\cU,\cD)$ is monoidal.
\end{proof}

For the interpolation category $\RepSt$, a monoidal triangular structure was already defined by Sam--Snowden \cite[Section 6.6]{SamSnowdenTriangular} using essentially the same strategy as the proof of \Cref{prop:knop-monoidal-triangular}.

\begin{Remark}
\label{rem:knop-triangular-U-Dop}
    There is a $\kk$-linear monoidal functor $\cT^0(\cR,\delta) \to \cT^0(\cR,\delta)^\op$ that is the identity on objects and sends $R \in\Rel_{X,Y}$ to the same subobject, but viewed as an element in $\Rel_{Y,X}$.
    This functor induces a $\kk$-linear monoidal equivalence $\kk\cU_\Rel\simeq \kk\cD_\Rel^\op$.
\end{Remark}

As $\cT^0(\cR,\delta)$ admits a triangular structure $(\kk\cU_\Rel,\kk\cD_\Rel)$ by \Cref{prop:knop-monoidal-triangular}, the category
\[ \mathcal{D} = \PSh^\mathrm{fd}\big( \cT^0(\cR,\delta) \big) \]
is an upper finite highest weight category by \Cref{prop:triangularstructurehighestweightcategory}.
Furthermore, the category
\[ \mathcal{D}^\mathrm{cc} \simeq \PSh\big( \cT^0(\cR,\delta) \big) \]
can be equipped with a monoidal structure via Day convolution.
In order to also define a monoidal structure on the Ringel dual of $\mathcal{D}$ (as in \Cref{thm:highest-weight-envelope}), we next want to show that the Day convolution tensor product on $\PSh( \kk \cD_\mathrm{Rel}^\mathrm{op} )$ is exact in both arguments, using the criterion from \Cref{prop:tensor-exact}.
To that end, we will consider the following collection of relations in $\cR$.

\begin{Definition}
    For all $X\in\cR$, define $\Phi_X$ as the collection of relations $R\in\Rel_{X, Y\times Z}$
    such that the induced morphisms 
    $$ R\to X ,\quad R\to Y, \quad R\to Z$$
    are all surjective, and the induced maps
    \[ R\to X\times Y ,\quad R\to X\times Z, \quad R\to Y\times Z \]
    are all injective.
\end{Definition}

Note that, in particular, all relations in $\Phi_X$, viewed as morphism with source $[X]$, lie in $\cU_\Rel$.

\begin{Lemma} \label{lem:tri-decomposition} Consider $X,Y,Z\in\cR$ and $f\in\cU_\Rel([X],[Y]\o[Z])$.

(a) There are $\alpha,\beta\in\cU_\Rel$ and $\phi\in\Phi_X$ such that $f=(\alpha\o\beta)\phi$ in $\cT^0(\cR)$.

(b) If $f=(\alpha'\o\beta')\phi'$ is a second factorization as in (a), then there are isomorphisms $\sigma_\alpha,\sigma_\beta\in\cU_\Rel$ such that
$$
\alpha'= \alpha \sigma_\alpha^{-1} ,\quad
\beta' = \beta' \sigma_\beta^{-1} ,\quad
\phi'=(\sigma_\alpha\o\sigma_\beta)\phi = \phi .
$$
\end{Lemma}

\begin{proof} This is a reformulation of \Cref{cor:reduced-3-relation}, where the elements $f$ are viewed as $1$-reduced $3$-ary relations and the elements in $\Phi_X$ correspond to $i$-reduced $3$-ary relations for $i=1,2,3$.
\end{proof}

\begin{Example} In $\RepSt$, such factorizations look as follows, where the lower points left and right of the separator correspond to the tensor factors $[Y]$ and $[Z]$, respectively:
$$
\tp[{\node[below=-4pt] at \makePartPt{15} {$\mid$};}]{1,13,17,0, 2,11,12,0, 14,16,0,18}
= 
\tp[{\node[below=-4pt] at \makePartPt{25} {$\mid$};}]{1,13,17,27,0,13,23,0, 2,11,21,22,0, 24,14,16,26,0, 28}
= 
\tp[{\node[below=-4pt] at \makePartPt{25} {$\mid$};}]{1,11,17,27,0,11,23,0, 2,12,21,22,0, 24,14,16,26,0, 28}
$$    
Note that the two factorizations in the second and third expression are related to each other by isomorphisms, as required.
\end{Example}

\begin{Corollary} \label{cor:knop-tensor-exact}
The Day convolution tensor product on $\PSh(\kk \cD_\Rel^\op)$ is exact in both arguments.
\end{Corollary}

\begin{proof}
    By Lemmas~\ref{lem:U-D-basics} and \ref{lem:tri-decomposition}, the subset $\cU' \coloneqq \cU_\Rel \subseteq \kk\cU_\Rel$ satisfies the hypotheses of \Cref{prop:tensor-exact}, and it follows that the Day convolution tensor product on $\PSh( \kk \cU_\Rel )$ is exact in both arguments.
    Using the $\kk$-linear monoidal equivalence $\kk \cD_\Rel^\mathrm{op} \simeq \kk \cU_\Rel$ from \Cref{rem:knop-triangular-U-Dop}, we conclude that the Day convolution tensor product on $\PSh(\kk \cD_\Rel^\mathrm{op})$ is also exact in both arguments.
\end{proof}

For the interpolation category $\RepSt$, \Cref{cor:knop-tensor-exact} was shown in \cite[Proposition~6.5]{SamSnowdenTriangular} using essentially the same strategy.

Now we are ready to prove the main result of this section.

\begin{Theorem}
\label{thm:knop-highest-weight-envelope}
    Suppose that $\kk$ is of characteristic zero and let $\cR$ be a subobject-finite regular Mal'cev category with a degree function $\delta$.
	Then there is a lower finite highest weight category $\cC = \cC(\mathcal{R},\delta)$ with a rigid symmetric monoidal structure such that $\Tilt(\cC)$ is a monoidal subcategory closed under taking duals and there is a $\kk$-linear symmetric monoidal equivalence
    \[ \cT(\mathcal{R},\delta) \simeq \Tilt(\cC) . \]
\end{Theorem}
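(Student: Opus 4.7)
The plan is to apply \Cref{thm:highest-weight-envelope} to the monoidal triangular category $\cA = \cT^0(\cR,\delta)$, so that
\[ \Kar(\cA) = \Kar\big( \cT^0(\cR,\delta) \big) = \cT(\cR,\delta) \]
by \Cref{def:tensor-envelope}. With this in mind, the whole argument consists of unpacking the hypotheses of \Cref{thm:highest-weight-envelope} and checking that they are all provided by the preliminary results established earlier in this section.

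First, I would observe that $\cT^0(\cR,\delta)$ is an essentially small $\kk$-linear rigid symmetric monoidal category with finite-dimensional $\Hom$-spaces by \Cref{rem:tensorenveloperigidsymmetric} (using that $\cR$ is subobject-finite). By \Cref{prop:knop-monoidal-triangular} (where the assumption $\chark(\kk)=0$ is used to ensure semisimplicity of the endomorphism algebras $\kk\Aut_\cR(X)$), the pair $(\kk\cU_\Rel,\kk\cD_\Rel)$ is a monoidal triangular structure on $\cT^0(\cR,\delta)$ with respect to the lower finite poset $(\lvert\cT^0(\cR,\delta)\rvert,\leq)$ from \Cref{lem:partial-order-knop}. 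Moreover, by \Cref{cor:knop-tensor-exact}, the Day convolution tensor product on $\PSh(\kk\cD_\Rel^\op)$ is exact in both arguments. This provides the input needed to apply parts (1) and (2) of \Cref{thm:highest-weight-envelope}, yielding a lower finite highest weight category $\cC = \cC(\cR,\delta)$ with a symmetric monoidal structure $(\otimes,\mathbf{1},\ldots)$, right exact in both arguments, such that $\Tilt(\cC)$ is a symmetric monoidal subcategory and there is a $\kk$-linear symmetric monoidal equivalence $\Tilt(\cC) \simeq \Kar\big(\cT^0(\cR,\delta)\big) = \cT(\cR,\delta)$.

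To obtain rigidity and the fact that $\Tilt(\cC)$ is closed under taking duals, I would invoke part (4) of \Cref{thm:highest-weight-envelope}, which requires $\cT^0(\cR,\delta)$ to be rigid (already noted) and the two duality functors $(-)^*$ and $\prescript{*}{}(-)$ to be order preserving. Since all objects of $\cT^0(\cR,\delta)$ are self-dual by \Cref{rem:tensorenveloperigidsymmetric}, both duality functors induce the identity map on the set of isomorphism classes $\lvert\cT^0(\cR,\delta)\rvert$, so they are trivially order-preserving dualities. In particular, the hypothesis of part (3) of \Cref{thm:highest-weight-envelope} is also satisfied, so $\otimes$ is in fact exact in both arguments, and $\cC$ inherits a rigid symmetric monoidal structure with $\Tilt(\cC)$ closed under duals. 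Since essentially every ingredient has been verified in the preceding results, no real obstacle remains; the only thing to be careful about is confirming that the symmetric braiding and the self-duality of all objects in $\cT^0(\cR,\delta)$ correctly feed into the braided and rigid parts of \Cref{thm:highest-weight-envelope}, which is immediate from the explicit description in \Cref{rem:tensorenveloperigidsymmetric}.
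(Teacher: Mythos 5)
Your proposal is correct and follows the same route as the paper's own proof: both apply \Cref{thm:highest-weight-envelope} to $\cA = \cT^0(\cR,\delta)$, feeding in the monoidal triangular structure from \Cref{prop:knop-monoidal-triangular}, the exactness of Day convolution on $\PSh(\kk\cD_\Rel^\op)$ from \Cref{cor:knop-tensor-exact}, and the rigid symmetric structure with self-dual objects from \Cref{rem:tensorenveloperigidsymmetric} to verify the order-preserving duality hypothesis. Your added remark that self-duality forces both duality functors to act as the identity on isomorphism classes, hence be order preserving, is a correct unpacking of what the paper leaves implicit.
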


\begin{proof}
    Recall from \Cref{prop:knop-monoidal-triangular} that there is a monoidal triangular structure $( \kk\mathcal{U}_\mathrm{Rel} , \kk\cD_\mathrm{Rel} )$ on $\cT^0(\cR,\delta)$ and from \Cref{cor:knop-tensor-exact} that the Day convolution tensor product on $\PSh( \kk\mathcal{D}_\mathrm{Rel}^\mathrm{op} )$ is exact in both arguments.
    Further note that $\cT^0(\cR,\delta)$ is a rigid symmetric monoidal category and that the duality functor $(-)^* \colon \cT^0(\cR,\delta) \to \cT^0(\cR,\delta)^\mathrm{op}$ is an order preserving duality (in the sense of \Cref{def:orderpreservingduality_triangular}) by \Cref{rem:tensorenveloperigidsymmetric}.
    Now \Cref{thm:highest-weight-envelope} implies that there is a lower finite highest weight category $\cC$ with a rigid symmetric monoidal structure such that $\Tilt(\cC)$ is a monoidal subcategory closed under taking duals and
    \[ \Tilt(\cC) \simeq \Kar\big( \cT^0(\cR,\delta) \big) = \cT(\cR,\delta) \]
    as $\kk$-linear symmetric monoidal categories.
\end{proof}

If $\mathcal{R}$ is a regular category whose terminal object has no non-trivial subobjects, then the endomorphism space of the tensor unit in $\cT(\cR,\delta)$ is one-dimensional by \cite[Proposition 3.5]{KnopTensorEnvelopes}.
This implies that $\cT(\cR,\delta)$ is a pseudo-tensor category and that the category $\cC(\cR,\delta)$ from \Cref{thm:knop-highest-weight-envelope} is a tensor category, in the sense of Subsection~\ref{subsec:abelian-envelopes}.
In this situation, it is natural to ask whether $\cC(\cR,\delta)$ is a monoidal abelian envelope of $\cT(\cR,\delta)$.

\begin{Corollary}
    \label{cor:knop-mon-ab-env}
    Suppose that $\kk$ is of characteristic zero and let $\cR$ be a subobject-finite regular Mal'cev category with a degree function $\delta$.
	Further suppose that the terminal object of $\mathcal{R}$ has no non-trivial subobjects.
    If $\cT(\cR,\delta)$ admits a weak (symmetric) monoidal abelian envelope, then $\cC(\cR,\delta)$ is a (symmetric) monoidal abelian envelope of $\cT(\cR,\delta)$.
\end{Corollary}
\begin{proof}
    This follows from Theorems~\ref{thm:abelian-envelope}, \ref{thm:abelian-envelope-symmetric} and \ref{thm:knop-highest-weight-envelope}.
\end{proof}

\begin{Corollary}
    For the interpolation categories $\RepSt$ and $\uRep(\GL_t(\mathbb F_q))$, the lower finite highest weight category $\cC$ from \Cref{thm:knop-highest-weight-envelope} is a (symmetric) monoidal abelian envelope.
\end{Corollary}

\begin{proof}
    Both $\RepSt$ and $\uRep(\GL_t(\mathbb F_q))$ are instances of Knop's tensor envelopes $\cT(\cR,\delta)$, where $\mathcal{R}$ is the opposite category of finite sets or the category of finite-dimensional $\F_q$-vector spaces, respectively.
    In both cases, the terminal object of $\mathcal{R}$ (the empty set or the trivial vector space) have no non-trivial subobjects.
    Therefore, it suffices (by \Cref{cor:knop-mon-ab-env}) to observe that $\RepSt$ and $\uRep(\GL_t(\mathbb F_q))$ have (symmetric) monoidal abelian envelopes.
    For $\RepSt$, this follows from the results of Comes--Ostrik \cite{ComesOstrikRepabSd} (in the symmetric case) and from \Cref{rem:RepSt-trivial-blocks}.
    For $\uRep(\GL_t(\mathbb F_q))$, this follows from \cite[Theorem~15.15]{HS-oligo} in the symmetric case, but the same proof, combined with \cite[Theorem 2.2.1]{CEOPAbEnvQuotProp}, implies that the symmetric monoidal abelian envelope of $\uRep(\GL_t(\mathbb F_q))$ constructed in \cite{HS-oligo} is also a monoidal abelian envelope.
\end{proof}

More generally, the existence of monoidal abelian envelopes for Knop's tensor envelopes can be obtained via a comparison with certain categories constructed from pro-oligomorphic groups by Harman--Snowden in \cite{HS-oligo}. 

Assume for the remainder of this section that $\kk$ is of characteristic $0$ and that the terminal object of $\cR$ has no non-trivial subobjects.
As is explained in \cite{Snow-regular},
there is a pro-oligomorphic group $G$ with a so-called \emph{measure} $\mu$ such that the Karoubi envelope of a category $\uPerm(G,\mu)$ of ``permutation modules'' constructed in \cite[Definition 8.1]{HS-oligo} contains $\cT(\cR,\delta)$ as a full $\kk$-linear monoidal subcategory.

\begin{Remark}
\label{rem:Knop-oligomorphic-Snowden}
    More specifically, \cite[Theorem~7.5]{Snow-regular} shows that there is a pro-oligomorphic group $G$ associated to $\cR$, a full subcategory $S(\cR)$ of a certain category $S(G)$ of $G$-sets \cite[Section 2.3]{HS-oligo}, and a measure $\mu$ on $S(\cR)$ in the sense of \cite[Definition 6.4]{Snow-regular} associated to $(\cR,\delta)$, such that
    \[ \cT(\cR,\delta) \simeq \Kar\big( \uPerm(S(\cR),\mu) \big) \]
    as $\kk$-linear symmetric monoidal categories, where $\uPerm(S(\cR),\mu) \big)$ is as in \cite[Construction 7.2]{Snow-regular}.
    Now the category $\uPerm(S(\cR),\mu)$ embeds as a full $\kk$-linear monoidal subcategory in a category of ``permutation modules'' for $G$, in the sense of \cite{HS-oligo}, by the following argument, which was explained to us by Andrew Snowden:
    By \cite[Proposition~4.4]{Snow-regular}, every object of $S(G)$ has a finite cover by an object of $S(\cR)$.
    Using \cite[Section~3.4(d)]{HS-oligo} (proved in \cite[Section 2.6]{NS}), it follows that the measure $\mu$ on $S(\cR)$ can be extended to a measure $\mu$ on $S(G)$, that is, to a measure on $G$.
    Then $\uPerm(S(\cR),\mu)$ is a full $\kk$-linear monoidal subcategory of $\uPerm(G,\mu)$ by \cite[Construction~7.2]{Snow-regular} and  \cite[Definition~8.1, Section~8.3]{HS-oligo}, and therefore $\cT(\cR,\delta) \simeq \Kar\big( \uPerm(S(\cR),\mu) \big)$ is a full $\kk$-linear monoidal subcategory of $\Kar\big( \uPerm(G,\mu) \big)$.

\end{Remark}

It is shown in \cite{HS-oligo} that if a measure $\mu$ on a pro-oligomorphic group $G$ is \emph{quasi-regular} and has the so-called \emph{property (P)}, then the category $\Kar(\uPerm(G,\mu))$ has a (symmetric) monoidal abelian envelope $\cC' = \uRep(G,\mu)$.
More precisely, it is shown in \cite[Theorem 13.13]{HS-oligo} that $\uPerm(G,\mu)$ admits a symmetric monoidal abelian envelope $\cC' = \uRep(G,\mu)$ if $\mu$ is regular and satisfies (P), but the same proof, combined with \cite[Theorem 2.2.1]{CEOPAbEnvQuotProp}, shows that $\cC'$ is also a monoidal abelian envelope.

\begin{Proposition} \label{prop:comparison-Knop-HS}
    Assume $\kk$ is of characteristic $0$ and the measure $\mu$ associated to $(\cR,\delta)$ is quasi-regular and has property (P).
    Then the lower finite highest weight category $\cC(\cR,\delta)$ from \Cref{thm:knop-highest-weight-envelope} is a (symmetric) monoidal abelian envelope of $\cT(\cR,\delta)$, and agrees with the (symmetric) monoidal abelian envelope $\cC' = \uRep(G,\mu)$ of $\Kar(\uPerm(G,\mu))$ as constructed in \cite{HS-oligo}.
\end{Proposition}

\begin{proof}
    By the proof of Theorem~13.13 in \cite{HS-oligo}), there is a fully faithful $\kk$-linear symmetric monoidal functor
    \[ \Phi \colon \uPerm(G,\mu) \longrightarrow \cC' . \]
    Following \cite[Definition~8.1]{HS-oligo} and \cite[Construction~7.2]{Snow-regular}, the objects of the categories $\uPerm(G,\mu)$ and $\uPerm(S(\cR),\mu)$ are formal symbols $\mathrm{Vec}_X$, for objects $X$ of $S(G)$ or $S(\cR)$, respectively, and as in \cite[Proposition~10.13]{HS-oligo}, we write $C(X) = \Phi( \mathrm{Vec}_X )$. 

    Now \cite[Proposition~4.4]{Snow-regular} implies that for every object $X$ of $S(G)$, there is an object $Y$ of $S(\cR)$ with a surjective homomorphism $Y \to X$ in $S(G)$.
    As the measure $\mu$ is quasi-regular, it is \emph{normal} in the sense of \cite[Definition~11.1]{HS-oligo} by \cite[Proposition~11.3]{HS-oligo}, which implies that $C(X)$ is a quotient of $C(Y)$ in $\cC'$.
    As every object of $\cC'$ is a quotient of an object of the form $C(X)$ for an object $X$ of $S(G)$ by \cite[Proposition~11.11]{HS-oligo}, we conclude that every object of $\cC'$ is a quotient of an object of $\cT(\cR,\delta) \simeq \mathrm{Kar}\big( \uPerm( S(\cR) , \mu ) \big)$ (see \Cref{rem:Knop-oligomorphic-Snowden}).
    This implies that $\cC'$ is a (symmetric) monoidal abelian envelope of $\cT(\cR,\delta)$ by Theorem~2.2.1 and Lemma 2.5.1 in \cite{CEOPAbEnvQuotProp}, and so we have $\cC(\cR,\delta) \simeq \cC'$ as $\kk$-linear (symmetric) monoidal categories by Theorems~\ref{thm:abelian-envelope} and \ref{thm:abelian-envelope-symmetric}.
\end{proof}

\section{Tensor structures for affine Lie algebras at positive levels}
\label{sec:affineLiealgebras}

In the celebrated series of papers \cite{KL12,KL34}, Kazhdan and Lusztig have defined monoidal structures on certain categories of representations of affine Lie algebras at negative or non-rational levels.
In this section, we explain how lower to upper monoidal Ringel duality (\Cref{thm:monoidalRingelduality_lowertoupper}) can be used to define monoidal structures on categories of representations of affine Lie algebras at positive rational levels, generalizing recent results obtained by McRae--Yang \cite{McRaeYang} for the affine Lie algebra $\mathfrak{\widehat{sl}}_2(\C)$.
Using monoidal Ringel duality, the existence of monoidal structures at positive levels is an easy consequence of the fact (well known to experts) that the Arkhipov--Soergel duality \cite{ArkhipovSemiInfiniteAssociative,SoergelCharakterformeln} between negative-level and positive-level representations of affine Lie algebras can be seen as an instance of Ringel duality; cf.\ \cite[Section 6.3]{BrundanStroppelSemiInfiniteHighestWeight}.

Let $\mathfrak{g}$ be a complex simple Lie algebra with
root system $\Phi$ and weight lattice $X$.
We fix a set $\Pi \subseteq \Phi$ of simple roots and write $X^+ \subseteq X$ for the corresponding set of dominant weights.
For $\lambda,\mu \in X$, we write $\lambda \leq \mu$ if $\mu - \lambda$ is a non-negative integer linear combination of simple roots.

Consider the loop algebra $\mathcal{L}\mathfrak{g} = \mathfrak{g} \otimes \C[t,t^{-1}]$ and the affine Lie algebra $\aff{g} = \mathcal{L} \mathfrak{g} \oplus \C c$, which is a central extension of $\mathcal{L}\mathfrak{g}$ with commutator defined by $[c,\aff{g}] = 0$ and
\[ [ x \otimes t^m , y \otimes t^n ] = [x,y] \otimes t^{m+n} + \delta_{m,-n} \cdot m \cdot \langle x , y \rangle \cdot c \]
for $x,y \in \mathfrak{g}$ and $m,n \in \Z$, where $\langle - \,, - \rangle$ denotes the Killing form on $\mathfrak{g}$.
We further define the Lie subalgebra $\aff{p} = \mathfrak{g} \otimes \C[t] \oplus \C c$.
A $\aff{g}$-module $M$ is said to be of \emph{level} $\kappa \in \C$ if the central element $c$ acts on $M$ via multiplication with the scalar $\kappa - h^\vee$, where $h^\vee$ is the dual Coxeter number of $\mathfrak{g}$.
Following \cite[Section 7]{SoergelCharakterformeln}, we consider the category $\mathcal{O}_\kappa$ of $\aff{g}$-modules of level $\kappa$ on which $\aff{p}$ acts locally finitely and $\mathfrak{g} \otimes t \C[t]$ acts locally nilpotently.
Via the unique homomorphism $\aff{p} \to \mathfrak{g}$ that restricts to the identity on $\mathfrak{g}$ and sends $\mathfrak{g} \otimes t \C[t]$ to zero and $c$ to $\kappa-h^\vee$, we can define for every dominant weight $\lambda \in X^+$ the Verma module $\Delta_\kappa(\lambda) = U(\aff{g}) \otimes_{U(\aff{p})} L_\mathfrak{g}(\lambda)$ in $\mathcal{O}_\kappa$, where $L_\mathfrak{g}(\lambda)$ is the simple $\mathfrak g$-module of highest weight $\lambda$.
We write $L_\kappa(\lambda)$ for the unique simple quotient of $\Delta_\kappa(\lambda)$.
Then the $\aff{g}$-modules $L_\kappa(\lambda)$ with $\lambda \in X^+$ form a set of representatives for the isomorphism classes of simple $\aff{g}$-modules on $\mathcal{O}_\kappa$.

\begin{Remark} \label{rem:affinevsKacMoody}
    The affine Lie algebra $\aff{g}$ has a derivation $\partial$ with $\partial( x \otimes t^m ) = m \cdot x \otimes t^m$ for $x \in \mathfrak{g}$ and $m \in \Z$ and $\partial(c) = 0$.
    By adjoining this derivation to $\aff{g}$, one obtains a Kac--Moody algebra $\kacmoody{g} = \aff{g} \oplus \C \partial$, see \cite[Theorem 13.1.3]{KumarKacMoody}.
    A significant part of the literature on representations of affine Lie algebras is concerned with representations of $\kacmoody{g}$ instead of $\aff{g}$, but it is usually straightforward to translate between the two settings.
    Specifically, let $\kacmoody{p} = \aff{p} \oplus \C \partial$ and consider the category $\mathbb{O}_\kappa$ of $\kacmoody{g}$-modules of level $\kappa$ that are locally finite with respect to $\kacmoody{p}$ and semisimple with respect to $\mathfrak{g} \oplus \C c \oplus \C \partial$.
    The Casimir operator $\Omega$ from equation (12.8.3) in \cite{KacInfDimLiealgebras} acts as a locally finite endomorphism on every $\aff{g}$-module in $\mathbb{O}_\kappa$, and we write $\mathbb{O}_\kappa(\Omega \simeq 0)$ for the full subcategory of $\mathbb{O}_\kappa$ whose objects are the $\aff{g}$-modules in $\mathbb{O}_\kappa$ on which $\Omega$ acts locally nilpotently.
    Then, according to Proposition 8.1 in \cite{SoergelCharakterformeln} (attributed to Polo), forgetting the action of $\partial$ gives rise to an equivalence
    \begin{equation} \label{eq:categoryOequivalencecasimir}
        \mathbb{O}_\kappa(\Omega \simeq 0) \simeq \mathcal{O}_\kappa
    \end{equation}
    for all $\kappa \neq 0$; see also \cite[Section 3]{ParshallScottSemisimpleSeries} for more details.
    In particular, $\mathcal{O}_\kappa$ is equivalent to a union of blocks of $\mathbb{O}_\kappa$, cf.\ Proposition 5.5 in \cite{ParshallScottSemisimpleSeries}.
    In view of the equivalence \eqref{eq:categoryOequivalencecasimir}, we will occasionally cite results about representations of $\kacmoody{g}$ (e.g.\ from \cite{SoergelCharakterformeln} or \cite[Section 6.3]{BrundanStroppelSemiInfiniteHighestWeight}) to justify statements about representations of $\aff{g}$ without further explanation. 
\end{Remark}

From now on, we fix a non-zero rational level $0 \neq \kappa \in \Q$.
(The category $\mathcal{O}_\kappa$ is semisimple at non-rational levels and poorly understood at the ``critical'' level zero.)
We consider the full subcategories of $\mathcal{O}_\kappa$ given by the following finiteness conditions.
\begin{flalign*}
    \mathcal{O}_\kappa^\mathrm{fl} \; & = \; \text{finite length } \aff{g}\text{-modules in } \mathcal{O}_\kappa \\
    \mathcal{O}_\kappa^\mathrm{fg} \; & = \; \text{finitely generated } \aff{g}\text{-modules in } \mathcal{O}_\kappa \\
    \mathcal{O}_\kappa^\mathrm{fm} \; & = \; \aff{g}\text{-modules in } \mathcal{O}_\kappa \text{ with finite composition multiplicities}
\end{flalign*}
Note that the category $\mathcal{O}_\kappa^\mathrm{fl}$ is denoted by $\mathscr{O}_\kappa$ in \cite[Definition 2.15]{KL12}.
We have the following result, analogous to \cite[Theorem 6.4]{BrundanStroppelSemiInfiniteHighestWeight}.

\begin{Proposition}
    Let $0 \neq \kappa \in \Q$.
    \begin{enumerate}
        \item If $\kappa < 0$, then $\mathcal{O}_\kappa^\mathrm{fl}$ is a lower finite highest weight category with weight poset $(X^+,\leq)$.
        \item If $\kappa > 0$, then $\mathcal{O}_\kappa^\mathrm{fm}$ is an upper finite highest weight category with weight poset $(X^+,\leq^\mathrm{op})$.
    \end{enumerate}
    In both cases, the standard objects are the Verma modules $\Delta_\kappa(\lambda)$ for $\lambda \in X^+$.
\end{Proposition}
\begin{proof}
    First suppose that $\kappa > 0$.
    Then for $\lambda,\mu \in X^+$, we have $[ \Delta_\kappa(\lambda) : L_\kappa(\mu) ] = 0$ unless $\lambda \leq \mu$ by the character formula in \cite[Theorem 1.4]{KashiwaraTanisakiKazhdanLusztigPositiveRational}.
    As the poset $(X^+,\leq^\mathrm{op})$ is upper finite, Theorem 3.2 and Remark 4.3 in \cite{SoergelCharakterformeln} imply that $\mathcal{O}_\kappa^\mathrm{fm}$ satisfies the condition $(P\Delta)$.
    Now it only remains to show that $\mathcal{O}_\kappa^\mathrm{fm}$ is Schurian, and this can be done exactly as in \cite[Proof of Theorem 6.4]{BrundanStroppelSemiInfiniteHighestWeight}.

    Now suppose that $\kappa < 0$.
    Then the category $\mathcal{O}_\kappa^\mathrm{fl}$ is locally finite by \cite[Proposition 2.29]{KL12}, and we have $[ \Delta_\kappa(\lambda) : L_\kappa(\mu) ] = 0$ unless $\mu \leq \lambda$ for $\lambda,\mu \in X^+$ by \cite[Theorem 6.4]{TanisakiCharacterFormulas}.
    Again by Theorem 3.2 and Remark 4.3 in \cite{SoergelCharakterformeln}, every finite truncation of $\mathcal{O}_\kappa^\mathrm{fl}$ satisfies the condition $(P\Delta)$, and so $\mathcal{O}_\kappa^\mathrm{fl}$ is a lower finite highest weight category.
\end{proof}

\begin{Remark}
    If $\kappa > 0$, then for the upper finite highest weight category $\cC = \mathcal{O}_\kappa^\mathrm{fm}$, we have
    \[ \cC^\mathrm{cc} = \mathrm{Ind}( \cC_\mathrm{c} ) \simeq \mathcal{O}_\kappa , \]
    cf.\ the proof of Theorem 6.3 in \cite{BrundanStroppelSemiInfiniteHighestWeight}.
    If $\kappa < 0$, then every $\aff{g}$-module in $\mathcal{O}_\kappa$ is the union of its finite length submodules and all $\aff{g}$-modules in $\mathcal{O}_\kappa^\mathrm{fl}$ are compact in $\mathcal{O}_\kappa$.
    Consequently, for the lower finite highest weight category $\cC = \mathcal{O}_\kappa^\mathrm{fl}$, we have
    \[ \cC^\mathrm{cc} = \mathrm{Ind}(\cC) \simeq \mathcal{O}_\kappa \]
    by \cite[Corollary 6.3.5]{KashiwaraSchapiraCategoriesSheaves}.
\end{Remark}

Now for $\kappa \in \Q_{<0}$ (or $\kappa \in \Q_{>0}$) and $\lambda \in X^+$, let us write $\nabla_\kappa(\lambda)$ for the costandard object of highest weight $\lambda$ in the lower finite (or upper finite) highest weight category $\mathcal{O}_\kappa^\mathrm{fl}$ (or $\mathcal{O}_\kappa^\mathrm{fm}$, respectively), so $\nabla_\kappa(\lambda)$ is a dual Verma module.
We further write $T_\kappa(\lambda)$ for the tilting module of highest weight $\lambda$ and $P_\kappa(\lambda)$ for the projective cover of $L_\kappa(\lambda)$, if it exists.

\begin{Remark} \label{rem:affineBGGduality}
    Let $\kappa \in \Q_{<0}$.
    Then by Propositions 2.24--2.26 in \cite{KL12}, there is an equivalence
    \[ D \colon \mathcal{O}_\kappa \longrightarrow (\mathcal{O}_\kappa)^\mathrm{op} \]
    such that $D\big( L_\kappa(\lambda) \big) \cong L_\kappa( -w_0 \lambda )$ for all $\lambda \in X^+$, where $w_0$ denotes the longest element of the Weyl group of $\mathfrak{g}$.
    This implies that we also have
    \[ D\big( \Delta_\kappa(\lambda) \big) \cong \nabla_\kappa( -w_0 \lambda ) , \hspace{2cm} D\big( \nabla_\kappa(\lambda) \big) \cong \Delta_\kappa( -w_0 \lambda ) , \hspace{2cm} D\big( T_\kappa(\lambda) \big) \cong T_\kappa( -w_0 \lambda ) \]
    for all $\lambda \in X^+$.
\end{Remark}

Now we are ready to discuss Ringel duality for the categories $\mathcal{O}_\kappa$.

\begin{Theorem}[Arkhipov, Soergel] \label{thm:ArkhipovSoergel}
    For $\kappa \in \Q_{<0}$, the upper finite highest weight category $\mathcal{O}_{-\kappa}^\mathrm{fm}$ is the Ringel dual of the lower finite highest weight category $\mathcal{O}_\kappa^\mathrm{fl}$.
\end{Theorem}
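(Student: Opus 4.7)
The plan is to apply Remark~\ref{rem:tiltingequivalentprojectiveimpliesRingeldual}: it suffices to construct a $\kk$-linear equivalence
\[ \psi \colon \Tilt(\mathcal{O}_\kappa^{\mathrm{fl}}) \xrightarrow{~\sim~} \Proj^{\mathrm{fg}}(\mathcal{O}_{-\kappa}^{\mathrm{fm}}) \]
such that $\psi\big( T_\kappa(\lambda) \big) \cong P_{-\kappa}(\lambda)$ for all $\lambda \in X^+$. Note that the weight posets of $\mathcal{O}_\kappa^{\mathrm{fl}}$ and $\mathcal{O}_{-\kappa}^{\mathrm{fm}}$ are indeed opposite, namely $(X^+,\leq)$ and $(X^+,\leq^\mathrm{op})$ respectively, so that Remark~\ref{rem:tiltingequivalentprojectiveimpliesRingeldual} does apply once $\psi$ has been built.

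The existence of such an equivalence is the content of Arkhipov--Soergel duality \cite{ArkhipovSemiInfiniteAssociative,SoergelCharakterformeln}, which I would import after translating from the Kac--Moody setting to the affine setting via the blockwise equivalence \eqref{eq:categoryOequivalencecasimir} from Remark~\ref{rem:affinevsKacMoody}. Concretely, for every finite order ideal $\Lambda \subseteq X^+$ with respect to $\leq$, the Serre subcategory $\mathcal{O}_\kappa^{\mathrm{fl}}[\Lambda]$ generated by the $L_\kappa(\mu)$ with $\mu \in \Lambda$ is a finite highest weight category, and similarly $\mathcal{O}_{-\kappa}^{\mathrm{fm}}[\Lambda]$ is its finite upper-truncation counterpart. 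Soergel's arguments behind his character formulas for tilting modules at negative level (see in particular \cite[Theorem~11]{SoergelCharakterformeln}) yield an isomorphism of $\kk$-algebras
\[ \End_{\mathcal{O}_\kappa}\big( T_\kappa(\Lambda) \big)^{\mathrm{op}} \cong \End_{\mathcal{O}_{-\kappa}}\big( P_{-\kappa}(\Lambda) \big) , \]
where $T_\kappa(\Lambda)$ is a Ringel tilting generator of $\mathcal{O}_\kappa^{\mathrm{fl}}[\Lambda]$ and $P_{-\kappa}(\Lambda)$ is a projective generator of $\mathcal{O}_{-\kappa}^{\mathrm{fm}}[\Lambda]$. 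This produces truncated equivalences $\Tilt\big( \mathcal{O}_\kappa^{\mathrm{fl}}[\Lambda] \big) \simeq \Proj\big( \mathcal{O}_{-\kappa}^{\mathrm{fm}}[\Lambda] \big)$, and matching the labels $T_\kappa(\lambda) \leftrightarrow P_{-\kappa}(\lambda)$ follows by comparing characters using the Kazhdan--Lusztig character formulas of \cite{TanisakiCharacterFormulas,KashiwaraTanisakiKazhdanLusztigPositiveRational}.

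The truncated equivalences are compatible with inclusions of order ideals $\Lambda \subseteq \Lambda'$ (via recollement/Serre-quotient arguments standard for highest weight categories), so they assemble in the colimit to the required equivalence $\psi \colon \Tilt(\mathcal{O}_\kappa^{\mathrm{fl}}) \xrightarrow{\sim} \Proj^{\mathrm{fg}}(\mathcal{O}_{-\kappa}^{\mathrm{fm}})$. The expected obstacle is not the categorical mechanism, which is driven entirely by Remark~\ref{rem:tiltingequivalentprojectiveimpliesRingeldual}, but the bookkeeping translation from the results in \cite{SoergelCharakterformeln} (formulated for the Kac--Moody algebra $\kacmoody{g}$) to the affine setting used here, and checking that Soergel's endomorphism algebra isomorphism indeed pairs indecomposables as claimed. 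Alternatively, this bookkeeping is already carried out in \cite[Section~6.3, Theorem~6.4]{BrundanStroppelSemiInfiniteHighestWeight}, and one may simply invoke that result together with Remark~\ref{rem:affinevsKacMoody} to conclude.
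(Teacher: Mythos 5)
Your proposal follows the same high-level strategy as the paper: both go through \Cref{rem:tiltingequivalentprojectiveimpliesRingeldual} after importing Arkhipov--Soergel duality, translated from the Kac--Moody setting via \Cref{rem:affinevsKacMoody}. However, there are two places where your primary argument is imprecise and the paper is cleaner.

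First, the correct reference is not Soergel's Theorem~11 (the tilting character formula), but Theorem~6.6 and (the proof of) Corollary~7.6 in \cite{SoergelCharakterformeln}. Those produce a genuine contravariant exact equivalence
\[ t \colon \mathcal{O}_{-\kappa}^{\mathrm{fm},\Delta} \xrightarrow{~\sim~} ( \mathcal{O}_\kappa^{\mathrm{fl},\Delta} )^\mathrm{op} \]
on $\Delta$-filtered subcategories, which is what you actually need; character formulas alone do not yield a functor, and your appeal to the ``arguments behind'' Theorem~11 to build the endomorphism algebra isomorphism is a gap.

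Second, Soergel's functor sends $P_{-\kappa}(\lambda)$ to $T_\kappa(-w_0\lambda)$, so both a contravariance and a $-w_0$ twist must be undone before \Cref{rem:tiltingequivalentprojectiveimpliesRingeldual} applies with the label matching $T_\kappa(\lambda) \leftrightarrow P_{-\kappa}(\lambda)$. The paper handles this in one step by post-composing with the BGG duality $D$ from \Cref{rem:affineBGGduality}, which simultaneously reverses the arrows and cancels $-w_0$. Your proposal does not address this; the claim that ``matching the labels $T_\kappa(\lambda) \leftrightarrow P_{-\kappa}(\lambda)$ follows by comparing characters'' is not correct as stated, since characters only determine the labeling up to the order automorphism $-w_0$, and the ``op'' in your endomorphism-algebra isomorphism needs to be converted into a covariant equivalence with care.

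Your alternative route of invoking \cite[Theorem~6.4]{BrundanStroppelSemiInfiniteHighestWeight} directly, together with \Cref{rem:affinevsKacMoody}, is a legitimate shortcut that avoids these issues; the paper instead reproves the statement from Soergel's results because it wants explicit control of how the tilting--projective bijection interacts with the weight labeling.
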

\begin{proof}
    By Theorem 6.6 and the proof of Corollary 7.6 in \cite{SoergelCharakterformeln}, there is an equivalence
    \[ t \colon \mathcal{O}_{-\kappa}^{\mathrm{fm},\Delta} \xrightarrow{~\sim~} ( \mathcal{O}_\kappa^{\mathrm{fl},\Delta} )^\mathrm{op} \]
    that preserves exact sequences and satisfies
    \[ t( \Delta_{-\kappa}(\lambda) ) \cong \Delta_\kappa(-w_0\lambda) , \hspace{2cm} t( P_{-\kappa}(\lambda) ) \cong T_\kappa(-w_0\lambda) \]
    for all $\lambda \in X^+$.
    By composing with the equivalence $D \colon \mathcal{O}_\kappa^\mathrm{fl} \to (\mathcal{O}_\kappa^\mathrm{fl})^\mathrm{op}$ from \Cref{rem:affineBGGduality}, we obtain an equivalence $\Proj^\mathrm{fg}( \mathcal{O}_{-\kappa}^\mathrm{fm} ) \simeq \Tilt(\mathcal{O}_\kappa^\mathrm{fl})$ that sends $P_{-\kappa}(\lambda)$ to $T_\kappa(\lambda)$ for all $\lambda \in X^+$, and \Cref{rem:tiltingequivalentprojectiveimpliesRingeldual} implies that $\mathcal{O}_{-\kappa}^\mathrm{fm}$ is the Ringel dual of $\mathcal{O}_\kappa^\mathrm{fl}$, as required.
\end{proof}

Next we explain how for $\kappa \in \mathbb{Q}_{<0}$, the Ringel duality between $\mathcal{O}_\kappa$ and $\mathcal{O}_{-\kappa}$ can be enhanced to a monoidal Ringel duality in the sense of \Cref{thm:monoidalRingelduality_lowertoupper}.

First note that by the results of \cite{KL12,KL34}, there is a braided monoidal structure on $\mathcal{O}_\kappa^\mathrm{fl}$, and the latter canonically extends to a braided monoidal structure on $\mathcal{O}_\kappa \simeq \Ind( \mathcal{O}_\kappa^\mathrm{fl} )$.
Furthermore, the category $\mathcal{O}_\kappa^\mathrm{fl}$ is rigid provided that there is a non-zero weight $\lambda \in X^+$ such that $\Delta_\kappa(\lambda)$ is simple and rigid, see \cite[Theorem 32.1]{KL34}.
We say that $\kappa \in \Q_{<0}$ is \emph{KL-good} if there is a non-zero weight $\lambda \in X^+$ such that $\Delta_\kappa(\lambda)$ is simple and rigid.
Writing $\kappa = -p/q$ for non-negative coprime integers $p$ and $q$, Lemma 31.5 in \cite{KL34} implies that there is a constant $p_0$ (depending on the type of $\mathfrak{g}$) such that $\kappa$ is KL-good for all $p \geq p_0$.
If $\mathfrak{g}$ is of type $\mathrm{A}_n$ or $\mathrm{D}_{2n}$, then all $\kappa \in \Q_{<0}$ are KL-good, and if $\mathfrak{g}$ is of type $\mathrm{D}_{2n+1}$, then $\kappa = - p/q$ is KL-good for $p \neq 2$, by Lemmas 31.6 and 31.7 in \cite{KL34}.

Now let $D$ be the square of the ratio of the length of long roots to the length of short roots in the root system $\Phi$ of $\mathfrak{g}$,%
\footnote{In other words, we set $D=1$ if $\mathfrak{g}$ is of type $\mathrm{ADE}$, $D = 2$ if $\mathfrak{g}$ is of type $\mathrm{BCF}$ and $D = 3$ if $\mathfrak{g}$ is of type $\mathrm{G}_2$.}
and let $\zeta = \exp( \frac{\pi \mathrm{i}}{D\kappa} ) \in \C$.
We write $U_\zeta = U_\zeta(\mathfrak{g})$ for the quantum group (with divided powers and specialized at $\zeta$) corresponding to $\mathfrak{g}$ and consider the category $\Rep(U_\zeta)$ of finite-dimensional $U_\zeta$-modules with a weight space decomposition, as in \cite[Section 7]{TanisakiCharacterFormulas}.
For a dominant weight $\lambda \in X^+$, we write $\Delta_\zeta(\lambda)$ for the Weyl module over $U_\zeta$ and $L_\zeta(\lambda)$ for its unique simple quotient, so that the $U_\zeta$-modules $L_\zeta(\lambda)$ for $\lambda \in X^+$ form a set of representatives for the isomorphism classes of simple objects in $\Rep(U_\zeta)$.
The category $\Rep(U_\zeta)$ has a canonical braided monoidal structure, arising from the Hopf algebra structure and $R$-matrix of $U_\zeta$.
The following theorem is proven in \cite[Section 38]{KL34}; see also \cite[Section 8.4]{LusztigMonodromicSystems} and \cite{LusztigMonodromicSystemsErrata} for the non-simply laced case.

\begin{Theorem}[Kazhdan--Lusztig] 
    There is a braided monoidal functor
    \[ F_\kappa \colon \mathcal{O}_\kappa^\mathrm{fl} \longrightarrow \Rep(U_\zeta) . \]
    If $\kappa$ is KL-good, then $F_\kappa$ is an equivalence.
\end{Theorem}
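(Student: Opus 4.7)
The plan is essentially to cite \cite{KL12,KL34} (and, for non-simply laced types, \cite{LusztigMonodromicSystems,LusztigMonodromicSystemsErrata}); I do not see a way to avoid invoking their framework. The functor $F_\kappa$ is built via the fusion tensor product on $\mathcal{O}_\kappa^\mathrm{fl}$. On objects, fusion sends a pair $M_1, M_2$ of finite length modules to a certain space of coinvariants attached to $\mathbb{P}^1$ with three marked points, and the associativity and commutativity constraints are produced by parallel transport with respect to the Knizhnik--Zamolodchikov (KZ) connection on moduli of configurations of points. The functor $F_\kappa$ is then defined so that, at the level of braid group actions on $n$-fold fusion products of standard modules, KZ monodromy matches the $R$-matrix braiding of $U_\zeta$ at $\zeta = \exp(\pi\mathrm{i}/D\kappa)$.

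The identification of KZ monodromy with the quantum $R$-matrix is a Drinfeld--Kohno type statement, and it is the genuinely hard step; it is also the central content of \cite{KL12,KL34}. I would not try to reprove it. Structurally, the most one can do is first dispose of the simply laced cases from \cite{KL34} directly, and then reduce the non-simply laced types via the monodromic systems of \cite{LusztigMonodromicSystems} (with the correction in \cite{LusztigMonodromicSystemsErrata}).

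For the equivalence statement under the KL-good hypothesis the plan is as follows. By definition of KL-good, there exists a nonzero $\lambda \in X^+$ with $\Delta_\kappa(\lambda)$ simple and rigid; by \cite[Theorem~32.1]{KL34} this propagates through the braided tensor product to rigidity of all of $\mathcal{O}_\kappa^\mathrm{fl}$. Once both $\mathcal{O}_\kappa^\mathrm{fl}$ and $\Rep(U_\zeta)$ are rigid braided monoidal, one checks that $F_\kappa$ sends standard modules to Weyl modules, $F_\kappa\bigl(\Delta_\kappa(\mu)\bigr) \cong \Delta_\zeta(\mu)$ for all $\mu \in X^+$, and combined with the matching of composition multiplicities coming from the character formulas of \cite{KashiwaraTanisakiKazhdanLusztigPositiveRational,TanisakiCharacterFormulas}, this yields an exact braided monoidal functor onto a full subcategory of $\Rep(U_\zeta)$ that contains all simple objects. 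Fully faithfulness on standards and essential surjectivity then follow from rigidity together with standard arguments about tensor functors between locally finite rigid braided monoidal categories.

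The single genuine obstacle in the entire program is the Drinfeld--Kohno identification at the specific parameter $\zeta = \exp(\pi\mathrm{i}/D\kappa)$; this is the content that the bulk of \cite{KL34} is devoted to, and I would not expect a shorter proof.
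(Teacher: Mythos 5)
Your proposal matches the paper's treatment exactly: the paper gives no independent proof of this theorem but simply cites \cite[Section 38]{KL34} (and \cite[Section 8.4]{LusztigMonodromicSystems}, \cite{LusztigMonodromicSystemsErrata} for the non-simply laced case), just as you do. Your accompanying sketch of the fusion/KZ construction and the rigidity propagation via \cite[Theorem 32.1]{KL34} is an accurate summary of the cited content at the level of detail required here.
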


From now on, we assume that $\kappa \in \Q_{<0}$ is KL-good.
The subcategory $\mathcal{O}_{\kappa}^{\mathrm{fl},\Delta}$ is closed under tensor products in $\mathcal{O}_\kappa^\mathrm{fl}$ by \cite[Proposition 28.1]{KL34}, and using the rigidity of $\mathcal{O}_\kappa^\mathrm{fl}$, it follows that $\Tilt(\mathcal{O}_\kappa)$ is also closed under tensor products.
(Alternatively, we can deduce this from the corresponding fact about tilting modules in $\Rep(U_\zeta)$ \cite{ParadowskiFiltration}.)
Thus, our monoidal Ringel duality from \Cref{thm:monoidalRingelduality_lowertoupper} gives the following result.

\begin{Theorem} \label{thm:positivelevelmonoidalstructure}
    Let $\kappa \in \Q_{>0}$ such that $-\kappa$ is KL-good and let $\zeta = \exp(-\frac{\pi \mathrm{i}}{D \kappa})$.
    \begin{enumerate}
        \item There is a braided monoidal structure on $\mathcal{O}_{\kappa}$ and a braided monoidal functor
    \[ G_\kappa \colon \mathcal{O}_{\kappa} \longrightarrow \Ind\big(  \Rep(U_\zeta) \big) \]
    such that $G_\kappa\big( \Delta_\kappa(\lambda) \big) \cong \nabla_\zeta(\lambda)$ for all $\lambda \in X^+$.
    \item The subcategory $\mathcal{O}_\kappa^\mathrm{fg}$ is closed under tensor products and the functor $G_\kappa$ restricts to a functor
    \[ \mathcal{O}_\kappa^\mathrm{fg} \longrightarrow \Rep(U_\zeta) . \]
    \item The finitely generated projective objects in $\mathcal{O}_\kappa$ are precisely the rigid objects in $\mathcal{O}_\kappa^\mathrm{fg}$.
    \item The functor $G_\kappa$ is exact and essentially surjective.
    \item The functor $G_\kappa$ admits a right adjoint functor $G'_\kappa \colon \Ind( \Rep(U_\zeta) ) \to \mathcal{O}_\kappa$ that is fully faithful and exhibits $\mathcal{O}_{-\kappa} \simeq \Ind( \Rep(U_\zeta) )$ as a reflective subcategory of $\mathcal{O}_\kappa$.
    \end{enumerate}
\end{Theorem}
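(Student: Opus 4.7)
My plan is to apply the lower-to-upper monoidal Ringel duality (\Cref{thm:monoidalRingelduality_lowertoupper}) to $\mathcal{O}_{-\kappa}^{\mathrm{fl}}$ equipped with the Kazhdan--Lusztig braided rigid monoidal structure, and to define $G_\kappa$ as the composition of the resulting opposite Ringel duality functor with the extension $\Ind(F_\kappa)\colon \mathcal{O}_{-\kappa} \simeq \Ind(\Rep(U_\zeta))$ of the Kazhdan--Lusztig equivalence. First I verify the hypotheses: since $-\kappa$ is KL-good, $\mathcal{O}_{-\kappa}^{\mathrm{fl}}$ is rigid (so its tensor product is biexact), and $\mathcal{O}_{-\kappa}^{\mathrm{fl},\Delta}$ is closed under tensor products by \cite[Proposition~28.1]{KL34}; by duality, using rigidity, so is $\mathcal{O}_{-\kappa}^{\mathrm{fl},\nabla}$, whence $\Tilt(\mathcal{O}_{-\kappa}^{\mathrm{fl}})$ is a monoidal subcategory. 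Combined with $\mathcal{O}_\kappa^{\mathrm{fm}} \simeq (\mathcal{O}_{-\kappa}^{\mathrm{fl}})^\vee$ from \Cref{thm:ArkhipovSoergel}, \Cref{thm:monoidalRingelduality_lowertoupper} then endows $\mathcal{O}_\kappa \simeq (\mathcal{O}_{-\kappa}^{\mathrm{fl}})^{\vee,\mathrm{cc}}$ with a braided monoidal structure in which $\Proj^{\mathrm{fg}}(\mathcal{O}_\kappa^{\mathrm{fm}})$ is a monoidal subcategory, and enhances $L \colon \mathcal{O}_\kappa \to \mathcal{O}_{-\kappa}$ to a braided monoidal functor, establishing the braided monoidal structure of part~(1).

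The identification $G_\kappa(\Delta_\kappa(\lambda)) \cong \nabla_\zeta(\lambda)$ then follows from $L(\Delta_\kappa(\lambda)) = L(\Delta^\vee(\lambda)) \cong \nabla_{-\kappa}(\lambda)$ (\Cref{thm:Ringelduality}) and the compatibility of $F_\kappa$ with highest weight structures. Part~(2) is immediate from part~(2) of \Cref{thm:monoidalRingelduality_lowertoupper}, together with the observation that $L$ sends a finitely generated projective $P_\kappa(\lambda)$ to the finite-length tilting module $T_{-\kappa}(\lambda)$, hence sends any finitely generated object (a quotient of such a projective) to an object of finite length in $\mathcal{O}_{-\kappa}^{\mathrm{fl}}$. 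Part~(3) combines part~(5) of \Cref{thm:monoidalRingelduality_lowertoupper} with the rigidity of $\Tilt(\mathcal{O}_{-\kappa}^{\mathrm{fl}})$ inherited from $\mathcal{O}_{-\kappa}^{\mathrm{fl}}$. For part~(5), I take $G'_\kappa := R \circ \Ind(F_\kappa)^{-1}$ and reduce fully faithfulness of $G'_\kappa$ to that of $R$, which I obtain from \Cref{prop:RingelDualReflective} using the standard fact that, at a root of unity, $\Rep(U_\zeta) \simeq \mathcal{O}_{-\kappa}^{\mathrm{fl}}$ has enough projective objects all of which are tilting modules.

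Essential surjectivity of $G_\kappa$ in part~(4) reduces to that of $L$: since $L$ is cocontinuous, $L(P_\kappa(\lambda)) \cong T_{-\kappa}(\lambda)$, and every object of $\mathcal{O}_{-\kappa}^{\mathrm{fl}}$ is a cokernel of a map between projective (hence tilting) objects that lie in the essential image of $L$, every object of $\mathcal{O}_{-\kappa}^{\mathrm{fl}}$ is hit, and then so is every object of $\mathcal{O}_{-\kappa}$ by passing to filtered colimits. The main technical obstacle is the exactness of $L$: as a left adjoint, $L$ is automatically right exact, but left exactness requires the vanishing of $\mathbb{L}_i L$ for $i > 0$. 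My plan is to compute these derived functors via projective resolutions in $\mathcal{O}_\kappa^{\mathrm{fm}}$ together with the correspondence $P_\kappa(\lambda) \leftrightarrow T_{-\kappa}(\lambda)$, and then to argue, using the Ringel-dual $t$-structure on $K^b(\Proj^{\mathrm{fg}}(\mathcal{O}_\kappa^{\mathrm{fm}}))$ from \Cref{thm:Ringeldualityexceptioncollection} and the fact that projectives in $\Rep(U_\zeta)$ are themselves tilting modules, that the resulting tilting complexes are concentrated in cohomological degree zero in $D^b(\mathcal{O}_{-\kappa}^{\mathrm{fl}})$; controlling this $t$-exactness in the semi-infinite setting is the genuinely hard point of the proof.
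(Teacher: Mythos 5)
Your overall approach is the same as the paper's: apply lower-to-upper monoidal Ringel duality (\Cref{thm:monoidalRingelduality_lowertoupper}) to $\cC = \mathcal{O}_{-\kappa}^{\mathrm{fl}}$ via the Arkhipov--Soergel identification $\mathcal{O}_\kappa^{\mathrm{fm}} \simeq \cC^\vee$ (\Cref{thm:ArkhipovSoergel}), define $G_\kappa$ as the opposite Ringel duality functor $L_\cC$ composed with the Kazhdan--Lusztig equivalence, and obtain parts (1)--(3) directly. Your treatment of part (5) via \Cref{prop:RingelDualReflective}, using the fact that $\Rep(U_\zeta)$ has enough projective objects and that all projectives are tilting, is exactly the paper's argument. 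For essential surjectivity in part (4), your argument via right exactness, $L(P_\kappa(\lambda)) \cong T_{-\kappa}(\lambda)$, and cokernel presentations of objects of $\mathcal{O}_{-\kappa}^{\mathrm{fl}}$ is valid, but the paper derives it for free from part (5): since $G_\kappa'$ is fully faithful, the adjunction counit $G_\kappa \circ G_\kappa' \to \id$ is an isomorphism, so $G_\kappa$ is essentially surjective.

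The genuine gap is the exactness of $G_\kappa$ in part (4). You correctly identify this as the hard point, but the route you propose (computing the left derived functors $\mathbb{L}_i L$ and controlling the Ringel-dual $t$-structure on $K^b(\Proj^{\mathrm{fg}}(\mathcal{O}_\kappa^{\mathrm{fm}}))$ directly) is left as a plan, and the $t$-exactness of $L$ that you'd need is precisely the thing being proved, so there is a circularity risk. The idea you are missing is to exploit the \emph{injectivity} of tilting objects on both sides, not just their projectivity. Since $\Rep(U_\zeta)$ is a Frobenius category in which projectives and injectives coincide and both are tilting modules, the Chevalley duality $D \colon \mathcal{O}_{-\kappa}^{\mathrm{fl}} \xrightarrow{\sim} (\mathcal{O}_{-\kappa}^{\mathrm{fl}})^{\mathrm{op}}$ of \Cref{rem:affineBGGduality} transfers, via \Cref{rem:Chevalleydualitylowertoupper}, to a duality on $\cC^\vee \simeq \mathcal{O}_\kappa^{\mathrm{fm}}$ exchanging projectives and injectives and preserving tiltings. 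Combined with \Cref{thm:Ringelduality} (injectives tilting in $\cC$ forces tiltings projective in $\cC^\vee$), this shows that every tilting object in $\mathcal{O}_\kappa^{\mathrm{fm}}$ is also \emph{injective}. Exactness of $G_\kappa$ then follows from a short adjunction argument rather than a derived-functor computation: for any short exact sequence $0 \to X \to Y \to Z \to 0$ in $\mathcal{O}_\kappa$ and any injective $I$ of $\cC$, the object $G'_\kappa(I)$ is tilting (\Cref{thm:Ringelduality}) hence injective in $\cC^\vee$, so applying $\Hom_{\cC^\vee}(-,G'_\kappa I)$ preserves exactness; by the adjunction $G_\kappa \dashv G'_\kappa$ this equals $\Hom_\cC(G_\kappa(-),I)$, and since $\cC$ has enough injectives, the sequence $0 \to G_\kappa X \to G_\kappa Y \to G_\kappa Z \to 0$ is exact. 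This replaces the heavy $t$-structure computation in your sketch with a two-line Yoneda argument, and it is the content that needs to be supplied to close the proof.
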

\begin{proof}
    Let us write $\cC = \mathcal{O}_{-\kappa}^\mathrm{fl}$ and recall from \Cref{thm:ArkhipovSoergel} that $\mathcal{O}_\kappa^\mathrm{fm} \simeq \cC^\vee$ is the Ringel dual of $\cC$.
    By our assumption on $\kappa$, the category $\mathcal{O}_{-\kappa}^\mathrm{fl}$ is rigid and there is a monoidal equivalence $\mathcal{O}_{-\kappa}^\mathrm{fl} \simeq \Rep(U_\zeta)$, and so the claims (1)--(3) are immediate from \Cref{thm:monoidalRingelduality_lowertoupper} if we define
    \[ G_\kappa = L_\cC \colon \quad \mathcal{O}_\kappa \simeq \cC^{\vee,\mathrm{cc}} \longrightarrow \cC^\mathrm{cc} \simeq \mathcal{O}_{-\kappa} \simeq \Ind( \Rep(U_\zeta) ) \]
    to be the opposite Ringel duality functor (composed with the Kazhdan--Lusztig equivalence $F_\kappa$).
    In order to prove the remaining claims, first note that by \cite[Theorem 10.12]{NegronRevisitingSteinberg}, the category $\Rep(U_\zeta)$ has enough injective and projective objects and that injectives and projectives coincide.
    Since all projectives admit a standard filtration and all injectives admit a costandard filtration, all projective and injective objects of $\mathcal{O}_{-\kappa}^\mathrm{fl} \simeq \Rep(U_\zeta)$ are also tilting modules, and \Cref{prop:RingelDualReflective} implies that the Ringel duality functor
    \[ G'_\kappa = R_\cC \colon \quad \Ind( \Rep(U_\zeta) ) \simeq \mathcal{O}_{-\kappa} \simeq \cC^\mathrm{cc} \longrightarrow \cC^{\vee,\mathrm{cc}} \simeq \mathcal{O}_\kappa \]
    is fully faithful and exhibits $\mathcal{O}_{-\kappa} \simeq \Ind( \Rep(U_\zeta) )$ as a reflective subcategory of $\mathcal{O}_\kappa$.
    As $G_\kappa'$ is right adjoint to $G_\kappa$ (cf.\ Subsection~\ref{subsec:tiltingobjectsRingelduality}), this establishes claim (5).
    By general properties of reflective subcategories (see \cite[Section IV.3]{MacLaneCategories}), we further obtain that the counit of the adjunction $G_\kappa \dashv G'_\kappa$ is an isomorphism
    \[ G_\kappa \circ G'_\kappa \cong \id_{\mathcal{O}_{-\kappa}} , \]
    and this implies that $G_\kappa$ is  essentially surjective.
    Finally, as all injective objects in $\cC$ are tilting, all tilting objects in $\cC^\vee$ are projective by \Cref{thm:Ringelduality}.
    By Remarks~\ref{rem:Chevalleydualitylowertoupper} and \ref{rem:affineBGGduality}, the category $\cC^\vee$ admits a duality $D \colon \cC^\vee \xrightarrow{~\sim~} \cC^{\vee,\mathrm{op}}$ that sends tilting modules to tilting modules and projective modules to injective modules,%
    \footnote{Alternatively, one can define the usual duality on the BGG category $\mathbb{O}_\kappa$ for the Kac--Moody algebra $\widetilde{\mathfrak{g}}$ as in \cite[Section 2.1]{KumarKacMoody} and use \Cref{rem:affinevsKacMoody}.}
    hence all tilting modules in $\cC^\vee$ are also injective.
    Now for every short exact sequence $0 \to X \to Y \to Z \to 0$ in $\mathcal{O}_\kappa$ and for every injective module $I$ in $\cC$, we have that $G'_\kappa(I)$ is tilting (by \Cref{thm:Ringelduality}) and hence injective in $\cC^\vee$, and using the adjunction $G_\kappa \dashv G'_\kappa$, we obtain a commutative diagram
    \[ \begin{tikzcd}
        0 \ar[r] & \Hom_{\cC^\vee}\big( Z , G'_\kappa I \big) \ar[r] & \Hom_{\cC^\vee}\big( Y , G'_\kappa I \big) \ar[r] & \Hom_{\cC^\vee}\big( X , G'_\kappa I \big) \ar[r] & 0 \\
        0 \ar[r] & \Hom_\cC\big( G_\kappa Z , I \big) \ar[r] \arrow[u, phantom, sloped, "\cong"] & \Hom_\cC\big( G_\kappa Y , I \big) \ar[r] \arrow[u, phantom, sloped, "\cong"] & \Hom_\cC\big( G_\kappa X , I \big) \ar[r] \arrow[u, phantom, sloped, "\cong"] & 0
    \end{tikzcd} \]
    with exact rows because $G'_\kappa I$ is injective.
    As $\cC$ has enough injectives, this implies that the sequence
    \[ 0 \to G_\kappa X \to G_\kappa Y \to G_\kappa Z \to 0 \]
    is exact, and so $G_\kappa$ is exact, as claimed.%
    \footnote{Our proof of the fact that $G_\kappa$ is exact was adapted from the MathOverflow post \url{https://math.stackexchange.com/questions/1639186/exactness-of-a-right-adjoint-functor}.
    An alternative way to prove this is to note that by the definition of the opposite Ringel duality functor in \cite[Equation (4.15)]{BrundanStroppelSemiInfiniteHighestWeight}, this functor is exact provided that all tilting modules in $\cC'$ are injective.}
\end{proof}

\begin{Remark}
    \Cref{thm:positivelevelmonoidalstructure} recovers and generalizes an analogous result that was obtained by McRae--Yang for $\mathfrak{g}=\mathfrak{sl}_2(\C)$.
    More precisely, for $\kappa \in \Q_{>0}$, it is shown in \cite[Theorem 1.1]{McRaeYang} that the category $\mathcal{O}_\kappa^\mathrm{fl}$ admits a braided monoidal structure, and if $\kappa = \frac{p}{q}$ with $p \in \Z_{\geq 2}$ and $q \in \Z_{\geq 1}$, then there is an exact and essentially surjective monoidal functor
    \[ H_\kappa \colon \mathcal{O}_\kappa^\mathrm{fl} \longrightarrow \Rep(U_\zeta) . \]
    by \cite[Theorem 1.6]{McRaeYang} (there denoted by $\mathcal{F}$).
    As the Verma modules in $\mathcal{O}_\kappa$ have finite length in the special case $\mathfrak{g} = \mathfrak{sl}_2(\C)$ (see for instance \cite[Theorem 2.2]{McRaeYang}), the categories $\mathcal{O}_\kappa^\mathrm{fm}$, $\mathcal{O}_\kappa^\mathrm{fg}$ and $\mathcal{O}_\kappa^\mathrm{fl}$ all coincide in this case, so that our \Cref{thm:positivelevelmonoidalstructure} directly generalized the results of McRae--Yang.

    McRae--Yang also explicitly describe the kernel of their functor $H_\kappa$ in \cite[Theorem 1.5]{McRaeYang}.
    In our setting (i.e.\ for arbitrary $\mathfrak{g}$), it is also possible to describe the kernel of $G_\kappa$ using the techniques from the proof of \Cref{thm:positivelevelmonoidalstructure}:
    For all $\lambda \in X^+$, let $I_\zeta(\lambda)$ be the injective hull of $L_\zeta(\lambda)$ and let $\hat \lambda \in X^+$ be the unique dominant weight with $I_\zeta(\lambda) \cong T_\zeta(\hat \lambda)$.
    Then by \Cref{thm:Ringelduality}, we have
    \[ T_\kappa(\lambda) \cong G_\kappa' I_\zeta(\lambda) \cong G_\kappa' T_\zeta(\hat \lambda) \cong P_\kappa(\hat \lambda) , \]
    and using the equivalence $\mathcal{O}_\kappa \simeq \mathcal{O}_\kappa^\mathrm{op}$ from the proof of \Cref{thm:positivelevelmonoidalstructure}, we further get
    \[ I_\kappa(\hat \lambda) \simeq P_\kappa(\hat \lambda) \simeq G_\kappa' I_\zeta(\lambda) . \]
    Now for an object $M$ of $\mathcal{O}_\kappa$, we have $G_\kappa M = 0$ if and only if
    \[ 0 = \Hom_{U_\zeta}\big( G_\kappa M , I_\zeta(\lambda) \big) \cong \Hom_{\mathcal{O}_\kappa}\big( M , G_\kappa' I_\zeta(\lambda) \big) \cong \Hom_{\mathcal{O}_\kappa}\big( M , I_\kappa(\hat \lambda) \big) \]
    for all $\lambda \in X^+$.
    In other words, $M$ belongs to the kernel of $G_\kappa$ if and only if $M$ has no simple subquotients $L_\kappa(\mu)$ with $\mu \in \hat X^+ \coloneqq \{ \hat \lambda \mid \lambda \in X^+ \}$, or equivalently, if all simple subquotients of $M$ are of the form $L_\kappa(\mu)$ with $\mu \in X^+ \setminus \hat X^+$.
    If $\ell = \mathrm{ord}(\zeta)$ is odd (and coprime to $3$ if $\mathfrak{g}$ is of type $\mathrm{G}_2$), then the map $\lambda \mapsto \hat \lambda$ is explicitly described in \cite[Section 5.5]{AndersenpfiltrationSteinberg}, and it is shown that $\hat X^+ = (\ell-1) \cdot \rho + X^+$, where $\rho$ denotes the half sum of positive roots in the root system of $\mathfrak{g}$.%
    \footnote{For an arbitrary level $\kappa \in \Q_{>0}$ such that $-\kappa$ is KL-good, one can give an analogous description of $\hat X^+$ using the arguments in \cite[Section 5.5]{AndersenpfiltrationSteinberg} and the treatment of Steinberg modules in \cite{NegronRevisitingSteinberg}.
    We leave the details to the interested reader.}
    We note that this matches the description of the kernel of $H_\kappa$ in \cite{McRaeYang}.
\end{Remark}

\appendix

\section{Reduced \texorpdfstring{$n$}{n}-ary relations}
\label{sec:relations}

Fix a regular category $\cR$. We will describe some aspects of a kind of operadic structure formed by $n$-ary relations, for $n\ge0$. This is used to derive results on what we call \emph{reduced} $n$-ary relations, which is used and referenced in the body of the document, specifically for proving \Cref{lem:U-D-basics}(c) and \Cref{lem:tri-decomposition}.

Recall that for us, a regular category is finitely complete, and we use the terms \emph{injective} and \emph{surjective} when referring to monomorphisms and extremal epimorphisms in $\cR$. For any morphism $f$ in $\cR$, we denote its source and target object by $S(f)$ and $T(f)$, respectively.

For any $n\ge0$, an \emph{$n$-ary span} is a tuple $f = (f_1,\dots,f_n)$ of morphisms in $\cR$ with a common source object $S(f):=S(f_1)=\dots=S(f_n)$. An isomorphism of $n$-ary spans $f$ and $f'$ is an isomorphism $\phi\:S(f)\to S(f')$  such that $f_i=f'_i \phi$ for all $i$. An $n$-ary span as above is called \emph{jointly injective} if the induced morphism
$$
S(f)\to T(f_1)\times\dots\times T(f_n)
$$
is injective. An \emph{$n$-ary relation} is an isomorphism class of jointly injective $n$-ary spans. We will usually describe $n$-ary relations or isomorphism classes of $n$-ary spans by $n$-ary spans. 

\begin{Definition} Let $f$ be an $n$-ary span, fix some $1\le j\le n$. Let $s$ be a $2$-ary span, i.e., an ordinary span. We define the \emph{$j$-composition}
$$
f *_j s
$$
as the isomorphism class of the $n$-ary span $(f'_i)_i$, where $f'_i:=f_i$ for all $i\neq j$, and $f'_j:=s_2 s_1^*(f_j)$, where $s_1^*(f_j)$ is a the pullback of $f_j$ along $s_1$.
\end{Definition}

For any $n,j,s$ as in the definition, the operation $-*_j s$ extends to an operation on isomorphism classes of $n$-ary spans.

\begin{Lemma} For spans $s,s'$, an $n$-ary span $f$, and any $1\le j\neq k\le n$,
$$
(f *_j s) *_j s' = f *_j (s*_2 s') ,
\quad
(f *_j s) *_k s' = (f *_k s') *_j s .
$$
\end{Lemma}

\begin{proof} The first identity follows from associativity of pullbacks. The second identity follows immediately from the definitions.
\end{proof}

\begin{Lemma} \label{lem:joint-injective} Consider a diagram
$$
\begin{tikzcd}[column sep=1cm, row sep=0.3cm,remember picture]
 && \ti R 
    \ar[ld,"\ti s_1",swap]
    \ar[rd,"\ti f_c"] \\
 &  R 
    \ar[ld,"{(f_a,f_b)}",swap] 
    \ar[rd,"f_c"] 
 && \cdot 
    \ar[ld,"s_1"] 
    \ar[rd,"s_2",hookrightarrow] \\
 A \times B && C && C'
\end{tikzcd}
$$
in $\cR$, where the square is a pullback and $s_2$ is injective. Assume $R\xrightarrow{(f_a,f_c)} A\times C$ is injective. Then $\ti R\xrightarrow{(f_a \tilde s_1,s_2 \tilde f_c)} A\times C'$ is injective.    
\end{Lemma}

\begin{proof} Assume $h_1,h_2$ are parallel morphisms with target $\ti R$, such that 
$$
(f_a \ti s_1, s_2 \ti f_c) h_1
=(f_a \ti s_1, s_2 \ti f_c) h_2 .
$$
As $s_2$ is injective, it can be canceled and we get 
$$
(f_a \ti s_1, \ti f_c)h_1
=(f_a \ti s_1, \ti f_c)h_2 .
$$
Using a suitable postcomposition, this yields
$$
(f_a \ti s_1, s_1 \ti f_c, \ti f_c)h_1
= (f_a \ti s_1, s_1 \ti f_c, \ti f_c)h_2 ,
$$
but $s_1 \ti f_c=f_c \ti s_1$, hence we get
$$
(f_a \ti s_1, f_c \ti s_1, \ti f_c)h_1
=(f_a \ti s_1, f_c \ti s_1, \ti f_c)h_2 .
$$
Now $(f_a,f_c)$ is injective by our assumptions, so we can cancel it to obtain
$$
(\ti s_1, \ti f_c)h_1
=(\ti s_1, \ti f_c)h_2 .
$$
But this means $h_1=h_2$, as the morphisms agree on the components of the fiber product $\ti R$.
\end{proof}

To simplify arguments, we will sometimes view an $n$-ary span $f$ as the $2$-ary span $\sigma_j(f)$ given by $((f_1,\dots,\widehat{f_j},\dots, f_n),f_j)$, where $f_j$ is omitted as indicated by the hat decoration in the first component, for any $n\ge2$ and $1\le j\le n$. For a $1$-ary span $f=(f_1)$ we define $\sigma_1(f)$ as the $2$-ary span $(\id,f)$. It can be seen that these definitions of $\sigma_j$ extend to isomorphism classes of spans and to relations.

\begin{Lemma} Assume $s$ is a span and $f$ is an $n$-ary span such that $f$ is jointly injective and $s_2$ is injective. Then $f*_j s$ is jointly injective and $-*_js$ induces an operation on $n$-ary relations, for all $1\le j\le n$.
\end{Lemma}

\begin{proof} Using $\sigma_j$, it suffices to consider the case $n=2$ and $j=2$.

Now the first assertion follows immediately from \Cref{lem:joint-injective}.

For the second assertion, it suffices to note that the operation $-*_j s$ not only preserves jointly injective spans, but is also compatible with isomorphisms between the sources of two such spans.
\end{proof}

\begin{Definition} An $n$-ary span $(f_i)_i$ is called \emph{$j$-reduced}, for some $1\le j\le n$, if $f_j$ is surjective and  the induced morphism
$$ (f_1,\dots,\widehat{f_j},\dots,f_n)\:
T(f)\to S(f_1)\times\dots\times\widehat{S(f_j)}\times\dots\times S(f_n) ,
$$
is injective, where $f_j$ and $S(f_j)$ are omitted.
\end{Definition}

Again, the definition extends to isomorphism classes of spans. Moreover, an $n$-ary relation is called $j$-reduced if any or, equivalently, every $n$-ary span representing it is $j$-reduced.

Note that $f$ is $j$-reduced if and only if $\sigma_j(f)$ is $2$-reduced, and a $2$-span $f$ is $2$-reduced if and only if $f_1$ is injective and $f_2$ is surjective.

\begin{Definition} Let $\cS^-$ be the set of spans $s$ such that $s_1$ is surjective and $s_2$ is injective.
\end{Definition}

From now on, assume $\cR$ is an \emph{exact Mal'cev} category, i.e., $\cR$ is regular, every congruence relation in $\cR$ is a kernel pair, and every reflexive relation in $\cR$ is an equivalence relation. The main relevant feature of such categories for us is the following fact:

\begin{Lemma}[{\cite[{A.1.2~Proposition}]{KnopTensorEnvelopes}, \cite[Theorem~5.7]{CKP}}] \label{lem:exact-malcev} Any pair of surjective morphisms with a common source in $\cR$ has a pushout. Moreover, a diagram of the shape
$$
\begin{tikzcd}
\cdot \ar[r,two heads] \ar[d,two heads] 
    & \cdot \ar[d,two heads] \\ 
\cdot \ar[r,two heads] 
    & \cdot \\ 
\end{tikzcd}
$$ in $\cR$ consisting of four surjective morphisms is a pullback diagram if and only if it is a pushout diagram such that the unique span contained in the diagram is jointly injective. 
\end{Lemma}

\begin{Lemma} \label{lem:reduction-exists} For any $n$-ary relation $f$ and any $1\le j\le n$, there is a span $s$ in $\cS^-$ and an $n$-ary relation $g$ that is $j$-reduced such that $f=g*_j s$.    
\end{Lemma}

\begin{proof} Using $\sigma_j$, it suffices again to consider the case $n=2$, $j=2$, which is explained in slightly different language at the beginning of \cite[Section 5]{KnopTensorEnvelopes}, as follows:

Let $\coim(f_1)$, $\im(f_1)$, $\coim(f_2)$, $\im(f_2)$ be morphisms forming image factorizations of $f_1$ and $f_2$, respectively. 
$$
\begin{tikzcd}[column sep=1cm, row sep=0.3cm,remember picture]
 && \cdot 
    \ar[ld,two heads,"\coim(f_1)",swap]
    \ar[rd,two heads,"\coim(f_2)"] \\
 &  \cdot 
    \ar[ld,hookrightarrow,"\im(f_1)",swap] 
    \ar[rd,two heads,"g_1"] 
 && \cdot 
    \ar[ld,two heads,"g_2"] 
    \ar[rd,hookrightarrow,"\im(f_2)"] \\
 \cdot && \cdot && \cdot
\end{tikzcd} ,
$$

As the morphisms $\coim(f_1)$ and $\coim(f_2)$ are surjective, their pushout exists and is at the same time a pullback by \Cref{lem:exact-malcev}. Let $g_1$ and $g_2$ be the two other maps in the resulting bicartesian square. As the pushouts of surjective morphisms, they are surjective. Then $f$ factors as $(\im(f_1),g_1)*_2(g_2,\im(f_2))$, in the desired way.   
\end{proof}

\begin{Lemma} \label{lem:reduction-preserved} Let $f$ be an $n$-ary relation, let $s$ be a span in $\cS^-$. Then $f\circ_j s$ is $k$-reduced if $f$ is $k$-reduced, for all $1\le j\neq k\le n$.
\end{Lemma}

\begin{proof} We may assume $n\ge2$. Set $X_\bullet:=\prod_{j\neq i\neq k} X_i$ and let $f_\bullet$ be the $(n-2)$-ary span obtained by removing $f_j$ and $f_k$ from $f$. We can represent the situation using the diagram
$$
\begin{tikzcd}[column sep=1cm, row sep=0.3cm,remember picture]
 && \cdot 
    \ar[ld,two heads,"\ti s_1",swap]
    \ar[rd,"\ti f_j"] \\
 &  \cdot 
    \ar[ld,"{(f_\bullet,f_k)}",swap] 
    \ar[rd,"f_j"] 
 && \cdot 
    \ar[ld,"s_1",two heads] 
    \ar[rd,"s_2",hookrightarrow] \\
 X_\bullet \times X_k && X_j && \cdot
\end{tikzcd} ,
$$
where the square is bicartesian. The span in the bottom left of the diagram represents $f$, and the span along the outer contour represents $f*_j s$. Note that $\ti s_1$ is surjective, as it is a pullback of the surjective morphism $s_1$.

Now assume $f$ is $k$-reduced, i.e., $f_k$ is surjective and $(f_\bullet,f_j)$ is injective. Then $f_k \ti s_1$ is surjective and $(f_\bullet \tilde s_1,s_2 \tilde f_j)$ is injective by \Cref{lem:joint-injective}. This means $f*_j s$ is $k$-reduced.
\end{proof}

\begin{Lemma} \label{lem:reduction-unique} If $s,s'\in\cS^-$, $g,g'$ are $n$-ary relations that are $j$-reduced, for some $1\le j\le n$, and
$$
g*_j s=g'*_j s' .
$$
Then there exists an isomorphism $\alpha$ such that 
$$
g' = g *_j \sigma_1(\alpha) ,
\quad
s' = \sigma_1(\alpha^{-1}) *_2 s .
$$
\end{Lemma}

\begin{proof} Using $\sigma_j$, this can again be reduced to the case $n=2$ and $j=2$. Then the two factorizations of the same relation can be represented by the following diagram, up to the dashed arrows:
$$
\begin{tikzcd}[column sep=1cm, row sep=0.3cm,remember picture]
&& \cdot \ar[ld,two heads] \ar[rd,two heads] \ar[ddddd,bend left=90, looseness=3.5,"\phi"]
\\
& S(g) \ar[ld,hookrightarrow] \ar[rd,two heads] \ar[ddd,dashed,"\psi_1"] 
&& S(s) \ar[ddd,dashed,"\psi_2"] \ar[ld,two heads] \ar[rd,hookrightarrow]
\\
\cdot \ar[d,"\id"] && \cdot \ar[d,"\psi",dashed]  && \cdot \ar[d,"\id"] 
\\
\cdot && \cdot && \cdot
\\
& S(g') \ar[lu,hookrightarrow] \ar[ru,two heads] 
&& S(s') \ar[lu,two heads] \ar[ru,hookrightarrow] 
\\
&& \cdot \ar[lu,two heads] \ar[ru,two heads] 
\end{tikzcd} ,
$$
where the top and bottom squares are bicartesian and $\phi$ is an isomorphism forming a commutative diagram with (both halves of) the outer contour of the remaining diagram. Uniqueness of the image factorization in $\cR$ then implies the existence of $\psi_1$, $\psi_2$ as in the diagram such that the squares formed by $\psi_i$ and $\phi$, and by $\psi_i$ and an identity, for $i=1,2$, are all commutative diagrams. Uniqueness of the pushout then implies the existence of $\psi$ as in the diagram such that the squares formed by $\psi$ and $\psi_i$ are commutative diagrams, for $i=1,2$. Now $\alpha:=\psi$ is as desired.
\end{proof}

\begin{Lemma} \label{lem:pullback-product} Let $\cdot\xrightarrow{f_i}\cdot\xleftarrow{g_i}\cdot$ be a cospan with a pullback span $\cdot\xleftarrow{\tilde g_i}\cdot\xrightarrow{\tilde f_i}\cdot$, for $i=1,2$. Then $\cdot\xleftarrow{\tilde g_1\times\tilde g_2}\cdot\xrightarrow{\tilde f_1\times\tilde g_2}\cdot$ is a pullback span for the cospan $\cdot\xrightarrow{f_1\times f_2}\cdot\xleftarrow{g_1\times f_2}\cdot$.    
\end{Lemma}

\begin{proof} This is an elementary computation: assume $h=(h_1,h_2)$, $h'=(h'_1,h'_2)$ are parallel morphisms with $T(h_i)=S(f_i)$, $T(h'_i)=S(g_i)$, for $i=1,2$, such that
$$
(f_1\times f_2) h = (g_1\times g_2) h' ,
\quad\text{ i.e., }\quad
f_i h_i = g_i h'_i ,
\quad i=1,2 .
$$
Then there is are unique morphisms $k_1$, $k_2$ such that
$$
h_i = \tilde g_i k_i \quad \text{ and }\quad
h'_i = \tilde f_i k_i , \quad i=1,2,
\quad\text{ i.e., }\quad
h = (\tilde g_1\times\tilde g_2) k \quad \text{ and }\quad
h' = (\tilde f_1\times\tilde f_2) k,
$$ 
with $k=(k_1,k_2)$.
\end{proof}

\begin{Corollary} \label{cor:reduced-3-relation} Let $f$ be a $3$-ary relation that is $1$-reduced. 

(a) Then there are spans $s,t\in\cS^-$ and a $3$-ary relation $g$ that is $i$-reduced for $i=1,2,3$, such that
$$
f = (g *_2 s) *_ 3 t = (g *_3 t) *_ 2 s
= (g_1, (g_2,g_3)) *_2 (s_1\times t_1, s_2\times t_2).
$$

(b) If $g',s',t'$ form another factorization of $f$ as in (a), then there exist isomorphisms $\alpha,\beta$ such that
$$
g' = (g *_2 \sigma_1(\alpha)) *_3 \sigma_1(\beta)
=  (g *_3 \sigma_1(\beta)) *_2 \sigma_1(\alpha)
= (g_1, (g_2,g_3)) *_2 \sigma_1(\alpha\times\beta)
,
$$$$
s' = \sigma_1(\alpha^{-1}) *_2 s ,
\quad
t' = \sigma_1(\beta^{-1}) *_2 t .
$$
\end{Corollary}

\begin{proof} (a) This follows from \Cref{lem:reduction-exists} applied twice, using \Cref{lem:reduction-preserved}, and using \Cref{lem:pullback-product} for the last asserted identity.

(b) This is \Cref{lem:reduction-unique} applied twice.
\end{proof}

\bibliographystyle{alpha}
\bibliography{highestweight}

\end{document}